\pdfoutput=1
\documentclass[onefignum,onetabnum]{siamart220329}

\usepackage[utf8]{inputenc}
\usepackage[T1]{fontenc}
\usepackage{enumerate, enumitem}
\usepackage{lmodern, microtype}
\usepackage{graphicx}
\usepackage{verbatim}
\usepackage{amsmath, amssymb, bm, esint, mathtools, xfrac}
\usepackage{bm, esint, mathtools, xfrac}
\usepackage{mathrsfs}
\usepackage[bb=dsserif]{mathalpha}
\usepackage{booktabs}
\setlength{\aboverulesep}{0pt}
\setlength{\belowrulesep}{0pt}
\usepackage{colortbl}
\usepackage{tabulary}
\usepackage{diagbox}
\usepackage{caption}
\usepackage{float}
\usepackage[english]{babel}

\usepackage{amsfonts}
\usepackage{epstopdf}
\usepackage{algorithmic}
\ifpdf
\DeclareGraphicsExtensions{.eps,.pdf,.png,.jpg}
\else
\DeclareGraphicsExtensions{.eps}
\fi


\newsiamremark{remark}{Remark}
\newsiamremark{hypothesis}{Hypothesis}
\crefname{hypothesis}{Hypothesis}{Hypotheses}
\newsiamthm{assum}{Assumption}
\newsiamthm{cor}{Corollary}
\newsiamthm{defi}{Definition}
\newsiamthm{prop}{Proposition}
\newsiamthm{thm}{Theorem}

\headers{FE discretization of steady, generalized Navier--Stokes equations}{J. Je\ss{}berger and A. Kaltenbach}

\title{\vspace*{-2mm}Finite element discretization of the steady, generalized Navier--Stokes equations with inhomogeneous Dirichlet boundary conditions
}

\author{Julius Je\ss{}berger\thanks{Department of Applied Mathematics, University of Freiburg, Ernst--Zermelo-Str. 1, D-79104 Freiburg, Germany 
		(\email{julius.jessberger@mathematik.uni-freiburg.de}).}
	\and Alex Kaltenbach\thanks{Institute of Mathematics, Technical University of Berlin, Straße des 17.~Juni 135, 10623, Berlin, Germany
		(\email{kaltenbach@math.tu-berlin.de}).}}

\usepackage{amsopn}

\ifpdf
\hypersetup{
	pdftitle={Finite element discretization of steady, generalized Navier--Stokes equations with inhomogeneous Dirichlet boundary conditions},
	pdfauthor={J. Je\ss{}berger and A. Kaltenbach}
}
\fi


\renewcommand{\phi}{\varphi}			
\renewcommand{\vec}[1]{\boldsymbol{#1}}	
\renewcommand{\tilde}[1]{\widetilde{#1}}

\newcommand{\N}{\mathbb{N}}				
\newcommand{\R}{\mathbb{R}}				


\newcommand{\abs}[1]{\left\lvert#1\right\rvert}	  	
\renewcommand{\d}{^{\ast}}							
\newcommand{\del}{\partial}							
\providecommand{\meantmp}[2]{#1\langle{#2}#1\rangle}
\providecommand{\mean}[1]{\meantmp{}{#1}}			
\newcommand{\norm}[1]{\left\lVert#1\right\rVert}  	
\newcommand{\supp}{\operatorname{supp}}				
\newcommand{\weakstarto}{\overset{*}{\rightharpoonup}}		
\newcommand{\weakto}{\rightharpoonup}						
\newcommand{\spann}{\operatorname{span}}				

\newcommand{\Bog}{\mathcal{B}}						
\renewcommand{\div}{\operatorname{div}}				

\newcommand{\D}[1]{\vec{D #1}}						
\newcommand{\tr}{\operatorname{tr}}					


\newcommand{\dx}{\, dx}




\begin{document}
	
	\maketitle
	\thispagestyle{empty}
	\begin{abstract}
		We propose a finite element discretization for the steady, generalized Navier--Stokes equations for fluids with shear-dependent viscosity, completed with inhomogeneous Dirichlet boundary conditions and an inhomogeneous divergence constraint. We establish (weak) convergence of discrete solutions  as well as a priori error estimates for the velocity vector field and the scalar kinematic pressure. Numerical experiments complement the theoretical findings.\vspace*{-1mm}
	\end{abstract}
	
	\begin{keywords}
		Generalized Newtonian fluids; finite element method; weak convergence; a priori~error estimates; inhomogeneous Dirichlet boundary condition; velocity; pressure.\vspace*{-1mm}
	\end{keywords}
	
	\begin{MSCcodes}
		35J60; 35Q35; 65N12; 65N15; 65N30; 76A05.\vspace*{-1mm}
	\end{MSCcodes}
	
	\section{\hspace{-0.25em}Introduction}
	
	\hspace{-0.25em}The steady motion of a homogeneous, incompressible~\mbox{generalized} Newtonian fluid can be modelled by the \textit{generalized Navier--Stokes equations}
	\begin{align} \label{eq:fem:main_problem1}
		- \textup{div}\, \vec{S}(\vec{Dv}) + \textup{div}\,(\vec{v}\otimes\vec{v}) + \nabla q = \vec{f} \quad \textrm{ in } \Omega\,.
	\end{align}
	Here, $\Omega\subseteq \mathbb{R}^d$, $d\in \{2,3\}$, denotes a bounded polyhedral Lipschitz domain, which is occupied by the fluid, and $\vec{f}\colon \Omega\to \mathbb{R}^d$ denotes an external body force. The \textit{velocity vector field} 
	$\vec{v}\colon \overline{\Omega}\to \mathbb{R}^d$ and the scalar \textit{kinematic pressure} $q\colon \Omega\to \mathbb{R}$~are~the~unknowns. Moreover,
	we assume that the viscosity can be expressed as a function of the~\textit{shear~rate}. Therefore, we consider an \textit{extra stress tensor} $\vec{S}\colon \mathbb{R}^{d\times d}\to \mathbb{R}_{\textup{sym}}^{d\times d}$~that~has~\textit{$(p,\delta)$-structure} (cf. Assumption \ref{assum:extra_stress}). While the particular case $p=2$ results in the well-known~Navier--Stokes equations for Newtonian fluids, the cases $p<2$ and $p>2$ correspond to \textit{shear-thinning} and \textit{shear-thickening} fluids, respectively.\enlargethispage{2mm}
	
	The system is completed by the \textit{inhomogeneous Dirichlet boundary condition}
	\begin{alignat}{2} 
		\vec{v} &= \vec{g}_2 &&\quad \textrm{ on } \del\Omega\,,\label{eq:fem:main_problem2}
		\intertext{
			and, for the sake of generality, the \textit{inhomogeneous divergence constraint}
		}
		\div \vec{v} &= g_1&& \quad \textrm{ in } \Omega\,.\label{eq:fem:main_problem3}
	\end{alignat}
	
	Several results have been established for finite element (FE) discretizations~of~the corresponding generalized Stokes system: in~\cite{BBDR12_fem}, error estimates are proved for homogeneous Dirichlet boundary conditions. Similar results have been established in~\cite{lan12_fem}~for~fluids with shear-rate- and pressure-dependent viscosity, $p < 2$, and mixed inhomogeneous boundary conditions.
	To the best of the authors' knowledge, FE discretizations for the system \eqref{eq:fem:main_problem1}--\eqref{eq:fem:main_problem3} have solely been investigated in the case of homogeneous Dirichlet boundary conditions as well as merely (weak) convergence results has been established. This was done in a fairly general context, e.g., for implicitly constituted fluids and unsteady flows thereof (cf.\ \cite{DKS13a,Tscherpel_phd}).
	In a recent series of articles, a local discontinuous Galerkin (LDG) scheme for the system \eqref{eq:fem:main_problem1}--\eqref{eq:fem:main_problem3} has been proposed in the case~$p > 2$.\linebreak Existence and (weak) convergence  results  have been derived (cf.\ \cite{KR23_ldg1}) as well as a priori error estimates in the case of homogeneous Dirichlet boundary~conditions~(cf.~\cite{KR23_ldg2, KR23_ldg3}).
	
	In this work, we provide a FE discretization for the fully inhomogeneous, steady system \eqref{eq:fem:main_problem1}--\eqref{eq:fem:main_problem3}. Then, for this FE discretization, we establish its well-posedness (i.e., existence of discrete solutions), stability (i.e., a priori energy bounds),  convergence (i.e., (weak) convergence towards a solution of an appropriate weak formulation of  the system \eqref{eq:fem:main_problem1}--\eqref{eq:fem:main_problem3}), as well as a priori error estimates.\enlargethispage{8mm}
	
	While most of our assumptions (like existence of a Fortin interpolation~\mbox{operator}) are rather uncritical, we require certain smallness constraints for the following~\mbox{reasons}:
	in the shear-thickening and in the homogeneous case, the corresponding operator is coercive and, thus, existence of weak solutions can be proved with the theory~of~pseudo-monotone operators (and, additionally, the Lipschitz truncation method~if~$p \leq \tfrac{3d}{d+2}$) (cf.\ \cite{r-mol-inhomo, lions-quel, fms2, dms}). Furthermore, the existence of weak solutions is only known under additional assumptions like smallness of the data $g_1$ and $\vec{g_2}$ (cf.\ \cite{Leray33, Hopf41, Galdi, KPR_2d3d, mikelic, JR21_inhom}). This transfers to the discrete situation: if $p \leq 2$, then we require smallness of the data $g_1$~and~$\vec{g_2}$, in order to show  the existence and (weak) convergence~of~discrete~solutions.
	In addition, because of quadratic terms coming from the convective term $ \textup{div}\,(\vec{v}\otimes\vec{v}) $,\linebreak the error estimate for the velocity requires smallness of the solution $\vec{v}$ in any case. To the best of the authors' knowledge, this corresponds to the state of the art regarding error estimates for the (much more intensively researched) classical Navier--Stokes~equations.
	
	\textit{This paper is structured as follows:} In Section \ref{sec:preliminaries}, we introduce the needed~notation, the finite element setting and precise formulations of continuous and~discrete~problem. Then, Section~\ref{sec:fem:ex_conv} is dedicated to the 
	well-posedness, stability and (weak) convergence of the discrete formulation.  In Section \ref{sec:error}, we derive a priori error estimates~for~the~velocity vector field and the kinematic pressure. In~Section~\ref{sec:fem:num_experiments},
	numerical experiments are carried out to complement the theoretical findings.
	
	\section{Preliminaries}\label{sec:preliminaries}

In this section, we provide the needed notation for the analysis and give precise definitions of the continuous problem and a finite~\mbox{element}~\mbox{discretization}.\enlargethispage{2mm}

\subsection{Notation}

We employ $c,C>0$ to denote generic constants,~that~may change from line to line, but do not depend on the crucial quantities. Moreover, we~write~$u\sim v$ if and only if there exist constants $c,C>0$ such that $c\, u \leq v\leq C\, u$.

For $k\in \N$ and $p\in [1,\infty]$, we employ the customary
Lebesgue spaces $(L^p(\Omega), \|\cdot\|_p) $ and Sobolev
spaces $(W^{k,p}(\Omega),\|\cdot\|_{k,p})$, where $\Omega
\subseteq \R^d$, $d\in \{2,3\}$, is a bounded,~polyhedral Lipschitz domain. The space $\smash{W^{1,p}_0(\Omega)}$
is defined as those functions from $W^{1,p}(\Omega)$ whose traces vanish on $\partial\Omega$. We equip
$\smash{W^{1,p}_0(\Omega)}$ with the norm $\smash{\|\nabla\cdot\|_p}$. 
The conjugate exponent is denoted by $p' = \tfrac{p}{p-1} \in [1, \infty]$, whereas $p\d$ is the critical Sobolev~exponent, i.e., if $p > d$, then $p\d = \infty$, while $p\d$ is used as a placeholder~for~any~finite~${q \in [1, \infty)}$~if~$p=d$.

We do not distinguish between function spaces for scalar,
vector-~or~\mbox{tensor-valued} functions. However, we denote
vector-valued functions by boldface letters~and~tensor-valued
functions by capital boldface letters. The standard scalar product
between~two vectors $\vec{a} =(a_1,\dots,a_d)^\top,\vec{b} =(b_1,\dots,b_d)^\top\in \mathbb{R}^d$ is denoted by 
$\vec{a} \cdot\vec{b}\coloneqq \sum_{i=1}^d{a_ib_i}$, while the
Frobenius scalar product between two tensors $\vec{A}=(A_{ij})_{i,j\in \{1,\dots,d\}},\vec{B}=(B_{ij})_{i,j\in \{1,\dots,d\}}\in \mathbb{R}^{d\times d }$ is denoted by
$\vec{A}: \vec{B}\coloneqq \sum_{i,j=1}^d{A_{ij}B_{ij}}$. For a tensor $\vec{A}\in \R^{d\times d}$, we denote its symmetric part by $\vec{A}^{\textup{sym}}\coloneqq \frac{1}{2}(\vec{A}+\vec{A}^\top)\in \R^{d\times d}_{\textup{sym}}\coloneqq \{\vec{A}\in \R^{d\times d}\mid \vec{A}=\vec{A}^\top\}$.
Moreover, for a (Lebesgue) measurable set $M\subseteq \mathbb{R}^d$  and (Lebesgue) measurable functions $u,v\in L^0(M)$, we employ the product $(u,v)_M \coloneqq \int_M u v\,\mathrm{d}x$, whenever the right-hand side is well-defined.
The mean value of an integrable function $u\in L^1(M)$ over a (Lebesgue) measurable set $M\subseteq \mathbb{R}^d$ is denoted by ${\mean{u}_M \coloneqq \smash{\fint_M u \,\dx}\coloneqq \smash{\frac 1 {|M|}\int_M u \,\dx}}$.

For $r \in (1, \infty)$, we abbreviate the function spaces\vspace*{-1mm}
\begin{align}
	\begin{aligned}
		X^r &\coloneqq  W^{1,r}(\Omega)\,, &&\quad
		V^r \coloneqq  W_0^{1,r}(\Omega)\,, \\
		Y^r &\coloneqq \smash{ L^{r'}(\Omega)}\,, &&\quad
		Q^r \coloneqq  \smash{L_0^{r'}(\Omega)} \coloneqq  \{ z \in \smash{L^{r'}(\Omega) }\mid \langle z \rangle_{\Omega} = 0 \}\,,
	\end{aligned}
\end{align}
as well as  $X\coloneqq X^p$, $V\coloneqq V^p$, $Y\coloneqq Y^p$, and $Q\coloneqq Q^p$.

%

\subsection{N-functions and Orlicz spaces}

A real convex function $\psi \colon \R^{\geq 0} \to \R^{\geq 0}$ is called \textit{N-function} if it holds that  $\psi(0)=0$,
$\psi(t)>0$~for~all~${t>0}$, $\lim_{t\rightarrow0} \psi(t)/t=0$, and
$\lim_{t\rightarrow\infty} \psi(t)/t=\infty$. 
A Carath\'eodory function $\psi \colon \Omega \times \R^{\geq 0} \to \R^{\geq 0}$ such that $\psi(x,\cdot)$ is an N-function~for~a.e.~${x \in \Omega}$ is called \textit{generalized N-function}.
We~define the (convex) conjugate N-function $\psi^*\colon \hspace*{-0.1em}\R^{\geq 0} \hspace*{-0.1em}\to\hspace*{-0.1em} \R^{\geq 0}$ by
${\psi^*(t)\hspace*{-0.1em} \coloneqq \hspace*{-0.1em}\sup_{s \geq 0} (st - \psi(s))}$~for~all~${t \hspace*{-0.1em}\geq \hspace*{-0.1em} 0}$,~which satisfies
$(\psi^*)' =  (\psi')^{-1}$. A N-function
$\psi$ satisfies the \textit{$\Delta_2$-condition} (in short,
$\psi \in \Delta_2$), if there exists
$K> 2$ such that ${\psi(2\,t) \leq K\,\psi(t)}$ for all
$t \geq 0$. We denote the smallest such constant by
$\Delta_2(\psi) > 0$. We need the following version of the \textit{$\epsilon$-Young inequality}: for every
${\epsilon> 0}$,~there~exists a constant $c_\epsilon>0 $,
depending only on $\Delta_2(\psi),\Delta_2( \psi ^*)<\infty$, such
that~for~every~${s,t\geq 0}$,~it~holds that
\begin{align} \label{eq:fem:orliczyoung}
	s\,t&\leq c_\epsilon \,\psi^*(s)+ \epsilon \, \psi(t)\,.
\end{align}

For $p \in (1,\infty)$ and~$\delta\geq 0$, we define a \textit{special N-function} $\phi \coloneqq  \phi_{p,\delta}\colon\smash{\R^{\geq 0}\to \R^{\geq 0}}$~by
\begin{align} \label{eq:fem:nfunction} 
	\phi(t)\coloneqq  \int _0^t \phi'(s)\, \mathrm{d}s\,, \quad\text{where}\quad
	\phi'(t) \coloneqq  (\delta +t)^{p-2} t\,,\quad\textup{ for all }t\geq 0\,.
\end{align}
The special N-function  satisfies the $\Delta_2$-condition with $\Delta_2(\phi) \leq c\, 2^{\max \{2,p\}}$. 
In addition, the conjugate function $\phi^*$ satisfies the
$\Delta_2$-condition with $\Delta_2(\phi^*) \leq c\,2^{\max \{2,p'\}}$~and ${\phi^*(t) \sim (\delta^{p-1} + t)^{p'-2} t^2}$ uniformly in $t,\delta\geq 0$. 

An important tool in our analysis are shifted N-functions
$\{\psi_a\}_{\smash{a \geq 0}}$  (cf.~\cite{DK08_adaptive,GNSE_DR07}). For a given N-function $\psi\colon\hspace*{-0.1em}\R^{\geq 0}\hspace*{-0.1em}\to\hspace*{-0.1em} \R^{\geq 0}$, we define \textit{shifted N-functions} ${\psi_a\colon\hspace*{-0.1em}\R^{\geq 0}\hspace*{-0.1em}\to\hspace*{-0.1em} \R^{\geq 0}}$,~${a \hspace*{-0.1em}\geq\hspace*{-0.1em} 0}$, for every $a\hspace*{-0.1em}\geq \hspace*{-0.1em} 0$, via
\begin{align*} \label{eq:fem:phi_shifted}
	\psi_a(t)\coloneqq  \int _0^t \psi_a'(s)\,  \mathrm{d}s\,,\quad\text{where }\quad
	\psi'_a(t)\coloneqq \psi'(a+t)\frac {t}{a+t}\,,\quad\textup{ for all }t\geq 0\,.
\end{align*}

Let $M\subseteq \mathbb{R}^d$, $d\in \mathbb{N}$, be a (Lebesgue)~measurable set. For a (Lebesgue)~\mbox{measurable} function $u\hspace*{-0.1em}\in\hspace*{-0.1em} L^0(M)$, the corresponding \textit{modular} is defined  by
$ \rho_{\psi,M}(u)\hspace*{-0.1em}\coloneqq \hspace*{-0.1em}\int_M
{\psi(\abs{u})\,\textup{d}x} $~if $\psi$ is an N-function and
$ \rho_{\psi,M}(u)\hspace*{-0.1em}\coloneqq \hspace*{-0.1em} \int_M{
	\psi(\cdot,\abs{u})\,\textup{d}x} $, if $\psi$
is a generalized~N-function.~Then, for a (generalized)
N-function~$\psi$, we denote by ${L^{\psi}(M)\coloneqq \{u\in L^0(M)\mid
	\rho_{\psi,M}(u)<\infty\}}$, the \textit{(generalized) Orlicz space}. Equipped with
the induced  \textit{Luxembourg norm},  i.e., $\smash{\norm {u}_{\psi,M}\hspace*{-0.1em}\coloneqq\hspace*{-0.1em}  \inf \{\lambda >0\mid \rho_{\psi,M}(u/\lambda)\leq 1\}}$, the (generalized) Orlicz space
$L^\psi(M)$~is~a Banach \hspace*{-0.1mm}space.  \!If $\psi$ \hspace*{-0.1mm}is \hspace*{-0.1mm}a \hspace*{-0.1mm}generalized
N-function, \hspace*{-0.1mm}then, \hspace*{-0.1mm}for \hspace*{-0.1mm}every \hspace*{-0.1mm}$u\!\in \! L^{\psi}(M)$~\hspace*{-0.1mm}and~\hspace*{-0.1mm}${v\! \in\!   L^{\psi^*}(M)}$, there holds the  \textit{generalized Hölder inequality}
\begin{align}\label{eq:gen_hoelder}
	(u,v)_M\leq 2\,\|u\|_{\psi,M}\|v\|_{\psi^*,M}\,.
\end{align}

\subsection{The extra stress tensor and related functions} In this subsection, we specify the precise assumptions on the extra stress tensor and important consequences.

\begin{assum}[extra stress tensor] \label{assum:extra_stress}
	We assume that the extra stress tensor $ \vec{S}\hspace*{-0.1em}\colon\hspace*{-0.15em} \R^{d \times d} \hspace*{-0.15em}\to\hspace*{-0.15em} \R^{d \times d}_{\textup{sym}} $ belongs to $C^0(\R^{d \times d},\R^{d \times d}_{\textup{sym}} ) $, 
	satisfies $\vec{S} (\vec{A}) \hspace*{-0.15em}=\hspace*{-0.15em} \vec{S} (\vec{A}^{\textup{sym}})$~for~all~${\vec{A}\hspace*{-0.15em}\in \hspace*{-0.15em}\R^{d \times d}}$, and $\vec{S}(\mathbf 0)=\mathbf 0$. 
	
	Moreover, we assume that the tensor $\vec{S} = (S_{ij})_{i,j=1,\dots,d}$ has \textup{$(p,\delta)$-structure}, i.e.,
	for some $p \in (1, \infty)$, $ \delta\in [0,\infty)$, and the
	N-function $\phi=\phi_{p,\delta}$ defined in \eqref{eq:fem:nfunction},~there exist constants $C_0, C_1 > 0$ such that for every $\vec{A},\vec{B} \in \R^{d\times d}$, it holds that
	\begin{align}
		({\vec{S}}(\vec{A}) - {\vec{S}}(\vec{B})) : (\vec{A}-\vec{B}) &\ge C_0 \,\phi_{\vert \vec{A}^{\textup{sym}}\vert}(\vert\vec{A}^{\textup{sym}} -
		\vec{B}^{\textup{sym}}\vert) \,,\label{assum:extra_stress.1}
		\\
		\vert \vec{S}(\vec{A}) - \vec{S}(\vec{B})\vert  &\le C_1 \,
		\phi'_{\abs{\vec{A}^{\textup{sym}}}}(\abs{\vec{A}^{\textup{sym}} -
			\vec{B}^{\textup{sym}}})\,.\label{assum:extra_stress.2}
	\end{align}
	The~constants $C_0,C_1>0$ and $p\in (1,\infty)$ are called the \textup{characteristics} of $\vec{S}$.
\end{assum}


Closely related to the extra stress tensor $\vec{S}\colon \R^{d \times d} \to \R^{d \times d}_{\textup{sym}} $ with $(p,\delta)$-structure is the non-linear function $\vec{F} 
\colon\R^{d\times d}\to \R^{d\times d}_{\textup{sym}}$, for every $\vec{A}\in \mathbb{R}^{d\times d}$  defined by 
\begin{align}
	\begin{aligned}
		\vec{F}(\vec{A})&\coloneqq (\delta+\vert \vec{A}^{\textup{sym}}\vert)^{\smash{\frac{p-2}{2}}}\vec{A}^{\textup{sym}}\,. 
	\end{aligned}
	\label{eq:def_F}
\end{align}
The connections between
$\vec{S},\vec{F}\colon \R^{d \times d} \to \R^{d\times d}_{\textup{sym}}$ and
$\phi_a,(\phi_a)^*\colon \R^{\ge 0}\hspace{-0.05em}\to\hspace{-0.05em} \R^{\ge 0}$,~${a\ge  0}$, are best presented
by the following result (cf.~\cite{DE08_fractional,GNSE_DR07,DKRT13_ldg}).

\begin{prop}
	\label{lem:growth_SF}
	Let $\vec{S}$ satisfy Assumption~\ref{assum:extra_stress}, let $\phi$ be defined in \eqref{eq:fem:nfunction}, and let $\vec{F}$ be defined in~\eqref{eq:def_F}. Then, uniformly with respect to 
	$\vec{A}, \vec{B} \in \R^{d \times d}$, we have that
	\begin{align}\label{eq:growth_S}
		\begin{aligned}
			(\vec{S}(\vec{A}) - \vec{S}(\vec{B}))
			:(\vec{A}-\vec{B} ) &\sim  \abs{ \vec{F}(\vec{A}) - \vec{F}(\vec{B})}^2
			\\
			&\sim \phi_{\vert \vec{A}^{\textup{sym}}\vert }(\vert \vec{A}^{\textup{sym}}
			- \vec{B}^{\textup{sym}}\vert )
			\\
			&\sim(\phi_{\vert\vec{A}^{\textup{sym}} \vert})^*(\vert\vec{S}(\vec{A} ) - \vec{S}(\vec{B} )\vert)\,.
		\end{aligned}
	\end{align}
	The constants in \eqref{eq:growth_S} depend only on the characteristics of ${\vec{S}}$.
\end{prop}

The following results can be found in~\cite{DK08_adaptive,GNSE_DR07}.

\begin{lemma}[change of shift]\label{lem:shift_change}
	Let $\phi$ be defined in \eqref{eq:fem:nfunction} and let $\vec{F}$ be defined in \eqref{eq:def_F}. Then,
	for every $\epsilon>0$, there exists $c_\epsilon\geq 1$ (depending only
	on~$\epsilon>0$ and the characteristics of $\phi$) such that for every $\vec{A},\vec{B}\in\smash{\R^{d \times d}_{\textup{sym}}}$ and $t\geq 0$, it holds that
	\begin{align}\label{lem:shift_change.1}
		\smash{\phi_{\vert \vec{B}\vert}(t)}&\leq \smash{c_\epsilon\, \phi_{\vert\vec{A}\vert}(t)
			+\epsilon\, \vert \vec{F}(\vec{B}) - \vec{F}(\vec{A})\vert ^2\,,}
		\\
		\smash{(\phi_{\vert\vec{B}\vert})^*(t)}&\leq \smash{c_\epsilon\, (\phi_{\vert\vec{A}\vert})^*(t)
			+\epsilon\,\vert\vec{F}(\vec{B}) - \vec{F}(\vec{A})\vert^2}\,.\label{lem:shift_change.2}
	\end{align}
\end{lemma}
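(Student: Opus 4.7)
The starting point is the explicit equivalence
\begin{align*}
\phi_a(t) \sim (\delta + a + t)^{p-2}\,t^2, \qquad (\phi_a)^*(t) \sim \bigl((\delta+a)^{p-1} + t\bigr)^{p'-2}\,t^2,
\end{align*}
valid uniformly for all $a,t\geq 0$, which follows from the definitions \eqref{eq:fem:nfunction} and the relation $\phi'_a(t) = \phi'(a+t)\,t/(a+t)$ together with the $\Delta_2$-estimates for $\phi$ and $\phi^*$. These reduce the two inequalities to pointwise estimates between explicit elementary expressions in the scalars $a \coloneqq |\vec{A}|$, $b \coloneqq |\vec{B}|$, $\delta$ and $t$.

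\textbf{Step 1: Reduction to a scalar inequality.} Since $\vec{A},\vec{B}\in\R^{d\times d}_{\textup{sym}}$, we have $\vec{A}^{\textup{sym}}=\vec{A}$ and $\vec{B}^{\textup{sym}}=\vec{B}$, so by Proposition \ref{lem:growth_SF} one has $|\vec{F}(\vec{A})-\vec{F}(\vec{B})|^2\sim \phi_a(|\vec{A}-\vec{B}|)$. Using the reverse triangle inequality $|a-b|\leq |\vec{A}-\vec{B}|$ and the monotonicity of $\phi_a(\cdot)$, it therefore suffices to prove the purely scalar inequalities
\begin{align*}
\phi_b(t) \leq c_\epsilon\,\phi_a(t) + \epsilon\,\phi_a(|a-b|), \qquad (\phi_b)^*(t)\leq c_\epsilon\,(\phi_a)^*(t)+\epsilon\,\phi_a(|a-b|)
\end{align*}
for all $a,b,t\geq 0$.

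\textbf{Step 2: Dichotomy on the shifts.} I would split according to whether the shifts are comparable or not. If $b\leq 2(\delta+a)$, then $\delta+a+t\sim \delta+b+t$, and the first estimate follows from the explicit equivalence with a constant depending only on $p$ (no $\epsilon$ needed). If instead $b > 2(\delta+a)$, then $|a-b| \geq b/2$, so $\phi_a(|a-b|)\gtrsim \phi_a(b) \sim (\delta+a+b)^{p-2} b^2$. One then has to bound $\phi_b(t)\sim (\delta+b+t)^{p-2}t^2$ by $c_\epsilon(\delta+a+t)^{p-2}t^2+\epsilon (\delta+b)^{p-2}b^2$, which after a further case split in the size of $t$ relative to $b$ follows by the Young inequality \eqref{eq:fem:orliczyoung} applied to the N-function $\phi$ (or $\phi^*$ in the conjugate case). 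The conjugate estimate is handled identically, with $(\phi_a)^*(t)\sim ((\delta+a)^{p-1}+t)^{p'-2}t^2$ replacing $\phi_a$ and Young applied to $\phi^*$.

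\textbf{Main obstacle.} The delicate point is the unified treatment of $p\geq 2$ and $1<p<2$, since the monotonicity in the shift of the prefactor $(\delta+a)^{p-2}$ flips sign at $p=2$. Consequently, in Case 2 above the natural bounds on $(\delta+a+t)^{p-2}$ vs.\ $(\delta+b+t)^{p-2}$ go in opposite directions depending on $p$, and one must carefully match the regime of $t$ (small, comparable to, or large with respect to the larger shift $b$) with the exponent regime to ensure that the residual term can be absorbed as $\epsilon\,\phi_a(|a-b|)$ via \eqref{eq:fem:orliczyoung}. Once these case distinctions are laid out, each sub-estimate reduces to a power-function Young inequality, and the dependence of $c_\epsilon$ on $\epsilon$ is precisely that coming from \eqref{eq:fem:orliczyoung} with the characteristics of $\phi$.
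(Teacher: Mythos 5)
The paper provides no proof of this lemma; it simply cites \cite{DK08_adaptive,GNSE_DR07}. So your attempt has to stand on its own, and there is a genuine gap in it.

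The flaw is in Step~2, Case~1. You claim that if $b\leq 2(\delta+a)$ then $\delta+a+t\sim\delta+b+t$, and hence $\phi_b(t)\leq c\,\phi_a(t)$ without any $\epsilon$-term. The hypothesis $b\leq 2(\delta+a)$ is one-sided: it gives $\delta+b+t\leq 3(\delta+a+t)$, but it gives no lower bound on $\delta+b+t$ in terms of $\delta+a+t$. Concretely, take $a$ huge and $b=0$; then $b\leq 2(\delta+a)$ holds, yet $\delta+a+t$ is not comparable to $\delta+b+t$. In this regime, for $1<p<2$ one has $(\delta+b+t)^{p-2}\gg(\delta+a+t)^{p-2}$, so $\phi_b(t)\leq c\,\phi_a(t)$ is simply false with a universal $c$; the excess must be absorbed into the term $\epsilon\,\phi_a(|a-b|)\sim\epsilon\,(\delta+a)^{p-2}|a-b|^2\approx\epsilon\,(\delta+a)^p$, which your plan never does because you have filed this subcase under the ``easy'' branch. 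Your ``Main obstacle'' paragraph correctly notices that the sign of $p-2$ matters, but locates the difficulty in Case~2 ($b\gg\delta+a$), where for $p<2$ the estimate is actually trivial because $(\delta+b+t)^{p-2}<(\delta+a+t)^{p-2}$; the real trouble for $p<2$ sits inside your Case~1.

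The fix is to replace the one-sided split by a symmetric one: distinguish the regime of \emph{two-sided comparability}, say $\tfrac12(\delta+a)\leq\delta+b\leq 2(\delta+a)$, from its complement. In the comparable regime the equivalence $\delta+a+t\sim\delta+b+t$ is legitimate and gives $\phi_b\sim\phi_a$ uniformly. In the complement, $|a-b|\gtrsim\delta+\max\{a,b\}$, hence $\phi_a(|a-b|)\gtrsim(\delta+\max\{a,b\})^{p}$ is large enough to absorb, after an $\epsilon$-Young step applied to the mixed term; one then treats $b\gg a$ (harmful only for $p\geq 2$) and $a\gg b$ (harmful only for $p<2$) separately, possibly with a secondary split in $t$. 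With that corrected dichotomy and the explicit formulas you wrote down, the argument goes through; the rest of your plan (reduction to scalars via the reverse triangle inequality and $\phi_a(|a-b|)\lesssim|\vec{F}(\vec{A})-\vec{F}(\vec{B})|^2$, and the analogous treatment of $(\phi_a)^*$) is sound.
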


\subsection{Continuous weak formulation}

In this subsection, we give a precise (weak) problem formulation of the  (strong) problem formulation \eqref{eq:fem:main_problem1}--\eqref{eq:fem:main_problem3}.
If $p < \tfrac{3d}{d+2}$, the theory of pseudo-monotone operators (cf. \cite{Zeidler2B}) is no longer applicable, since  we, then,
have to consider test functions with higher regularity than the standard~weak~re\-gu\-larity. For every $p\in (1,\infty)$, we define the coefficient $s \coloneqq  s(p) \coloneqq  \max \{ p, (\tfrac{p\d}{2})' \}$. Then, a first (weak) problem formulation of the  (strong) problem formulation \eqref{eq:fem:main_problem1}--\eqref{eq:fem:main_problem3}~is~given~via:

\textsc{Problem (\hypertarget{Q}{Q}).} Given $\vec{f}\in V\d$, $ g_1  \in L^s(\Omega)$, and $ \vec{g}_2\in W^{\smash{1-\frac{1}{s}},s}(\partial\Omega)$, 
find $(\vec{v}, q)^\top \in X \times Q^s$ such that 
\begin{align}
	(\vec{S}(\D{v}), \D{z})_\Omega - (\vec{v} \otimes \vec{v}, \nabla \vec{z})_\Omega  - (q, \div \vec{z})_\Omega  &= (\vec{f}, \vec{z})_\Omega   &&\text{ for all } \vec{z} \in V^s\,, \label{eq:fem:q_main} \\
	\div \vec{v} &= g_1  &&\text{ in } L^s(\Omega) \,, \label{eq:fem:q_div} \\
	\tr \vec{v} &= \vec{g}_2  &&\text{ in } W^{\smash{1-\frac{1}{s}},s}(\del\Omega)\,. \label{eq:fem:q_bdry}
\end{align}

For the moment, let us assume that there exists a vector field $\vec{g} \in X^s$ (an explicit construction is given in  Lemma \ref{lem:fem:gh}), which solves the system
\begin{align}\begin{aligned} \label{eq:fem:div_eq}
		\div \vec{g} &= g_1&&\quad\text{ in }L^s(\Omega)\,, \\
		\tr \vec{g} &= \vec{g}_2&&\quad\text{ in }W^{\smash{1-\frac{1}{s}},s}(\partial\Omega)\,.
\end{aligned}\end{align}
The well-posedness of the system \eqref{eq:fem:div_eq} will be established  in Lemma~\ref{lem:fem:gh}.~Then,~setting $$\vec{u} \coloneqq  \vec{v} - \vec{g} \in V_0\,,$$
where $V_0 \coloneqq  V_0^p$ and $V_0^r \coloneqq  \{ \vec{z} \in V^r \mid\div \vec{z} = 0 \text{ a.e.\ in }\Omega\}$ for all $r\in [1,\infty)$,
an inf-sup argument yields the following equivalent formulation (cf.\ \cite[Cor.~A.3]{Tscherpel_phd}):

\textsc{Problem (\hypertarget{P}{P}). }
Given $\vec{f}\in V\d$, $ g_1  \in L^s(\Omega)$, and $ \vec{g}_2\in W^{\smash{1-\frac{1}{s}},s}(\partial\Omega)$,~find~$\vec{u} \in V_0$ such that
\begin{align*}
	(\vec{S}(\D{u}+\D{g}), \D{z})_\Omega - ((\vec{u}+\vec{g}) \otimes (\vec{u}+\vec{g}), \nabla \vec{z})_\Omega  = (\vec{f}, \vec{z})_\Omega\quad \text{ for all }\vec{z} \in V_0^s\,.
\end{align*}

\subsection{Triangulations and finite element spaces}

In this subsection, we specify our assumptions on the triangulations and the finite element spaces employed in the discretization of Problem (\hyperlink{Q}{Q}) (or equivalently of Problem (\hyperlink{P}{P})).

\begin{assum}[triangulation] \label{assum:fem:geometry} 
	We assume that
	$\{\mathcal{T}_h\}_{h>0}$ is a family of conforming
	triangulations of $\overline{\Omega}\subseteq \mathbb{R}^d$,
	$d\in \{2,3\}$, cf.\ \cite{BS08},  consisting of
	\mbox{$d$-dimensional} simplices $K$.
	The parameter $h>0$ refers to the \textup{maximal mesh-size} of $\mathcal{T}_h$, i.e., if we define $h_K\coloneqq \textup{diam}(K)$ for all $K\in \mathcal{T}_h$, then ${h\coloneqq \max_{K\in \mathcal{T}_h}{h_K}}$.
	For a simplex $K \in \mathcal{T}_h$,
	we denote the \textup{supremum of diameters of inscribed balls} by $\rho_K>0$. We assume that there is a constant $\omega_0>0$,~\mbox{independent}~of~$h>0$, such that ${h_K}{\rho_K^{-1}}\le
	\omega_0$~for~all~${K \in \mathcal{T}_h}$. The smallest such constant is called the \textup{chunkiness} of $\{\mathcal{T}_h\}_{h>0}$. 
\end{assum}

For every $K\hspace*{-0.1em} \in\hspace*{-0.1em} \mathcal{T}_h$, the \textit{element patch} is defined by ${\omega_K\hspace*{-0.1em}\coloneqq\hspace*{-0.1em} \{K'\in \mathcal{T}_h\mid \partial K'\cap \partial K\hspace*{-0.1em}\neq \hspace*{-0.1em}\emptyset\}}$.

\begin{defi}[finite element spaces] \label{defi:fem:spaces}
	The space of polynomials of degree at most $r\in \mathbb{N}\cup\{0\}$ on each simplex is denoted as $\mathbb{P}^r(\mathcal{T}_h)$. We assume that the finite element spaces $X_h \subseteq \mathbb{P}^m(\mathcal{T}_h)$ and $Y_h \subseteq \mathbb{P}^k(\mathcal{T}_h)$ are conforming, i.e.,~${X_h \subseteq X}$~and~${Y_h \subseteq Y}$.
	We define $V_h \coloneqq  X_h \cap V$ and $Q_h \coloneqq  Y_h \cap Q$.
\end{defi}

The following assumption becomes relevant in the construction of a discrete (divergence-corrected) Lipschitz truncation  (cf.~\cite{DKS13a} or \cite[Ass. 2.20]{Tscherpel_phd}).
\begin{assum}[locally supported basis of $Y_h$] \label{assum:fem:localbasis}
	We assume that $Y_h$ has a locally supported basis $Y_h = \spann \{q_h^1, \dots, q_h^l \}$ such that $q^i_h|_K \neq 0$ implies  that $\supp q_h^i \subseteq \omega_K$ for all $i \in \{1, \dots, l\}$ and $K \in \mathcal{T}_h$.
\end{assum}

\begin{assum}[projection for the discrete pressure space] \label{assum:fem:projection_y}
	We suppose that  $\mathbb{R}\subseteq Y_h$ 
	and that there exists a linear projection operator $\Pi_h^Y \colon Y \to Y_h$ such that for every $z \in Y$ and $K \in \mathcal{T}_h$, it holds that
	\begin{align*}
		\langle \vert \Pi_h^Y\! z\vert \rangle_K\leq c\, \langle \vert z\vert \rangle_{\omega_K}\,.
	\end{align*}
\end{assum}

The local stability of the projection operator $\Pi_h^Y$ implies the convergence of the images as $h\to 0$.
\begin{lemma}[convergence of pressure projection] \label{lem:fem:appr_y}
	Assumption~\ref{assum:fem:projection_y} implies that $\Pi_h^Y\! z \to z$ in $ L^r(\Omega)$ $(h \to 0)$ for all $z \in L^r(\Omega)$ and $r\in [1,\infty)$.
\end{lemma}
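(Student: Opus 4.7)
The plan is the classical three-step strategy for quasi-interpolation operators: local $L^r$-stability, approximation for smooth functions using that constants are reproduced, and conclusion by density.

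\textbf{Step 1 (local and global $L^r$-stability).} I first upgrade the $L^1$-type bound in Assumption~\ref{assum:fem:projection_y} to a local $L^r$-bound. Since $\Pi_h^Y\! z\vert_K\in\mathbb{P}^k(K)$ lies in a finite-dimensional space, the standard scaling/inverse estimate for polynomials on shape-regular simplices (whose constants depend only on $k$, $d$ and the chunkiness $\omega_0$) gives
\begin{align*}
\|\Pi_h^Y\! z\|_{L^r(K)}\le c\,\vert K\vert^{1/r}\,\langle \vert\Pi_h^Y\! z\vert\rangle_K\,.
\end{align*}
Combined with the hypothesis and H\"older's inequality this yields the local stability $\|\Pi_h^Y\! z\|_{L^r(K)}\le c\,\|z\|_{L^r(\omega_K)}$. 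Taking $r$-th powers, summing over $K\in\mathcal{T}_h$ and using that the patches $\{\omega_K\}_{K\in\mathcal{T}_h}$ have overlap bounded in terms of the chunkiness, I obtain the global bound $\|\Pi_h^Y\! z\|_r\le c\,\|z\|_r$ for every $z\in Y$. For $r<p'$ the operator $\Pi_h^Y$ is a priori defined only on $Y=L^{p'}(\Omega)$, but since $\Omega$ is bounded, $L^{p'}(\Omega)$ is dense in $L^r(\Omega)$, so this stability lets me extend $\Pi_h^Y$ uniquely to a bounded operator on $L^r(\Omega)$.

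\textbf{Step 2 (approximation for smooth functions).} Fix $z\in C^\infty(\overline{\Omega})$ and $K\in\mathcal{T}_h$, and let $\overline{z}_K\coloneqq\langle z\rangle_{\omega_K}\in\mathbb{R}\subseteq Y_h$. Since $\Pi_h^Y$ is a projection onto $Y_h$, it reproduces constants, hence $\Pi_h^Y\! z-z=\Pi_h^Y(z-\overline{z}_K)-(z-\overline{z}_K)$ on $K$. The triangle inequality, the local stability of Step 1, and the Poincar\'e inequality on the patch $\omega_K$ (whose Poincar\'e constant scales like $h_K$ with a prefactor controlled by the chunkiness) yield
\begin{align*}
\|\Pi_h^Y\! z-z\|_{L^r(K)}\le c\,\|z-\overline{z}_K\|_{L^r(\omega_K)}\le c\,h\,\|\nabla z\|_{L^r(\omega_K)}\,.
\end{align*}
Taking $r$-th powers and summing, again using the finite overlap of the patches, gives $\|\Pi_h^Y\! z-z\|_r\le c\,h\,\|\nabla z\|_r\to 0$ as $h\to 0$.

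\textbf{Step 3 (density).} For arbitrary $z\in L^r(\Omega)$ and $\varepsilon>0$, pick $z_\varepsilon\in C^\infty(\overline{\Omega})$ with $\|z-z_\varepsilon\|_r<\varepsilon$. By the triangle inequality and the global stability from Step 1,
\begin{align*}
\|\Pi_h^Y\! z-z\|_r\le \|\Pi_h^Y(z-z_\varepsilon)\|_r+\|\Pi_h^Y\! z_\varepsilon-z_\varepsilon\|_r+\|z_\varepsilon-z\|_r\le (c+1)\varepsilon+\|\Pi_h^Y\! z_\varepsilon-z_\varepsilon\|_r\,.
\end{align*}
Step 2 sends the last term to zero as $h\to 0$, so $\limsup_{h\to 0}\|\Pi_h^Y\! z-z\|_r\le (c+1)\varepsilon$. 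Letting $\varepsilon\to 0$ concludes the proof.

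The main obstacle I expect is bookkeeping in Step 1: to convert the $\langle\vert\cdot\vert\rangle_K$-bound into an $L^r$-bound and then sum it globally with $h$-independent constants, I need to invoke both the inverse estimate on $\mathbb{P}^k(K)$ and the finite-overlap property of the patches $\omega_K$, both of which are uniform only thanks to the chunkiness assumption of Assumption~\ref{assum:fem:geometry}. The extension to $r<p'$ is a harmless density argument once the stability is in hand.
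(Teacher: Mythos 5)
Your proof is correct and follows the same density-plus-stability-plus-approximation strategy as the paper's. The only difference is that you derive the local $L^r$-stability and first-order approximation estimate directly from Assumption~\ref{assum:fem:projection_y} (via the polynomial norm equivalence, the finite overlap of patches, reproduction of constants, and a patch-wise Poincar\'e inequality), whereas the paper imports these two properties wholesale by citing \cite[Thms.~4.5,~4.6]{DR07_interpolation} and then concludes with the identical $\varepsilon$-density argument.
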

\begin{proof}
	Let $\varepsilon>0$ be fixed, but arbitrary. Moreover, let
	$z_\varepsilon \in W^{1,p}(\Omega)$ be such that $\| z- z_\varepsilon\|_{p} \leq \varepsilon$, which is possible due to the density of smooth functions in $L^p(\Omega)$. Thus, due to \cite[Thms. 4.5, 4.6]{DR07_interpolation}, for every $\varepsilon>0$ and $h\in (0,1]$, we have  that
	\begin{align*}
		\| z - \Pi_h^Y\! z\|_r
		&\leq \| z - z_\varepsilon\|_r + \| z_\varepsilon- \Pi_h^Y\! z_\varepsilon\|_r + \|\Pi_h^Y (z_\varepsilon - z)\|_r
		\\&\leq  (1+c)\,\varepsilon+c \,h \,\|z_\varepsilon\|_{1,r}\,.
	\end{align*}
	In other words, for arbitrary $\varepsilon>0$, it holds that $\limsup_{h \to 0}{\| z - \Pi_h^Y\! z\|_p}\leq \varepsilon$, i.e., $\Pi_h^Y\! z \to z$ in $L^r(\Omega)$ $(h \to 0)$.
\end{proof}

\begin{lemma} \label{lem:fem:comparable_qh}
	Let $z \in Q$ and $\psi$ be an N-function. Then, the approximation by functions in $Q_h$ and $Y_h$ is comparable, i.e., for every $z_h \in Y_h$, we have that
	\begin{align*}
		\rho_{\psi,\Omega }(z - (z_h - \mean{z_h}_{\Omega})) \leq 2\, \rho_{\psi,\Omega }(z - z_h)\,.
	\end{align*}
\end{lemma}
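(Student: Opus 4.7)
The cornerstone of the proof is the algebraic identity
\begin{equation*}
z - (z_h - \langle z_h\rangle_\Omega) \;=\; (z - z_h) - \langle z - z_h\rangle_\Omega\,,
\end{equation*}
which follows immediately from $\langle z\rangle_\Omega = 0$ (since $z \in Q$) and linearity of the mean: one has $\langle z_h\rangle_\Omega = \langle z_h - z\rangle_\Omega = -\langle z-z_h\rangle_\Omega$. Setting $g\coloneqq z - z_h\in L^\psi(\Omega)$, the claim thus reduces to the purely analytic ``subtract-the-mean'' modular estimate
\begin{equation*}
\rho_{\psi,\Omega}(g - \langle g\rangle_\Omega) \;\leq\; 2\,\rho_{\psi,\Omega}(g)\,,
\end{equation*}
which no longer involves the discrete spaces.

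To prove this inequality, I would combine two ingredients. First, by Jensen's inequality applied to the convex N-function $\psi$,
\begin{equation*}
\psi(|\langle g\rangle_\Omega|)\;\leq\;\psi(\langle|g|\rangle_\Omega)\;\leq\;\langle\psi(|g|)\rangle_\Omega\,,
\end{equation*}
so that upon multiplying by $|\Omega|$ one gets $\rho_{\psi,\Omega}(\langle g\rangle_\Omega)\leq \rho_{\psi,\Omega}(g)$; that is, the modular contribution of the constant mean is absorbed into $\rho_{\psi,\Omega}(g)$. Second, starting from the pointwise triangle inequality $|g - \langle g\rangle_\Omega|\leq |g|+|\langle g\rangle_\Omega|$ and monotonicity of $\psi$, a convexity-based splitting yields the additive modular bound $\rho_{\psi,\Omega}(g - \langle g\rangle_\Omega)\leq \rho_{\psi,\Omega}(g)+\rho_{\psi,\Omega}(\langle g\rangle_\Omega)$, and combining with the Jensen estimate gives the factor of $2$.

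The delicate point is carrying out the second step with unit constants on the right-hand side, since a naive application of $\psi(a+b)\leq \tfrac12\psi(2a)+\tfrac12\psi(2b)$ would introduce the $\Delta_2$-constant of $\psi$, which is strictly larger than $2$ for any non-linear N-function. To bypass this loss, I would use the integral representation $g(x)-\langle g\rangle_\Omega = \fint_\Omega (g(x)-g(y))\,dy$ and apply Jensen in the $y$-variable to obtain $\psi(|g(x)-\langle g\rangle_\Omega|) \leq \fint_\Omega \psi(|g(x)-g(y)|)\,dy$. Integrating in $x$ and using the symmetric Fubini structure of the resulting double integral, together with the cancellation identity $\int_\Omega g\,dx = |\Omega|\langle g\rangle_\Omega$, should allow one to recover the clean constant $2$; this symmetric double-integral manipulation is the only nontrivial step in the argument.
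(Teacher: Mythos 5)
Your algebraic reduction $z - (z_h - \langle z_h\rangle_\Omega) = (z - z_h) - \langle z - z_h\rangle_\Omega$ is correct and is exactly what the paper's proof rests on, and your Jensen bound $\rho_{\psi,\Omega}(\langle g\rangle_\Omega)\leq\rho_{\psi,\Omega}(g)$ is fine. You are also right that the remaining step, a modular triangle inequality with unit constants, is the delicate point. Unfortunately, the double-integral remedy you sketch does not produce it. After applying Jensen in the $y$-variable you are left with $\fint_\Omega\int_\Omega\psi(|g(x)-g(y)|)\,\mathrm{d}x\,\mathrm{d}y$, and to bound this by $2\rho_{\psi,\Omega}(g)$ you would still need $\psi(|g(x)-g(y)|)\leq\psi(|g(x)|)+\psi(|g(y)|)$, which is exactly the subadditivity that every N-function lacks (a convex function with $\psi(0)=0$ is \emph{super}additive on $[0,\infty)$, so this estimate fails pointwise). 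No symmetric cancellation recovers it: with $\psi(t)=t^4$, $\Omega=(0,1)$, and $g$ equal to $7$ on $(0,0.1)$ and $-3$ on $(0.1,1)$, one has $\langle g\rangle_\Omega=-2$, hence $\rho_{\psi,\Omega}(g-\langle g\rangle_\Omega)=0.1\cdot 9^4+0.9\cdot 1^4=657$ while $\rho_{\psi,\Omega}(g)=0.1\cdot 7^4+0.9\cdot 3^4=313$, a ratio $657/313\approx 2.10>2$. Taking $z\coloneqq g-\langle g\rangle_\Omega\in Q$ and the constant $z_h\coloneqq -\langle g\rangle_\Omega\in\mathbb{R}\subseteq Y_h$ casts this as an instance of the lemma, so the constant $2$ is in fact not attainable for a general N-function.

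The paper's own one-line proof already concedes this implicitly: it lists the $\Delta_2$-condition among its ingredients, and the modular quasi-triangle inequality that convexity together with $\Delta_2$ actually delivers, namely $\rho_{\psi,\Omega}(a+b)\leq\tfrac{\Delta_2(\psi)}{2}\bigl(\rho_{\psi,\Omega}(a)+\rho_{\psi,\Omega}(b)\bigr)$, combined with Jensen and $\langle z\rangle_\Omega=0$, yields only $\rho_{\psi,\Omega}(z-(z_h-\langle z_h\rangle_\Omega))\leq\Delta_2(\psi)\,\rho_{\psi,\Omega}(z-z_h)$, not the stated factor $2$. Since in the places where the lemma is invoked (the pressure term in Theorems~\ref{prop:fem:error} and~\ref{prop:fem:error_pres}) a generic constant depending on $\Delta_2$ of the relevant N-function is all that is required, nothing downstream breaks; you should simply prove the lemma with a constant $c(\Delta_2(\psi))$ via the ``naive'' route you already outlined, rather than chasing the unattainable $2$.
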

\begin{proof}
	Due to the convexity of $\psi$, the $\Delta_2$-condition satisfied by $\psi$,  $\mean{z}_{\Omega} = 0$, and Jensen's inequality, we have that
	\begin{align*}
		\rho_{\psi,\Omega }(z - (z_h - \mean{z_h}_{\Omega}))
		&\leq \rho_{\psi,\Omega }(z - z_h) + \rho_{\psi,\Omega }(\mean{z - z_h}_{\Omega})\\&
		\leq 2\,\rho_{\psi,\Omega }(z - z_h) \,.
	\end{align*}
\end{proof}

\begin{assum}[projections for the discrete velocity space] \label{assum:fem:projection_x}
	We suppose that $\mathbb{P}^1(\mathcal{T}_h) \subset X_h$ and that there exists
	a linear projection operator $\Pi_h^X \colon X \to X_h$ with the following properties:\enlargethispage{6mm}
	\begin{itemize}
		\item[(i)] \textup{Local $W^{1,1}$-stability:} for every $\vec{z} \in X$ and $K \in \mathcal{T}_h$, it holds that
		\begin{align*}
			\langle \vert\Pi_h^X \vec{z}\vert \rangle_K  \leq c\,\langle \vert\vec{z}\vert \rangle_{\omega_K} + c\, \langle h_K \vert \nabla \vec{z}\vert  \rangle_{\omega_K} \,.
		\end{align*}
		\item[(ii)] \textup{Preservation of zero boundary values:} It holds that $\Pi_h^X(V) \subseteq V_h$,
		\item[(iii)] \textup{Preservation of divergence in the $Y_h^*$-sense:} For every $\vec{z} \in X$ and  $z_h \in Y_h$, it holds that\vspace*{-2.5mm}
		\begin{align*}
			(\div \vec{z}, z_h)_\Omega = (\div \Pi_h^X \vec{z}, z_h)_\Omega \,.
		\end{align*}
	\end{itemize}
\end{assum}

Analogously to Lemma~\ref{lem:fem:appr_y}, we have that
\begin{cor}[convergence of velocity projection] \label{cor:fem:appr_x}
	Assumption~\ref{assum:fem:projection_x} implies that $\Pi_h^X \vec{z} \to \vec{z}$ 
	in $W^{1,r}(\Omega)$ $(h \to 0)$ for all $\vec{z} \in W^{1,r}(\Omega)$ and $r\in [1,\infty)$.
\end{cor}

Next, we present a short list of common mixed finite element spaces $\{X_h\}_{h>0}$ and $\{Y_h\}_{h>0}$ with projectors $\{\Pi_h^X\}_{h>0}$ and $\{\Pi_h^Y\}_{h>0}$ on regular triangulations $\{\mathcal{T}_h\}_{h>0}$ satisfying both  Assumption~\ref{assum:fem:projection_y}~and~Assumption~\ref{assum:fem:projection_x}; see also \cite{Tscherpel_phd} or \cite{BBF13}.

\begin{remark}\label{FEM.Q}
	{\itshape
		The following discrete spaces and projectors satisfy Assumption~\ref{assum:fem:projection_y}:
		\begin{description}[noitemsep,topsep=2pt,leftmargin=!,labelwidth=\widthof{(iii)},font=\normalfont\itshape]
			\item[(i)] If $Y_h= \mathbb{P}^l(\mathcal{T}_h)$ for some $l\ge 0$, then $\Pi_h^Y$ can be chosen as (local) $L^2$-projection operator or, more generally, a Cl\'ement type quasi-interpolation operator.
			
			\item[(ii)] If  $Y_h=\mathbb{P}^l(\mathcal{T}_h)$  for  some  $l\ge 1$,  then  $\Pi_h^Y$  can  be  chosen  as  a  Cl\'ement  type  quasi-interpolation operator.
			
	\end{description}}
\end{remark}

\begin{remark}\label{FEM.V}
	{\itshape
		The following discrete spaces  and projectors satisfy Assumption~\ref{assum:fem:projection_x}:
		\begin{description}[noitemsep,topsep=2pt,leftmargin=!,labelwidth=\widthof{(iii)},font=\normalfont\itshape]
			\item[(i)] The \textup{MINI element} for $d\in \{2,3\}$, i.e., $X_h=\mathbb{P}^1_c(\mathcal{T}_h)^d\bigoplus\mathbb{B}(\mathcal{T}_h)^d$, where $\mathbb{B}(\mathcal{T}_h)$ is the bubble function space, and $Y_h=\mathbb{P}^1_c(\mathcal{T}_h)$, introduced in \cite{ABF84} for $d=2$; see also \cite[Chap.\ II.4.1]{GR86} and \cite[Sec.\ 8.4.2, 8.7.1]{BBF13}. A proof of Assumption \ref{assum:fem:projection_x}
			is given in \cite[Appx.\ A.1]{BBDR12_fem}; see also \cite[Lem.\ 4.5]{GL01}~or~\mbox{\cite[p.~990]{DKS13}}.
			
			\item[(ii)] The \textup{Taylor--Hood element} for $d\in\{2,3\}$, i.e., $X_h=\mathbb{P}^2_c(\mathcal{T}_h)^d$ and $Y_h=\mathbb{P}^1_c(\mathcal{T}_h)$, introduced in \cite{TH73} for $d=2$; see  also \cite[Chap.\ II.4.2]{GR86}, and its generalizations; see, e.g., \cite[Sec.\ 8.8.2]{BBF13}. A proof
			of Assumption \ref{assum:fem:projection_x} is given in \cite[Thm.~3.1,~3.2]{GS03}.
			
			\item[(iii)] The \textup{conforming Crouzeix--Raviart element} for $d\hspace*{-0.1em}=\hspace*{-0.1em}2$, i.e., $X_h=\mathbb{P}^2_c(\mathcal{T}_h)^2\bigoplus\mathbb{B}(\mathcal{T}_h)^2$ and ${Y_h=\mathbb{P}^1(\mathcal{T}_h)}$, introduced in \cite{CR73}; see also \cite[Ex.\ 8.6.1]{BBF13}. The operator $\Pi_h^X$ satisfying Assumption~\ref{assum:fem:projection_x}~(i) is given in \cite[p.\ 49]{CR73} and it can be shown to satisfy Assumption \ref{assum:fem:projection_x} (ii); see, e.g., \cite[Thm.\ 3.3]{GS03}.
			
			\item[(iv)] The \hspace*{-0.1mm}\textup{first \hspace*{-0.1mm}order \hspace*{-0.1mm}Bernardi--Raugel \hspace*{-0.1mm}element} \hspace*{-0.1mm}for \hspace*{-0.1mm}$d\hspace*{-0.175em} \in \hspace*{-0.175em}  \{2,3\}$, \hspace*{-0.1mm}i.e.,\hspace*{-0.15em} 
			$V_h \hspace*{-0.175em}= \hspace*{-0.175em}\mathbb{P}^1_c(\mathcal{T}_h)^d\hspace*{-0.175em}\bigoplus\hspace*{-0.175em}\mathbb{B}_{\tiny \mathscr{F}}(\mathcal{T}_h)^d\hspace*{-0.175em}$, where $\mathbb{B}_{\tiny \mathscr{F}}(\mathcal{T}_h)$ is the facet bubble function space, and $Y_h=\mathbb{P}^0(\mathcal{T}_h)$, introduced in \cite[Sec. II]{BR85}. For $d=2$ is often  referred to as \textup{reduced $\mathbb{P}^2$-$\mathbb{P}^0$-element} or as \textup{2D SMALL element}; see,  e.g.,  \cite[Rem.\ 8.4.2]{BBF13} and \cite[Chap.\ II.2.1]{GR86}.  The operator $\Pi_h^X$ satisfying Assumption \ref{assum:fem:projection_x} is given in
			\cite[Sec.\ II.]{BR85}.
			
			\item[(v)] The  {\textup{second  order  Bernardi--Raugel  element}}  for  $d=3$,  introduced  in  \cite[Sec.~III]{BR85};  see  also \mbox{\cite[Ex.\ 8.7.2]{BBF13}}  and  \cite[Chap.\  II.2.3]{GR86}.  The  operator  $\Pi_h^X$  satisfying  Assumption  \ref{assum:fem:projection_x}  is  given in \mbox{\cite[Sec. III.3]{BR85}}; see also \cite{Tscherpel_phd}.
	\end{description}}
\end{remark}

The following discrete inf-sup stability is crucial for establishing~a~discrete~pressure:
\begin{lemma}[discrete inf-sup condition] \label{lem:fem:discrete-inf-sup}
	Assumption~\ref{assum:fem:projection_x} implies that for every $q_h \in Q_h$, 
	it holds that
	\begin{align*}
		\|q_h\|_{p',\Omega} \leq c \sup_{\vec{z}_h \in X_h\colon \|\vec{z}_h\|_{1,p,\Omega} \leq 1}{ (q_h,\div \vec{z}_h)_\Omega}\,,
	\end{align*}
	where the constant $c>0$ depends on only on $m$, $k$, $p$, and $\omega_0$, and $\Omega$. 
\end{lemma}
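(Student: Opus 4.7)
The plan is to derive the discrete inf-sup condition from the continuous inf-sup condition on $(V,Q)$ by means of a Fortin-type argument based on the projection operator $\Pi_h^X$ from Assumption~\ref{assum:fem:projection_x}. More precisely, given $q_h \in Q_h$, I would replace the continuous supremizer $\vec{z} \in V$ by $\Pi_h^X \vec{z} \in V_h \subseteq X_h$, up to a constant depending only on the data in the lemma.

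The first ingredient is the classical surjectivity of $\div\colon V \to Q^*$ (equivalently, the existence of a Bogovski\u{\i} right inverse on the bounded Lipschitz domain $\Omega$), which provides, for every $q_h \in Q_h \subseteq Q$, a function $\vec{z} \in V$ with $\|\nabla \vec{z}\|_{p,\Omega} \leq 1$ and $\|q_h\|_{p',\Omega} \leq c\,(q_h, \div \vec{z})_\Omega$.

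The second (and main) ingredient is the global $W^{1,p}$-stability of $\Pi_h^X$. Since $\mathbb{P}^1(\mathcal{T}_h)\subseteq X_h$ and $\Pi_h^X$ is a linear projection, it acts as the identity on (vector-valued) constants. For $\vec{z} \in W^{1,p}(\Omega)$ and $K \in \mathcal{T}_h$, setting $\vec{P}_K \coloneqq \langle \vec{z}\rangle_{\omega_K}$, one has $\nabla \Pi_h^X \vec{z}=\nabla \Pi_h^X(\vec{z}-\vec{P}_K)$ on $K$. A standard inverse estimate combined with the equivalence of $L^1$- and $L^\infty$-norms on the finite-dimensional space $\mathbb{P}^m(\mathcal{T}_h)|_K$ (with constants depending only on $m$ and the chunkiness $\omega_0$) yields
\[
\|\nabla \Pi_h^X(\vec{z}-\vec{P}_K)\|_{p,K} \leq c\, h_K^{d/p-1}\, \langle |\Pi_h^X(\vec{z}-\vec{P}_K)|\rangle_K.
\]
Applying Assumption~\ref{assum:fem:projection_x}(i), H\"older's inequality, and the Poincar\'e--Wirtinger inequality on the patch $\omega_K$ (whose constant is controlled by the chunkiness) then gives $\|\nabla \Pi_h^X \vec{z}\|_{p,K} \leq c\, \|\nabla \vec{z}\|_{p,\omega_K}$. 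Taking $p$-th powers, summing over $K \in \mathcal{T}_h$, and using the finite overlap of the patches $\{\omega_K\}_{K\in \mathcal{T}_h}$ yields the global bound $\|\nabla \Pi_h^X \vec{z}\|_{p,\Omega} \leq c\, \|\nabla \vec{z}\|_{p,\Omega}$.

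The third step is the Fortin trick: for the $\vec{z}$ from the first step, set $\vec{z}_h \coloneqq \Pi_h^X \vec{z}$. By Assumption~\ref{assum:fem:projection_x}(ii), $\vec{z}_h \in V_h \subseteq X_h$, while the $W^{1,p}$-stability from the second step (together with the Poincar\'e inequality on $V$) yields $\|\vec{z}_h\|_{1,p,\Omega} \leq c$. Since $q_h \in Q_h \subseteq Y_h$, Assumption~\ref{assum:fem:projection_x}(iii) gives $(q_h, \div \vec{z})_\Omega = (q_h, \div \vec{z}_h)_\Omega$, so that the continuous bound from the first step transfers to the discrete supremum over $X_h$, which proves the claim. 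The main obstacle is the global $W^{1,p}$-stability: once it is established, the Fortin argument is routine.
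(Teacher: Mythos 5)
Your proposal is correct and is essentially the argument the paper invokes: the paper's proof is a single citation to \cite[Lems.~4.1, 4.3]{BBDR12_fem}, where Lemma~4.1 establishes precisely the global $W^{1,p}$-stability of $\Pi_h^X$ from the local $W^{1,1}$-stability (via the same inverse-estimate/Poincar\'e-on-patches chain you describe), and Lemma~4.3 then runs the Fortin argument combining the continuous inf-sup (Bogovski\u{\i}), Assumption~\ref{assum:fem:projection_x}(ii), and Assumption~\ref{assum:fem:projection_x}(iii). You have simply spelled out in detail what the paper delegates to the reference.
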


\begin{proof}
	See \cite[Lems. 4.1, 4.3]{BBDR12_fem}. 
\end{proof}

\subsection{Discrete weak formulation} \label{sub:fem:discretization}

Classically, skew-symmetry plays a crucial role in the analysis of the convective term. This property seems out of reach in the case of a non-zero divergence constraint on the velocity. Nevertheless, we may preserve certain cancellation properties. Therefore, we employ discretizations ${b,\widetilde{b}\colon (X)^2\times X^s \to \mathbb{R}}$, commonly referred to as \textit{Temam's modifications},~for~every~${(\vec{u},\vec{v},\vec{w})^\top\in (X)^2\times X^s}$, (cf. \cite{Temam84, KR23_ldg1}) defined by 
\begin{align}\label{def:b}
	\begin{aligned}
		b(\vec{u},\vec{v},\vec{w}) 
		&\coloneqq  \tilde{b}(\vec{u},\vec{v},\vec{w}) + \tfrac{1}{2} (g_1 \vec{u}, \vec{w})_\Omega \\
		&\coloneqq  \tfrac{1}{2} (\vec{w}\otimes\vec{u}, \nabla \vec{v} + g_1 \vec{I})_\Omega - \tfrac{1}{2} (\vec{v}\otimes\vec{u}, \nabla \vec{w})_\Omega\,.
	\end{aligned}
\end{align}
\begin{remark}
	Traditionally, i.e., in the context of a prescribed homogeneous divergence, the trilinear form $\widetilde{b}\colon (X)^2\times X^s \to \mathbb{R}$ is referred to as Temam's modification. In~\cite{KR23_ldg1}, this is extended to the case of inhomogeneous divergence: subtracting the term $\tfrac{1}{2} (g_1 \vec{u}, \vec{w})_\Omega$ yields consistency with the convective term in the convective formulation. Since the conservative formulation as in~\eqref{eq:fem:main_problem1} seems to be more physically reasonable, we chose the above version (with a different sign) in order to be consistent with~that~version. The numerical analysis as well as the corresponding experimental results are comparable for both variants. 
\end{remark}

\begin{lemma} \label{lem:fem:admissible}
	Let $p \geq \smash{\tfrac{2d}{d+1}}$. Then, Temam's modifications ${b,\widetilde{b}\colon (X)^2\times X^s \to \mathbb{R}}$ are well-defined and bounded. Moreover, $b\colon (X)^2\times X^s \to \mathbb{R}$
	is consistent, i.e., for every $\vec{v} \hspace*{-0.1em}\in\hspace*{-0.1em} W^{1,p}(\Omega)$ with $\div \vec{v} \hspace*{-0.1em}=\hspace*{-0.1em} g_1$ and ${\vec{z}\hspace*{-0.1em} \in \hspace*{-0.1em}W_0^{1,s}(\Omega)}$,~it~holds~that
	\begin{align} \label{eq:fem:consistency}
		b(\vec{v}, \vec{v}, \vec{z}) = - (\vec{v}\otimes\vec{v}, \nabla\vec{z})_\Omega\,.
	\end{align}
	Moreover, for every $\vec{v} \in W^{1,p}(\Omega)$ and $\vec{z} \in W^{1,s}(\Omega)$, it holds that
	\begin{align} \label{eq:fem:skew}
		b(\vec{v}, \vec{z}, \vec{z}) = \tfrac{1}{2} (g_1 \vec{v}, \vec{z})_\Omega\,.
	\end{align}
\end{lemma}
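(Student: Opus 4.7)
The plan is to address the three assertions in sequence, using only H\"older/Sobolev book-keeping plus a single integration by parts; no deep input is required.

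\emph{Boundedness.} I would verify that the three integrals $(\vec{w}\otimes\vec{u},\nabla\vec{v})_\Omega$, $(g_1\vec{u},\vec{w})_\Omega$ and $(\vec{v}\otimes\vec{u},\nabla\vec{w})_\Omega$ appearing in $b$ and $\widetilde b$ are controlled on $X\times X\times X^s$ by combining the Sobolev embeddings $X\hookrightarrow L^{p^*}(\Omega)$ and $X^s\hookrightarrow L^{s^*}(\Omega)$ with the generalised H\"older inequality. The definition $s=\max\{p,(p^*/2)'\}$ is precisely calibrated so that $\tfrac{2}{p^*}+\tfrac{1}{s}\le 1$, which handles $(\vec{v}\otimes\vec{u},\nabla\vec{w})_\Omega$, and $\tfrac{1}{s^*}+\tfrac{1}{p^*}+\tfrac{1}{p}\le 1$, which handles $(\vec{w}\otimes\vec{u},\nabla\vec{v})_\Omega$ and, together with $g_1\in L^s(\Omega)$, also $(g_1\vec{u},\vec{w})_\Omega$. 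The hypothesis $p\ge\tfrac{2d}{d+1}$ ensures $(p^*/2)'\le d$, so that all relevant Sobolev exponents are finite and in the admissible range.

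\emph{Consistency.} For $\vec{v}\in W^{1,p}(\Omega)$ with $\div\vec{v}=g_1$ and $\vec{z}\in W^{1,s}_0(\Omega)$, using $(\vec{z}\otimes\vec{v},g_1\vec{I})_\Omega=(g_1\vec{v},\vec{z})_\Omega$, I would unfold
\[
  b(\vec{v},\vec{v},\vec{z}) = \tfrac{1}{2}(\vec{z}\otimes\vec{v},\nabla\vec{v})_\Omega + \tfrac{1}{2}(g_1\vec{v},\vec{z})_\Omega - \tfrac{1}{2}(\vec{v}\otimes\vec{v},\nabla\vec{z})_\Omega
\]
and then integrate by parts the first term. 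The pointwise identity $\partial_j(z_iv_iv_j) = (\partial_jz_i)v_iv_j+z_i(\partial_jv_i)v_j+z_iv_i(\partial_jv_j)$, combined with the vanishing boundary trace of $\vec{z}$ and $\partial_jv_j=g_1$, yields
\[
  (\vec{z}\otimes\vec{v},\nabla\vec{v})_\Omega = -(\vec{v}\otimes\vec{v},\nabla\vec{z})_\Omega - (g_1\vec{v},\vec{z})_\Omega\,.
\]
Substituting, the two $g_1$-contributions cancel and \eqref{eq:fem:consistency} follows. The integration by parts at this regularity is justified by approximating $\vec{v}$ and $\vec{z}$ by smooth functions and passing to the limit using the H\"older bounds from the previous step.

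\emph{Skew-symmetry and main obstacle.} Identity \eqref{eq:fem:skew} will be immediate from the definition: for the triple $(\vec{v},\vec{z},\vec{z})$, the two trilinear terms $\tfrac{1}{2}(\vec{z}\otimes\vec{v},\nabla\vec{z})_\Omega$ cancel exactly, and only the diagonal term $\tfrac{1}{2}(\vec{z}\otimes\vec{v},g_1\vec{I})_\Omega=\tfrac{1}{2}(g_1\vec{v},\vec{z})_\Omega$ survives. Overall, there is no substantive obstacle; the only delicate point is the exponent book-keeping in the boundedness step, which is exactly what dictates the choice of $s$ and the lower bound $p\ge\tfrac{2d}{d+1}$. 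Once that is arranged, \eqref{eq:fem:consistency} is a one-line integration by parts with cancellation and \eqref{eq:fem:skew} is purely algebraic.
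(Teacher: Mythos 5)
Your proposal is correct and follows essentially the same route as the paper's (very brief) proof: H\"older's inequality combined with Sobolev embeddings for the boundedness of $b$ and $\widetilde{b}$, an integration by parts using the vanishing trace of $\vec{z}$ and $\div\vec{v}=g_1$ for the consistency identity \eqref{eq:fem:consistency}, and a direct cancellation from the definition for \eqref{eq:fem:skew}. Your exposition is slightly more explicit than the paper's, which only records the single exponent condition $\tfrac{1}{p}+\tfrac{1}{p^*}+\tfrac{1}{s^*}\le 1$, whereas you additionally verify $\tfrac{2}{p^*}+\tfrac{1}{s}\le 1$ for the term $(\vec{v}\otimes\vec{u},\nabla\vec{w})_\Omega$ and note that $p\ge\tfrac{2d}{d+1}$ is equivalent to $(p^*/2)'\le d$; this matches the spirit of the paper's argument and fills in exactly the book-keeping it leaves implicit.
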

\begin{proof}
	Since for $p \geq \smash{\tfrac{2d}{d+1}}$, we have that $\tfrac{1}{p} + \tfrac{1}{p\d} + \frac{1}{s\d} \leq 1$, 
	well-definedness and
	boundedness follow from Hölder's inequality and Sobolev's embedding theorem. Then,
	consistency follows from integration-by-parts and the vanishing traces of~test~functions. The last assertion \eqref{eq:fem:skew} is a direct consequence of the definition \eqref{def:b}.
\end{proof}

Given \eqref{def:b}, we consider the following discrete counterpart of Problem (\hyperlink{Q}{Q}):

\textsc{Problem} (\hypertarget{Qh}{Q$_h$}). 
Given $\vec{f}\hspace*{-0.15em}\in\hspace*{-0.15em}V\d$, $g_1\hspace*{-0.15em}\in\hspace*{-0.15em} L^{s'}(\Omega)$, $\vec{g}_2\hspace*{-0.15em}\in\hspace*{-0.15em} W^{1-\frac{1}{s},s}(\partial\Omega)$,~and~${\vec{g}_b \hspace*{-0.15em}\in\hspace*{-0.15em} W^{1,s}(\Omega)}$ with $\vec{g}_b=\vec{g}_2$ a.e.\ in $\partial\Omega$, 
find $(\vec{v}_h, q_h)^\top \in X_h \times Q_h$ such that
\begin{align}
	(\vec{S}(\D{v}_h), \D{z}_h)_\Omega + b(\vec{v}_h, \vec{v}_h,\vec{z}_h) - (q_h, \div \vec{z}_h)_\Omega  &= (\vec{f}, \vec{z}_h)_\Omega   &&\textrm{ for all } \vec{z}_h \in V_h\,, \hspace*{-2.5mm}\label{eq:fem:qh_main} \\
	(\div \vec{v}_h, z_h) _\Omega &= (g_1, z_h)_\Omega   && \textrm{ for all } z_h \in Y_h\,,\hspace*{-2.5mm} \label{eq:fem:qh_div} \\
	\vec{v}_h &= \smash{\vec{g}_2^h} && \textrm{ in } \tr X_h\,, \label{eq:fem:qh_bdry}
\end{align}
where $\smash{\vec{g}_2^h}\coloneqq  \Pi_h^X \vec{g}_b \in X_h$.

\begin{remark}
	The properties of the Fortin interpolation operator $\Pi_h^X\colon X\to X_h$ (cf. Assumption \ref{assum:fem:projection_x}) yield that the Dirichlet boundary condition \eqref{eq:fem:qh_bdry} does not depend on the choice of the trace lift $\vec{g}_b \in W^{1,s}(\Omega)$. The fact that $\Pi_h^X\colon X\to X_h$ (cf. Assumption \ref{assum:fem:projection_x}(iii)) preserves the divergence in the $Y_h^*$-sense implies the discrete compatibility of $g_1\in L^p(\Omega)$ and $\vec{g}_2^h\in X_h$, i.e., for every $z_h\in Y_h$, it holds that
	\begin{align*}
		(\div \vec{g}_2^h, z_h) _\Omega = (g_1, z_h)_\Omega   \,.
	\end{align*} 
\end{remark}

In order to establish the existence of discrete solutions to Problem (\hyperlink{Qh}{Q$_h$}),~an~equivalent~hydromechanical~formulation --\textit{``hiding''} the pressure-- will prove to be  advantageous. As a preparation, we construct a \textit{discrete lift} $\vec{g}_h \in X_h$ of the discrete Dirichlet boundary condition $\smash{\vec{g}_2^h}\in X_h$, i.e., the discrete divergence constraint \eqref{eq:fem:qh_div}  and the discrete Dirichlet boundary condition \eqref{eq:fem:qh_bdry}.
\begin{lemma} \label{lem:fem:gh}
	There exists $\vec{g}_h \in X_h$ such that
	\begin{align}\label{lem:fem:gh.1}
		\begin{aligned}
			(\div \vec{g}_h, z_h)_\Omega &= (g_1, z_h)_\Omega \quad &&\textrm{ for all } z_h \in Y_h\,, \\
			\vec{g}_h &= \smash{\vec{g}_2^h}&&\textrm{ in } \tr X_h\,.
		\end{aligned}
	\end{align}
	In addition, for  $\vec{g}\coloneqq  \vec{g}_b + \Bog_\Omega (g_1 - \div \vec{g}_b)\in W^{1,s}(\Omega)$, where $\Bog_\Omega\colon L^s_0(\Omega)\to W^{1,s}_0(\Omega)$ denotes the \textit{Bogovski\u{\i} operator}, it holds \eqref{eq:fem:div_eq} and that
	\begin{align}\label{lem:fem:gh.2}
		\vec{g}_h \to \vec{g} \quad \text{ in }W^{1,s}(\Omega)\quad (h\to 0)\,.
	\end{align}
\end{lemma}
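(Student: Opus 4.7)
The plan is to build $\vec{g}_h$ as the Fortin interpolant of a suitable continuous lift $\vec{g}$, and then to read off all three claims from the properties of $\Pi_h^X$ listed in Assumption~\ref{assum:fem:projection_x}. First, I would verify that $g_1 - \div \vec{g}_b \in L^s_0(\Omega)$, so that the Bogovski\u{\i} operator is applicable. Since $\vec{g}_b \in W^{1,s}(\Omega)$ with $\tr\vec{g}_b = \vec{g}_2$, the divergence theorem gives $\int_\Omega \div \vec{g}_b\,dx = \int_{\partial\Omega} \vec{g}_2\cdot\vec{n}\,ds$. Combined with the necessary compatibility condition $\int_\Omega g_1\,dx = \int_{\partial\Omega} \vec{g}_2\cdot\vec{n}\,ds$ on the data of Problem~(\hyperlink{Q}{Q}) (implicitly assumed in the statement), this yields $\mean{g_1-\div \vec{g}_b}_\Omega = 0$. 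Hence $\Bog_\Omega(g_1-\div \vec{g}_b)\in W^{1,s}_0(\Omega)$ with divergence $g_1-\div \vec{g}_b$, so that $\vec{g} := \vec{g}_b + \Bog_\Omega(g_1-\div \vec{g}_b) \in W^{1,s}(\Omega)$ satisfies $\div \vec{g}=g_1$ a.e.\ in $\Omega$ and $\tr \vec{g}=\vec{g}_2$ in $W^{1-1/s,s}(\partial\Omega)$, i.e., \eqref{eq:fem:div_eq}.

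Next, I would set $\vec{g}_h := \Pi_h^X \vec{g} \in X_h$ and check \eqref{lem:fem:gh.1}. For the discrete divergence identity, Assumption~\ref{assum:fem:projection_x}(iii) gives, for every $z_h \in Y_h$,
\begin{align*}
(\div \vec{g}_h, z_h)_\Omega = (\div \Pi_h^X \vec{g}, z_h)_\Omega = (\div \vec{g}, z_h)_\Omega = (g_1, z_h)_\Omega.
\end{align*}
For the trace identity, the key observation is that $\vec{g} - \vec{g}_b = \Bog_\Omega(g_1-\div \vec{g}_b) \in V$ by construction, so Assumption~\ref{assum:fem:projection_x}(ii) ensures $\Pi_h^X(\vec{g}-\vec{g}_b) \in V_h$; consequently, $\tr \vec{g}_h = \tr \Pi_h^X \vec{g}_b = \tr \vec{g}_2^h$ in $\tr X_h$. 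As a pleasant byproduct, this argument also shows that the discrete boundary datum $\vec{g}_2^h$ does not depend on the particular extension $\vec{g}_b$ of $\vec{g}_2$, as already remarked after the definition of Problem~(\hyperlink{Qh}{Q$_h$}).

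Finally, the convergence assertion \eqref{lem:fem:gh.2} is a direct application of Corollary~\ref{cor:fem:appr_x} to $\vec{g} \in W^{1,s}(\Omega)$. I do not expect any genuine obstacle in this proof: the only subtle point is making explicit that the compatibility condition on $(g_1,\vec{g}_2)$ is part of the standing hypotheses on the data, after which every step reduces to a direct invocation of the Bogovski\u{\i} operator and of properties (ii)--(iii) of the Fortin interpolant $\Pi_h^X$.
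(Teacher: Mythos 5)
Your proof is correct, and it rests on exactly the same ingredients as the paper's: the Bogovski\u{\i} operator to produce the continuous lift $\vec{g}$ solving \eqref{eq:fem:div_eq}, and properties (ii)--(iii) of the Fortin interpolant $\Pi_h^X$ together with Corollary~\ref{cor:fem:appr_x}. The only (mild) difference is the decomposition used to define $\vec{g}_h$: you set $\vec{g}_h \coloneqq \Pi_h^X\vec{g} = \Pi_h^X\vec{g}_b + \Pi_h^X\Bog_\Omega(g_1 - \div\vec{g}_b)$, whereas the paper sets $\vec{g}_h \coloneqq \vec{g}_2^h + \Pi_h^X\Bog_\Omega(g_1 - \div\vec{g}_2^h)$, i.e., it feeds the Bogovski\u{\i} operator the \emph{discretized} boundary lift $\vec{g}_2^h = \Pi_h^X\vec{g}_b$ rather than $\vec{g}_b$ itself. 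These two discrete fields are in general different, but both satisfy \eqref{lem:fem:gh.1} by the same reasoning (linearity of $\Pi_h^X$, preservation of zero traces via (ii), and preservation of the discrete divergence via (iii)). Your choice buys a slightly cleaner convergence proof: $\vec{g}_h \to \vec{g}$ in $W^{1,s}(\Omega)$ is then literally one application of Corollary~\ref{cor:fem:appr_x}, whereas the paper's construction requires an additional (easy, but unstated) triangle-inequality step to handle the $h$-dependence inside the Bogovski\u{\i} argument and the global $W^{1,s}$-stability of $\Pi_h^X$. Your explicit remark that $g_1 - \div\vec{g}_b \in L^s_0(\Omega)$ rests on the compatibility condition $\int_\Omega g_1\,\mathrm{d}x = \int_{\partial\Omega}\vec{g}_2\cdot\vec{n}\,\mathrm{d}s$ is a welcome clarification that the paper leaves implicit.
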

\begin{proof}
	Inasmuch as
	$g_1 - \div \smash{\vec{g}_2^h}\in L^s_0(\Omega)$, by the properties of the Bogovski\u{\i}~operator, the vector field
	\begin{align*}
		\vec{g}_h \coloneqq  \smash{\vec{g}_2^h}+ \Pi_h^X \Bog_\Omega ({g_1} - \div \vec{g}_2^h)\quad \text{ in } W^{1,s}(\Omega)\,,
	\end{align*}
	satisfies \eqref{lem:fem:gh.1}. \!In addition,  by the properties of the Bogovski\u{\i} operator and  Lemma~\ref{cor:fem:appr_x}, for 
	$\vec{g}\coloneqq  \vec{g}_b + \Bog_\Omega (g_1 - \div \vec{g}_b)\in W^{1,s}(\Omega)$, we conclude that  \eqref{eq:fem:div_eq}  and \eqref{lem:fem:gh.2} apply. 
\end{proof}

In order to establish the existence of a discrete velocity vector field in Problem~(\hyperlink{Qh}{Q$_h$}), we prefer a problem formulation \textit{``hiding''} the discrete pressure. More precisely, taking the discrete lift $\vec{g}_h\in X_h$ from Lemma \ref{lem:fem:gh} and making~the~ansatz
\begin{align}\label{eq:discrete_ansatz}
	\vec{u}_h \coloneqq  \vec{v}_h - \vec{g}_h \in V_{h,0}\,,
\end{align}
leads to the following discrete counterpart of Problem (\hyperlink{P}{P}):\enlargethispage{5mm}

\textsc{Problem} (\hypertarget{Ph}{P$_h$}). 
Given $\vec{f}\hspace*{-0.15em}\in\hspace*{-0.15em}V\d$, $g_1\hspace*{-0.15em}\in\hspace*{-0.15em} L^{s'}(\Omega)$, $\vec{g}_2\hspace*{-0.15em}\in\hspace*{-0.15em} W^{1-\frac{1}{s},s}(\partial\Omega)$,~and~${\vec{g}_b \hspace*{-0.15em}\in\hspace*{-0.15em} W^{1,s}(\Omega)}$ with $\vec{g}_b=\vec{g}_2$ a.e.\ in $\partial\Omega$, 
find $\vec{u}_h \in V_{h,0}$ such that
\begin{align*}
	(\vec{S}(\D{u}_h+\D{g}_h), \D{z}_h)_\Omega + b(\vec{u}_h+\vec{g}_h, \vec{u}_h+\vec{g}_h,\vec{z}_h)  = (\vec{f}, \vec{z}_h)_\Omega \quad\text{ for all }\vec{z}_h \in V_{h,0}\,.
\end{align*}

The equivalence of Problem (\hyperlink{Qh}{Q$_h$}) and Problem (\hyperlink{Ph}{P$_h$}) follows  from Lemma~\ref{lem:fem:gh} and the discrete inf-sup condition (cf. Lemma \ref{lem:fem:discrete-inf-sup}).

	
	\section{Existence and convergence of discrete solutions} \label{sec:fem:ex_conv}

In this section, we establish the well-posedness of Problem (\hyperlink{Qh}{Q$_h$}) (or equivalently of Problem (\hyperlink{Ph}{P$_h$})) and the (weak) convergence of a discrete solution $(\vec{v}_h,q_h)^\top\in X_h\times Q_h$ of Problem (\hyperlink{Qh}{Q$_h$}) to a solution $(\vec{v},q)^\top\in X\times Q^s$ of Problem (\hyperlink{Q}{Q}). To this end, we need to restrict either to the case that $p>2$ or to the case of sufficiently \textit{``regular and small''} data.

\begin{assum} \label{assum:fem:smallness}
	We assume that $p>2$ or that the data are sufficiently regular and small, i.e., there exists a sufficiently small constant $c_{\textup{data}}>0$, depending only on the characteristics of $\vec{S}$, $\vec{f}$, and $\Omega$, such that $\norm{g_1}_s + \norm{\vec{g_2}}_{\smash{1-\frac{1}{s},s}} \leq c_{\textup{data}}$.
\end{assum}
\begin{remark}
	The additional regularity implicated by Assumption~\ref{assum:fem:regularity} becomes relevant in the case $p < \tfrac{3d}{d+2}$ and is discussed in~\cite{JR21_inhom}.
\end{remark}

\begin{prop} \label{prop:fem:existence}
	Let Assumptions \ref{assum:fem:geometry}, \ref{assum:fem:projection_x}, \ref{assum:extra_stress} and \ref{assum:fem:smallness} be satisfied and let $p \geq \tfrac{2d}{d+1}$.
	Then, there exists a solution $(\vec{v}_h, q_h)^\top \in X_h\times Q_h$ to Problem (\hyperlink{Qh}{Q$_h$}) and
	\begin{align} \label{eq:fem:apriori}
		\|\vec{v}_h\|_{1,p} + \|q_h\|_{s'} \leq c\,,
	\end{align}
	where $c>0$ is a constant depending only on the characteristics of $\vec{S}$, $\omega_0$, $m$, $k$, $\|\vec{f}\|_{V^*}$, $\|g_1\|_{s,\Omega}$, $\|\vec{g}_2\|_{\smash{1-\frac{1}{s},s,\Omega}}$, and $\Omega$.
\end{prop}
\begin{proof}
	Suppose $\|g_1\|_s + \|\vec{g_2}\|_{\smash{1-\frac{1}{s},s}} \leq R$ for some $R >0$. According to Lemma~\ref{lem:fem:gh}, we, then, have that $\|\vec{g}_h\|_{1,s,\Omega} \hspace*{-0.1em}\leq\hspace*{-0.1em} c \, R$, where $c\hspace*{-0.1em}>\hspace*{-0.1em}0$ is a constant depending~only~on~$\omega_0$,~$m$,~$s$, and $\Omega$.
	We intend to apply the \textit{main theorem on locally coercive operators} (cf.\ \cite[Thm. 27.B]{Zeidler2B}) to the non-linear operator $A_h\colon \hspace*{-0.1em}V_{h,0}\hspace*{-0.1em}\to\hspace*{-0.1em} (V_{h,0})^*$, for every $\vec{z}_h,\vec{w}_h\hspace*{-0.1em}\in\hspace*{-0.1em} V_{h,0}$,~defined~by
	\begin{align*}
		\langle A_h\vec{z}_h,\vec{w}_h\rangle_{V_{h,0}}\coloneqq (\vec{S}(\D{z}_h+\D{g}_h), \D{w}_h)_\Omega+b(\vec{z}_h+\vec{g}_h, \vec{z}_h+\vec{g}_h,\vec{w}_h)-(\vec{f}, \vec{w}_h)_\Omega\,,
	\end{align*}
	to establish that $0\in \textup{im}(A_h)$, i.e.,  the existence of a solution $\vec{u}_h\in V_{h,0}$ to Problem (\hyperlink{Ph}{P$_h$}), if $p>2$ or if $R>0$ is sufficiently small. Since, due to the finite-dimensionality of $V_{h,0}$, it is readily seen that $A_h\colon V_{h,0}\to (V_{h,0})^*$ is well-defined and continuous, it is only left to show that $A_h\colon V_{h,0}\to (V_{h,0})^*$ is \textit{locally coercive}, i.e., that there exists a constant ${r\hspace*{-0.1em}>\hspace*{-0.1em} 0}$ such that for every $\vec{z}_h\in V_{h,0}$ from $\|\D{z}_h\|_{p,\Omega}=r$, it~follows~that~${\langle A_h\vec{z}_h,\vec{z}_h\rangle_{V_{h,0}}\ge 0}$.

	Due to~\cite[Lem.\ 2.13]{JR21_inhom}, for every $\vec{z}_h\in V_{h,0}$, we have hat
	\begin{align} \label{prop:fem:existence.1}
		(\vec{S}(\D{z}_h+\D{g}_h), \D{z}_h)_\Omega
		\geq c_1 \,\|\D{z}_h\|_{p,\Omega}^p - c_2 \,\|\D{z}_h\|_{p,\Omega}\,,
	\end{align}
	where $c_1  >0$ depends only on the characteristics of $\vec{S}$ and  $c_2 >0$, in addition,~on~$s$, $\|g_1\|_{s,\Omega}$, $\|\vec{g}_b\|_{1,s,\Omega}$, $\Omega$, $m$, and $\omega_0$.
	With the skew-symmetry property of Temam's modification, Hölder's,~Sobolev's, and Korn's inequality, for every $\vec{z}_h\in V_{h,0}$,~we~find~that
	\begin{align} \label{prop:fem:existence.2}
		\begin{aligned}
			\vert b(\vec{z}_h+\vec{g}_h, \vec{z}_h+\vec{g}_h,\vec{z}_h)\vert 
			&\leq c_3\,\|\vec{g}_h\|_{1,s,\Omega} \,\|\D{z}_h\|_{p,\Omega}^2 + c_4\, \|\D{z}_h\|_{p,\Omega}\,,\\
			\vert (\vec{f}, \vec{z}_h)_\Omega\vert &  \leq c_5\, \|\vec{f}\|_{V\d}\, \|\D{z}_h\|_{p,\Omega}\,,
		\end{aligned}
	\end{align}
	where $c_3, c_5>0$ depend only on $p$ and $\Omega$ and $c_4>0$, in addition, on $s$, $\|g_1\|_{s,\Omega}$, $\|\vec{g}_b\|_{1,s,\Omega}$,  $m$, and $\omega_0$.
	Next, we distinguish the cases $p>2$ and $R>0$ \textit{``sufficiently~small''}:
	
	\textit{Case $p>2$.} In the case $p>2$, the growth properties in \eqref{prop:fem:existence.1} dominate the growth properties in \eqref{prop:fem:existence.2}, so that $A_h\colon V_{h,0}\to (V_{h,0})^*$ is even \textit{``globally''} coercive, i.e., $\lim_{\|\D{z}_h\|_{p,\Omega}\to \infty}{	\langle A_h\vec{z}_h,\vec{z}_h\rangle_{V_{h,0}}/\|\D{z}_h\|_{p,\Omega}}=\infty$.
	
	\textit{Case $R>0$ ``sufficiently small''.}
	It holds that~$\|\vec{g}_h\|_{1,s,\Omega}\to 0$~as~$R\to 0$. Therefore,
	there exists a constant $R>0$ sufficiently small, so that
	\begin{align*}
		c_1\geq 2 \,(c_3\,\|\vec{g}_h\|_{1,s,\Omega})^{p-1} (c_2+c_4+c_5\,\|\vec{f}\|_{V\d})^{2-p}\,.
	\end{align*}
	Thus, $A_h\colon \hspace*{-0.1em}V_{h,0} \hspace*{-0.1em}\to \hspace*{-0.1em} (V_{h,0})^*$ is locally coercive with ${r  \hspace*{-0.1em}\coloneqq \hspace*{-0.1em} (2 (c_2+c_4+c_5\,\|\vec{f}\|_{V\d})c_1^{-1})^{1/(p-1)}}$, since from  $\|\D{z}_h\|_{p,\Omega} = r$, it follows that (cf.\ \cite[Thm.~4.10]{JR23_inhom})
	\begin{align*}
		&(\vec{S}(\D{z}_h+\D{g}_h),\D{z}_h)_\Omega - b(\vec{z}_h+\vec{g}_h, \vec{z}_h+\vec{g}_h,\vec{z}_h) -(\vec{f}, \vec{z}_h)_\Omega \\
		&\geq c_1 \, r^p - c_3\,\|\vec{g}_h\|_{1,s,\Omega}\, r^2 - (c_2+c_4+c_5\,\|\vec{f}\|_{V\d})\, r \\
		&\geq r^p\, [ (\tfrac{c_1}{2} - c_3\,\|\vec{g}_h\|_{1,s,\Omega} \, r^{2-p}) + (\tfrac{c_1}{2} - (c_2+c_4+c_5\,\|\vec{f}\|_{V\d}) \,r^{1-p}) ] \geq 0\,.
	\end{align*}
	As a consequence, the main theorem on locally coercive operators (cf. \cite[Thm. 27.B]{Zeidler2B}) yields the existence of a solution $\vec{u}_h\in V_{h,0}$ to Problem (\hyperlink{Ph}{P$_h$}) with $\norm{\D{u}_h}_p \leq r$.
	Since, by the ansatz \eqref{eq:discrete_ansatz} and by the discrete inf-sup stability result (cf. Lemma~\ref{lem:fem:discrete-inf-sup}), Problem (\hyperlink{Ph}{P$_h$}) and Problem (\hyperlink{Qh}{Q$_h$}) are equivalent, we infer the existence of a solution $(\vec{v}_h,q_h)^\top\in X_h\times Q_h$ to Problem (\hyperlink{Qh}{Q$_h$}) with \eqref{eq:fem:apriori}.\enlargethispage{5mm}
\end{proof}

\begin{theorem} \label{prop:fem:convergence}
	Let the assumptions from Proposition~\ref{prop:fem:existence} be satisfied. If $p \leq \tfrac{3d}{d+2}$, then let,  in addition,  Assumption~\ref{assum:fem:localbasis} be satisfied. Moreover, let $(h_n)_{n\in \mathbb{N}}\subseteq (0,1]$ be a sequence such that $h_n\to 0$ $(n\to \infty)$ and  
	let $(\vec{v}_{h_n}, q_{h_n})^\top\in X_{h_n}\times Q_{h_n}$, $n\in \mathbb{N}$, be the corresponding sequence of solutions to Problem (\hyperlink{Qh}{Q$_{h_n}$}) which fulfill a uniform a priori estimate \eqref{eq:fem:apriori}. Then, there exists a subsequence $(n_k)_{k\in \mathbb{N}}\subseteq \mathbb{N}$ such that
	\begin{align*}
		\begin{aligned}
			\vec{v}_{h_{n_k}} &\weakto \vec{v} &&\quad\textrm{ in } X&&\quad(k\to \infty )\,, \\
			q_{h_{n_k}} &\weakto q &&\quad\textrm{ in } Q^s&&\quad(k\to \infty )\,,
		\end{aligned}
	\end{align*}
	where $(\vec{v}, q)^\top\in X\times Q^s$ is a solution to Problem (\hyperlink{Q}{Q}).
\end{theorem}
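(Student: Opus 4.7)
The plan is to extract weakly convergent subsequences using the uniform a priori bound \eqref{eq:fem:apriori}, identify the limit as a solution to Problem (\hyperlink{Q}{Q}), and treat the nonlinear stress tensor via a monotonicity argument. By reflexivity of $X$ and $Q^s$, from \eqref{eq:fem:apriori} I extract a (not relabeled) subsequence with $\vec{v}_{h_n} \weakto \vec{v}$ in $X$ and $q_{h_n} \weakto q$ in $Q^s$. By Rellich--Kondrachov, $\vec{v}_{h_n} \to \vec{v}$ strongly in $L^p(\Omega)$, and since $p \ge \tfrac{2d}{d+1}$ also in $L^{2s'}(\Omega)$, which will handle the convective term. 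For the divergence constraint \eqref{eq:fem:q_div}, I test the discrete equation \eqref{eq:fem:qh_div} with $\Pi_{h_n}^Y z$ for arbitrary $z \in Y$, use Lemma~\ref{lem:fem:appr_y} together with weak lower semicontinuity to obtain $(\div \vec{v}, z)_\Omega = (g_1, z)_\Omega$. For the inhomogeneous trace \eqref{eq:fem:q_bdry}, I use continuity of the trace operator together with $\vec{g}_2^{h_n} = \Pi_{h_n}^X \vec{g}_b \to \vec{g}_b$ in $X$ from Corollary~\ref{cor:fem:appr_x}.

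Next, I pass to the limit in \eqref{eq:fem:qh_main}, testing with $\Pi_{h_n}^X \vec{z}$ for arbitrary $\vec{z} \in V^s$. The pressure term converges because $q_{h_n} \weakto q$ in $L^{s'}(\Omega)$ and $\div \Pi_{h_n}^X \vec{z} \to \div \vec{z}$ in $L^s(\Omega)$. The convective term $b(\vec{v}_{h_n}, \vec{v}_{h_n}, \Pi_{h_n}^X \vec{z}) \to -(\vec{v}\otimes\vec{v}, \nabla\vec{z})_\Omega$ by combining strong $L^{2s'}$-convergence of $\vec{v}_{h_n}$ with strong convergence $\nabla \Pi_{h_n}^X \vec{z} \to \nabla \vec{z}$ and the consistency property \eqref{eq:fem:consistency} applied in the limit. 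From the growth bound on $\vec{S}$ (cf.\ Proposition~\ref{lem:growth_SF}) together with \eqref{eq:fem:apriori}, the sequence $\vec{S}(\D{v}_{h_n})$ is bounded in $L^{p'}(\Omega)$ and thus weakly converges (along a further subsequence) to some $\overline{\vec{S}} \in L^{p'}(\Omega)$. The limit equation then reads $(\overline{\vec{S}}, \D{z})_\Omega - (\vec{v}\otimes\vec{v},\nabla\vec{z})_\Omega - (q,\div\vec{z})_\Omega = (\vec{f},\vec{z})_\Omega$ for all $\vec{z} \in V^s$.

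The hard part is the identification $\overline{\vec{S}} = \vec{S}(\D{v})$ a.e.\ in $\Omega$. In the pseudo-monotone regime $p > \tfrac{3d}{d+2}$, test functions lie in $V = V^s$, so I can use Minty's trick directly: testing the discrete equation with $\vec{v}_{h_n} - \Pi_{h_n}^X \vec{v}$ and using \eqref{eq:fem:skew} to absorb the convective contribution, I derive $\limsup_n (\vec{S}(\D{v}_{h_n}),\D{v}_{h_n})_\Omega \le (\overline{\vec{S}},\D{v})_\Omega$. Monotonicity of $\vec{S}$ (cf.\ \eqref{assum:extra_stress.1}) then yields $\overline{\vec{S}} = \vec{S}(\D{v})$ via the standard Minty argument. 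The critical obstacle is the degenerate case $p \le \tfrac{3d}{d+2}$, where the discrete velocity $\vec{v}_{h_n} - \Pi_{h_n}^X \vec{v}$ is not an admissible test function in the limit equation (the convective term requires $W^{1,s}$-regularity). Here I invoke the discrete (divergence-corrected) Lipschitz truncation technique, relying precisely on Assumption~\ref{assum:fem:localbasis}, following \cite{DKS13a, Tscherpel_phd}: I construct truncated, discretely divergence-free approximations $\vec{w}_{h_n,\lambda}$ of $\vec{v}_{h_n} - \Pi_{h_n}^X \vec{v}$ that are uniformly bounded in $W^{1,\infty}(\Omega)$ on sets of large measure and that coincide with the original sequence except on a small bad set.

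Testing \eqref{eq:fem:qh_main} with the Lipschitz-truncated test function, controlling the bad set by its small measure, and passing to the limit first in $n$ and then in the truncation parameter $\lambda$, yields the crucial inequality $\limsup_n \int_\Omega (\vec{S}(\D{v}_{h_n}) - \vec{S}(\D{v})):(\D{v}_{h_n} - \D{v})\,\theta\,dx \le 0$ for suitable localization weights $\theta$. By the Chacon biting lemma together with the monotonicity \eqref{assum:extra_stress.1}, I deduce a.e.\ convergence $\D{v}_{h_n} \to \D{v}$ along a further subsequence and hence $\overline{\vec{S}} = \vec{S}(\D{v})$ a.e., completing the identification and therefore the proof that $(\vec{v},q)^\top$ solves Problem (\hyperlink{Q}{Q}).
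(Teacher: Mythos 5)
Your overall strategy is sound and matches the standard paradigm (weak compactness, passage to the limit, identification of the stress via monotonicity and discrete Lipschitz truncation), but it deviates from the paper's proof in two substantive ways.

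First, you split the identification step into a pseudo-monotonicity (Minty) argument for $p > \tfrac{3d}{d+2}$ and a Lipschitz-truncation argument for $p \leq \tfrac{3d}{d+2}$. The paper's written proof makes no such case distinction: it always applies the discrete (divergence-corrected) Lipschitz truncation to $\Pi_{h_n}^X(\vec{u}_{h_n}-\vec{u}) \in V_{h_n,0}$, even for $p > \tfrac{3d}{d+2}$. Your splitting is more faithful to the theorem statement (which only imposes Assumption~\ref{assum:fem:localbasis} when $p \leq \tfrac{3d}{d+2}$), and a Minty argument is indeed viable in the sub-critical regime since $s=p$ there; but to run it you must test Problem~(\hyperlink{Ph}{P$_h$}) with $\vec{u}_{h_n} \in V_{h_n,0}$ rather than $\vec{v}_{h_n}-\Pi_{h_n}^X\vec{v}$ directly, so that the discrete pressure term genuinely drops (otherwise the divergence of your test function is only controlled in the $Y_h^*$-sense, not pointwise). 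This is why the paper works throughout with the translated variable $\vec{u}_{h_n}=\vec{v}_{h_n}-\vec{g}_{h_n}$ via Lemma~\ref{lem:fem:gh}, which you omit.

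Second, and more significantly, for the hard case you conclude via "limsup of a weighted $L^1$-norm $\leq 0$", localization weights $\theta$, and the Chacon biting lemma. The paper does not use any of these. It sets $H_{h_n} := (\vec{S}(\D{v}_{h_n})-\vec{S}(\D{v})):(\D{v}_{h_n}-\D{v}) \geq 0$, splits $\|H_{h_n}^{1/2}\|_{1,\Omega}$ over the bad set $B_{h_n,j}$ and its complement via Cauchy--Schwarz, and derives $\limsup_{n\to\infty}\|H_{h_n}^{1/2}\|_{1,\Omega} \leq c\,2^{-j/p}$ for every $j$; letting $j\to\infty$ gives $\|H_{h_n}^{1/2}\|_{1,\Omega}\to 0$, hence a.e.\ convergence $\D{v}_{h_n}\to\D{v}$ along a subsequence, without invoking the biting lemma. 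This is cleaner, and you should be aware that the biting lemma is unnecessary here (it is used in other contexts, e.g.\ unsteady or implicitly constituted problems). Finally, a small imprecision: you claim strong convergence $\vec{v}_{h_n}\to\vec{v}$ in $L^{2s'}(\Omega)$ via Rellich, but at the endpoint $p=\tfrac{2d}{d+1}$ one has $2s' = p^{\ast}$, which is not compactly embedded; the paper is more careful and works with $L^{2\sigma}(\Omega)$ for some $\sigma < p^{\ast}/2$.
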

\begin{proof}
	Let $(\vec{g}_{h_n})_{n\in \mathbb{N}}\subseteq X_{h_n}$ be a sequence of discrete lifts generated by Lemma~\ref{lem:fem:gh}. Using the ansatz $\vec{u}_{h_n} \coloneqq \vec{v}_{h_n} - \vec{g}_{h_n} \in V_{h,0}$ (cf.\ \eqref{eq:discrete_ansatz}) for all $n\in \mathbb{N}$, 
	due to Lemma~\ref{lem:fem:gh}, the a priori estimate \eqref{eq:fem:apriori}, and the Rellich--Kondrachov compactness theorem for some $\sigma \in ( \min \{1, p'/2\}, p\d/2)$, we deduce the existence of a subsequence $(n_k)_{k\in \mathbb{N}}\subseteq \mathbb{N}$ as well as weak limits $\vec{u}\in V$ and $(\vec{v}, q)^\top\in X\times Q^s$ such that\enlargethispage{5mm}
	\begin{alignat}{3} \label{eq:fem:lt_conv_g}
		\vec{g}_{h_{n_k}} &\to \vec{g} &&\quad \textrm{ in } X^s&&\quad(k\to \infty)\,, \\
		\vec{v}_{h_{n_k}} &\weakto \vec{v} &&\quad \textrm{ in } X&&\quad(k\to \infty)\,, \label{eq:fem:lt_conv_v} \\
		\vec{v}_{h_{n_k}} &\to \vec{v} &&\quad\textrm{ in } L^{2\sigma}(\Omega)&&\quad (k\to \infty)\,,
		\label{eq:fem:lt_conv_v2}\\
		\vec{u}_{h_{n_k}} &\weakto \vec{u}  &&\quad \textrm{ in } V&&\quad(k\to \infty )\,,\label{eq:fem:lt_conv_u} \\
		q_{h_{n_k}} &\weakto q &&\quad \textrm{ in } Q^s&&\quad(k\to \infty )\,. \label{eq:fem:lt_conv_pi}
	\end{alignat}
	In favor of readability, for the remainder of the proof, we assume that $n_k\hspace*{-0.1em}=\hspace*{-0.1em}k$~for~all~$k\hspace*{-0.1em}\in\hspace*{-0.1em} \mathbb{N}$.
	Due to the ansatz $\vec{u}_{h_n} \coloneqq \vec{v}_{h_n} - \vec{g}_{h_n} \in V_{h,0}$ (cf.\ \eqref{eq:discrete_ansatz}) for all $n\in \mathbb{N}$, \eqref{eq:fem:lt_conv_g}, \eqref{eq:fem:lt_conv_u}, and the uniqueness of weak limits, we have that $\vec{v}-\vec{g}=\vec{u}\in V$.
	
	It \hspace*{-0.1mm}is \hspace*{-0.1mm}left \hspace*{-0.1mm}to \hspace*{-0.1mm}verify \hspace*{-0.1mm}that \hspace*{-0.1mm}$(\vec{v}, q)^\top\hspace*{-0.25em}\in\hspace*{-0.15em} X\times Q^s\hspace*{-0.15em}$  \hspace*{-0.1mm}solves \hspace*{-0.1mm}Problem \hspace*{-0.1mm}(\hyperlink{Q}{Q}), \hspace*{-0.1mm}i.e., \hspace*{-0.1mm}satisfies~\hspace*{-0.1mm}\mbox{\eqref{eq:fem:q_main}--\eqref{eq:fem:q_bdry}}:
	
	\textit{ad \eqref{eq:fem:q_bdry}.} From $\vec{v} - \vec{g} \in V$, it follows that $\vec{v} =\vec{g}$ in $W^{1-\frac{1}{s},s}(\partial\Omega)$, i.e., \eqref{eq:fem:q_bdry}.
	
	\textit{ad \eqref{eq:fem:q_div}.} Using \eqref{eq:fem:lt_conv_v}, we find that $\vec{u}\in V_0$ and, thus, that $\vec{v}\in X$ satisfies \eqref{eq:fem:q_div}. In fact, using \eqref{eq:fem:lt_conv_v}, $\smash{\vec{u}_{h_n}}\in  \smash{V_{h_n,0}}$ for all $n\in \mathbb{N}$, \eqref{eq:fem:lt_conv_u}, and Lemma~\ref{lem:fem:appr_y}, for~every~${\eta\in  Y^p}$, we deduce that
	\begin{align*}
		0 = (\div \vec{u}_{h_n}, \Pi_{h_n}^Y \eta)_\Omega \to (\div \vec{u}, \eta)_\Omega \quad (n\to \infty)\,,
	\end{align*}
	i.e., $\div \vec{u}=0$ a.e.\ in $\Omega$ and, thus, $\div\vec{v}=g_1$ in $L^s(\Omega)$, i.e., \eqref{eq:fem:q_div}.

	\textit{ad \eqref{eq:fem:q_main}.} 
	\!The boundedness property of the extra stress tensor $\vec{S}\colon \mathbb{R}^{d\times d}\to \mathbb{R}^{d\times d}_{\textup{sym}}$~(cf. Assumption \ref{assum:extra_stress}\eqref{assum:extra_stress.2}) implies that the sequence $(\vec{S}(\boldsymbol{D}\vec{v}_{h_n}))_{n\in\mathbb{N}}\subseteq L^{p'}(\Omega)$~is~bounded. By passing to a not relabeled subsequence, we obtain a limit $ \vec{S'}\in L^{p'}(\Omega)$ such that 
	\begin{align} \label{eq:fem:lt_conv_s}
		\vec{S}(\boldsymbol{D}\vec{v}_{h_n}) \weakto \vec{S'} \qquad\textrm{in } L^{\smash{p'}}(\Omega)\quad (n\to \infty)\,.
	\end{align} 
	Let $\vec{z} \in V^s$ be fixed, but arbitrary. Due to Corollary~\ref{cor:fem:appr_x}, we have that
	$\Pi_{h_n}^X \vec{z} \to \vec{z}$ in $X^s$ $(n\to \infty)$. Using this and the convergences \eqref{eq:fem:lt_conv_s}, \eqref{eq:fem:lt_conv_v}, \eqref{eq:fem:lt_conv_v2}, and \eqref{eq:fem:lt_conv_pi}, testing \eqref{eq:fem:qh_main} with $\vec{z}_{h_n}=\Pi_{h_n}^X \vec{z}\in X_{h_n}$ for all $n\in \mathbb{N}$, and, subsequently, passing~for~$n\to \infty$,  for every $\vec{z} \in V^s$, we arrive that
	\begin{align} \label{prop:fem:convergence.1}
		(\vec{S'}, \D{z})_\Omega  - (\vec{v} \otimes \vec{v}, \nabla\vec{z}) _\Omega - (q, \div \vec{z})_\Omega  = (\vec{f}, \vec{z})_\Omega \,.
	\end{align}
	It remains to prove that $\vec{S'} = \vec{S}(\D{v})$ in $L^{\smash{p'}}(\Omega)$. To this end, we
	deploy the celebrated discrete (divergence-corrected) Lipschitz truncation method (cf.~\cite{DKS13a} or \cite[Ass. 2.20]{Tscherpel_phd}):
	
	We set $H_{h_n} \coloneqq (\vec{S}(\D{v}_{h_n}) - \vec{S}(\D{v})) : (\D{v}_{h_n} - \D{v})\in L^1(\Omega)$ for all $n\in \mathbb{N}$ and note that $H_h \geq 0$ a.e.\ in  $\Omega$ (since $\vec{S}\colon \mathbb{R}^{d\times d}\to \mathbb{R}^{d\times d}_\textup{sym}$ is monotone, cf.~Assumption~\ref{assum:extra_stress}\eqref{assum:extra_stress.1}). 
	We consider the error $\vec{e}_{h_n}\coloneqq \vec{v}_{h_n} - \vec{v} \in X$ for all $n\in \mathbb{N}$. 
	The properties of  $\Pi_{h_n}^X\colon X\to X_{h_n}$, $n\in \mathbb{N}$, (cf. Assumption~\ref{assum:fem:projection_x})  yield that $\Pi_{h_n}^X (\vec{u}_{h_n}-\vec{u}) \in V_{h_n,0}$ for all $n\in \mathbb{N}$.
	From \eqref{eq:fem:lt_conv_u} and the stability of $\Pi_{h_n}^X\colon X\to X_{h_n}$, $n\in \mathbb{N}$,  (cf. Assumption~\ref{assum:fem:projection_x}), we conclude that $\Pi_{h_n}^X (\vec{u}_{h_n}-\vec{u}) \weakto \vec{0}$ in $V$ $(n\to \infty)$. Then, for every $j ,n\in \N$, let $\vec{v}^{h_n,j} \in V_{h_n}$ and $\vec{w}^{h_n,j} \in V_{h_n,0}$ be the discrete (divergence-corrected) Lipschitz truncations of the sequence $\Pi_{h_n}^X (\vec{u}_{h_n}-\vec{u})\in V_{h_n,0}$, $n\in \mathbb{N}$, as provided by Theorem~\ref{thm:fem:dlt} and Lemma~\ref{lem:fem:dlt}. Let the sequence of bad sets $B_{h_n,j} \subseteq\Omega$, $j,n\in \mathbb{N}$, be defined as in Theorem~\ref{thm:fem:dlt}. Then, we denote the corresponding complements in $\Omega$ as $B_{h_n,j}^c \coloneqq \Omega \setminus B_{h_n,j}$ for all $j,n\in \mathbb{N}$.
	
	Using Cauchy--Schwarz' inequality, for every $n,j\in \mathbb{N}$, we obtain
	\begin{align}\label{prop:fem:convergence.2}
		\| H_{h_n} ^{1/2}\|_{1,\Omega}
		\leq\| H_{h_n} ^{1/2}  \|_{1,B_{h_n,j}}
		+ \|H_{h_n} \|_{1,B_{h_n,j}^c}^{1/2}\,.
	\end{align}
	For the first term on the right-side of \eqref{prop:fem:convergence.2}, due to Cauchy--Schwarz' inequality, (\hyperlink{eq:fem:ltb}{A.2}), $\lambda_{n,j} \geq 1$~for~all~${j,n\in \mathbb{N}}$, \eqref{eq:fem:apriori}, and \eqref{eq:fem:lt_conv_s}, for every $n,j\in \mathbb{N}$, we find that
	\begin{align}\label{prop:fem:convergence.3}
		\| H_{h_n} ^{1/2}  \|_{1,B_{h_n,j}}
		\leq \vert B_{h_n,j}\vert^{1/2} \|H_{h_n} \|_{1,\Omega}^{1/2}
		\leq c\, 2^{-j/2}\,.
	\end{align}
	The second term on the right-side of \eqref{prop:fem:convergence.2}, for every $n,j\in \mathbb{N}$, is decomposed as
	\begin{align}\label{prop:fem:convergence.4}
		\begin{aligned}
			\|H_{h_n}\|_{1,B_{h_n,j}^c}
			&=  (\vec{S}(\D{v}_{h_n} ) - \vec{S}(\D{v}),\vec{D}(\Pi_{h_n} ^X \vec{u} - \vec{u} + \vec{g}_{h_n}  - \vec{g}))_{B_{h_n,j}^c}\\
			&\quad+ (\vec{S}(\D{v}_{h_n} ) - \vec{S}(\D{v}),\vec{D}(\Pi_{h_n} ^X (\vec{u}_{h_n} -\vec{u})))_{B_{h_n,j}^c}
			\\&\eqqcolon I_n^1+I_n^2\,,
		\end{aligned}
	\end{align}
	so that it is only left to show that $\limsup_{n\to \infty}{\vert I_n^i\vert }\leq c\, 2^{-j/p}$ for all $j\in \mathbb{N}$ and $i=1,2$:
	
	\textit{ad $I_n^1$.}
	Due to \eqref{eq:fem:lt_conv_s}, Corollary~\ref{cor:fem:appr_x}, and Lemma~\ref{lem:fem:gh}, it holds that
	\begin{align}\label{prop:fem:convergence.5}
		I_n^1\to 0\quad (n\to \infty)\,.
	\end{align}
	
	\textit{ad $I_n^2$.} For every $n\in \mathbb{N}$, due to the decomposition 
	\begin{align}\label{prop:fem:convergence.6}
		\begin{aligned}
			I_n^2
			&= (\vec{S}(\D{v}_{h_n} ), \D{w}_{h_n,j})_{\Omega}
			- (\vec{S}(\D{v}), \D{w}_{h_n,j})_{\Omega} \\
			& \quad + (\vec{S}(\D{v}_{h_n} ) - \vec{S}(\D{v}), \D{v}_{h_n,j} - \D{w}_{h_n,j})_{\Omega}
			\\&\quad- (\vec{S}(\D{v}_{h_n} ) - \vec{S}(\D{v}), \D{v}_{h_n,j})_{B_{h_n,j}}\\
			&\eqqcolon  I_n^{21} -  I_n^{22}  +  I_n^{23}-  I_n^{24} \,,
		\end{aligned}
	\end{align}
	we have that $\limsup_{n\to \infty}{I_n^2}\leq c\, 2^{-j/p}$ for all $j\in\mathbb{N}$ if $\limsup_{n\to \infty}{I_n^{2i}}\leq c\, 2^{-j/p}$ for all $j\in \mathbb{N}$ and $i=1,\dots,4$:
	
	\textit{ad $I_n^{21}$.} We resort to \eqref{eq:fem:qh_main} to obtain 	$I_n^{21} = - b(\vec{v}_{h_n}, \vec{v}_{h_n}, \vec{w}_{h_n,j}) + (\vec{f}, \vec{w}_{h_n,j})_\Omega$ for all $j,n\in \mathbb{N}$, 
	which, by the same argumentation as above when passing for $h\to 0$~in~\eqref{eq:fem:qh_main}, 
	lets us conclude that 
	\begin{align}\label{prop:fem:convergence.7}
		I_n^{21}  \to 0\quad (n\to \infty)\,.
	\end{align}
	
	\textit{ad $I_n^{22}$.}  Due to (\hyperlink{eq:fem:ltw1s}{A.8}), we have that\enlargethispage{3mm}
	\begin{align}\label{prop:fem:convergence.8}
		I_n^{22}  \to 0\quad (n\to \infty)\,.
	\end{align}
	
	\textit{ad $I_n^{23}$/$I_n^{24}$.} 
	Using  \eqref{eq:fem:lt_conv_s}, (\hyperlink{eq:fem:ltw1s}{A.6}), (\hyperlink{eq:fem:ltw1s}{A.3}), and (\hyperlink{eq:fem:ltw1s}{A.2}), for every $j,n\in \mathbb{N}$, we find that
	\begin{align}\label{prop:fem:convergence.9}
		\vert I_n^{23}\vert  + \vert I_n^{23}\vert \leq c\, 2^{-j/p}\,.
	\end{align}
	As a result, combining \eqref{prop:fem:convergence.7}--\eqref{prop:fem:convergence.9} in \eqref{prop:fem:convergence.6}, for every $j\in \mathbb{N}$, we conclude that
	\begin{align*}
		\limsup_{n\to \infty}{\vert I_n^2\vert} \leq c\, 2^{-j/p}\,,
	\end{align*}
	which together with \eqref{prop:fem:convergence.5} in \eqref{prop:fem:convergence.4} and \eqref{prop:fem:convergence.3} in \eqref{prop:fem:convergence.2}, for every $j\in \mathbb{N}$, 
	implies that
	\begin{align}\label{prop:fem:convergence.10}
		\limsup_{n\to \infty} {\|H_{h_n}^{1/2}  \|_{1,\Omega}}\leq c\,2^{-j/p}\,.
	\end{align}
	Eventually, using \eqref{prop:fem:convergence.10} together with Lemma~\ref{lem:growth_SF}, we conclude that
	\begin{align*}
		\limsup_{n\to \infty} {\| \phi_{\abs{\D{v}}}(\abs{\D{v}_{h_n}-\D{v}})^{1/2} \|_{1,\Omega}}
		\leq \limsup_{n\to \infty} {\|H_{h_n}^{1/2}  \|_{1,\Omega}}
		= c\,2^{-j/p}\to 0\quad (j\to \infty)\,.
	\end{align*}
	Hence, there exists a not relabeled subsequence such that $\phi_{\abs{\D{v}}}(\abs{\D{v}_{h_n}-\D{v}})\to 0$ $(n\to \infty)$ a.e.\ in $\Omega$. Consequently, we have that $\D{v}_{h_n} \to \D{v}$ and $\vec{S}(\D{v}_{h_n}) \to \vec{S}(\D{v})$ $(n\to \infty)$ a.e.\ in $\Omega$. This, in combination with \eqref{eq:fem:lt_conv_s}, yields that $\vec{S'} = \vec{S}(\D{v})$~in~$L^{\smash{p'}}(\Omega)$.
\end{proof}

\begin{remark}
	Note that in the case of homogeneous Dirichlet boundary conditions and a homogeneous divergence constraint weak convergence of FE discretizations has been established for the whole range $p>\frac{2d}{d+2}$ using either exactly \mbox{divergence-free}~FE spaces~(cf.\ \hspace*{-0.1mm}\cite{DKS13a}), reconstruction operators~(cf.\ \hspace*{-0.1mm}\cite{FGS22_boussinesq}) or a regularization~argument~(cf.~\hspace*{-0.1mm}\cite{Tscherpel_phd}).
\end{remark}
	
	\section{Error estimates}\label{sec:error}

In this section, we derive error estimates for the approximation of a \textit{``regular''} solution $(\vec{v}, q)^\top\in X\times Q^s$ of Problem (\hyperlink{Q}{Q}) through a solution $(\vec{v}_h, q_h)^\top\in X_h\times Q_h$ of Problem (\hyperlink{Qh}{Q$_h$}).

\begin{assum}[regularity] \label{assum:fem:regularity}
	We assume that $(\vec{v}, q)^\top\in X\times Q^s$ is a solution of Problem (\hyperlink{Q}{Q}) that satisfies both $\vec{F}(\D{v}) \in W^{1,2}(\Omega)$ and $q \in W^{1,p'}(\Omega)$. 
\end{assum}

We set $r \coloneqq \min \{2, p\}$ and note that Assumption \ref{assum:fem:regularity} implies that $\vec{v} \in W^{2,r}(\Omega)$, at least if $\delta > 0$ (cf.~\cite{NW05_interior, BDR10_strongsol}). Furthermore, a Sobolev embedding shows that $\vec{v} \in L^{\infty}(\Omega)$.
The regularity presumed above has been proved in some cases (cf. \cite{NW05_interior, BDR10_strongsol}). However, there are various open questions, in particular, on the interplay of the regularity of the pressure and the velocity vector field. We~refer~to~\cite{BBDR12_fem} for a discussion of this issue.\enlargethispage{10mm}

\begin{theorem}[velocity error estimate] \label{prop:fem:error}
	Let the  assumptions of Theorem~\ref{prop:fem:convergence} and Assumption~\ref{assum:fem:regularity} be satisfied with $\delta > 0$. Then, there exists a constant $c_0>0$, depending only on the characteristics of $\vec{S}$, $\delta^{-1}$, $\omega_0$, $m$, $k$, and $\Omega$, such that from 
	\begin{align} \label{eq:fem:error_smallness}
		\|\vec{v}\|_{1,(r\d/2)',\Omega}
		\leq c_0\,,
	\end{align}
	it follows that
	\begin{align} \label{eq:fem:error_vel}
		\begin{aligned}
			\|\vec{F}(\D{v}_h) - \vec{F}(\D{v})\|_{2,\Omega}^2
			&\leq c\, h^2\,\|\nabla \vec{F}(\D{v})\|_{2,\Omega}^2+ c\, \rho_{(\phi_{\abs{\D{v}}})\d,\Omega}(h \nabla q)\,,
		\end{aligned}
	\end{align}
	where $c > 0$ is a constant depending only on the characteristics of $\vec{S}$, $\delta^{-1}$, $\omega_0$,~$m$,~$k$, $\|\vec{v}\|_{\infty,\Omega}$, $\|\D{v}\|_{p,\Omega}$, $c_0$, and $\Omega$.
\end{theorem}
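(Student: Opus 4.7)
The plan is to start from the equivalence
\[
\|\vec{F}(\D{v}_h) - \vec{F}(\D{v})\|_{2,\Omega}^2 \sim (\vec{S}(\D{v}_h) - \vec{S}(\D{v}), \D{v}_h - \D{v})_\Omega
\]
of Proposition~\ref{lem:growth_SF} and to split $\vec{v}_h - \vec{v} = \vec{w}_h + \vec{\eta}_h$ with $\vec{w}_h \coloneqq \vec{v}_h - \Pi_h^X \vec{v}$ and $\vec{\eta}_h \coloneqq \Pi_h^X \vec{v} - \vec{v}$. Using the preservation of zero boundary values and of divergence in the $Y_h^*$-sense built into Assumption~\ref{assum:fem:projection_x}, together with $\vec{v}-\vec{g}\in V$ and $\vec{v}_h-\vec{g}_h\in V_{h,0}$, one checks that $\vec{w}_h \in V_{h,0}$, so that it is an admissible test function in \eqref{eq:fem:qh_main} and annihilates both the continuous and the discrete pressure whenever $Y_h$-interpolants are inserted. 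The interpolation contribution $(\vec{S}(\D{v}_h) - \vec{S}(\D{v}), \D{\eta}_h)_\Omega$ is then controlled by the pointwise Young inequality \eqref{eq:fem:orliczyoung} combined with Proposition~\ref{lem:growth_SF} and the change-of-shift Lemma~\ref{lem:shift_change}, producing $\epsilon\,\|\vec{F}(\D{v}_h) - \vec{F}(\D{v})\|_{2,\Omega}^2$ plus the standard Orlicz interpolation estimate $\|\vec{F}(\D{\Pi_h^X v}) - \vec{F}(\D{v})\|_{2,\Omega}^2 \leq c\,h^2\,\|\nabla \vec{F}(\D{v})\|_{2,\Omega}^2$.

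For the Galerkin-type part $(\vec{S}(\D{v}_h) - \vec{S}(\D{v}), \D{w}_h)_\Omega$ I subtract the discrete equation \eqref{eq:fem:qh_main} tested with $\vec{w}_h\in V_{h,0}$ (so that $(q_h,\div\vec{w}_h)_\Omega=0$) from the continuous equation \eqref{eq:fem:q_main} tested with the same $\vec{w}_h\in V^s$, and apply the consistency identity \eqref{eq:fem:consistency} to replace $-(\vec{v}\otimes\vec{v},\nabla\vec{w}_h)_\Omega$ by $b(\vec{v},\vec{v},\vec{w}_h)$. Inserting $\Pi_h^Y q\in Y_h$ for free yields
\begin{align*}
(\vec{S}(\D{v}_h) - \vec{S}(\D{v}), \D{w}_h)_\Omega = \bigl[b(\vec{v},\vec{v},\vec{w}_h) - b(\vec{v}_h,\vec{v}_h,\vec{w}_h)\bigr] - (q-\Pi_h^Y q,\div\vec{w}_h)_\Omega\,.
\end{align*}
The pressure contribution is estimated by Young's inequality with the shifted pair $(\phi_{\vert\D{v}\vert},(\phi_{\vert\D{v}\vert})^*)$ and the pointwise bound $\vert\div\vec{w}_h\vert\le c\,\vert\D{w}_h\vert$, producing $c\,\rho_{(\phi_{\vert\D{v}\vert})^*,\Omega}(q-\Pi_h^Y q) + \epsilon\,\|\vec{F}(\D{v}_h)-\vec{F}(\D{v})\|_{2,\Omega}^2 + c\,h^2\|\nabla\vec{F}(\D{v})\|_{2,\Omega}^2$; the modular of the pressure interpolation error is then bounded by $c\,\rho_{(\phi_{\vert\D{v}\vert})^*,\Omega}(h\,\nabla q)$ via the local stability of $\Pi_h^Y$ from Assumption~\ref{assum:fem:projection_y}, an elementwise Poincar\'e estimate, and another application of Lemma~\ref{lem:shift_change}.

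The convective difference is linearized by bilinearity in the first two arguments as
\begin{align*}
b(\vec{v},\vec{v},\vec{w}_h) - b(\vec{v}_h,\vec{v}_h,\vec{w}_h) = b(\vec{e},\vec{v},\vec{w}_h) + b(\vec{v}_h,\vec{e},\vec{w}_h)\,,
\end{align*}
with $\vec{e}\coloneqq\vec{v}-\vec{v}_h$. Writing $\vec{w}_h = -\vec{e}+\vec{\eta}_h$ and exploiting the skew-symmetry identity $b(\vec{v}_h,\vec{e},\vec{e}) = \tfrac{1}{2}(g_1\vec{v}_h,\vec{e})_\Omega$ from Lemma~\ref{lem:fem:admissible}, the dangerous quadratic-in-$\vec{e}$ part reduces to $-b(\vec{e},\vec{v},\vec{e})-\tfrac{1}{2}(g_1\vec{v}_h,\vec{e})_\Omega$, plus cross-terms $b(\vec{e},\vec{v},\vec{\eta}_h)$ and $b(\vec{v}_h,\vec{e},\vec{\eta}_h)$ of $O(h)$ interpolation type. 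H\"older's inequality with exponents $(r\d,r\d,(r\d/2)')$, combined with the Sobolev embedding and Korn's inequality applied after splitting $\vec{e}$ along the lifts $\vec{g},\vec{g}_h$, yields the key bound $|b(\vec{e},\vec{v},\vec{e})| \le c\,\|\vec{v}\|_{1,(r\d/2)',\Omega}\,\|\vec{F}(\D{v}_h)-\vec{F}(\D{v})\|_{2,\Omega}^2$ plus lower-order contributions; the remaining cross- and $g_1$-terms are treated by H\"older and Young and give further absorbable $\epsilon$-terms and $h^2$-terms. Choosing both $\epsilon$ and $c_0$ small enough finally allows all $\|\vec{F}(\D{v}_h)-\vec{F}(\D{v})\|_{2,\Omega}^2$-contributions on the right-hand side to be absorbed into the left-hand side, leaving precisely \eqref{eq:fem:error_vel}. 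The main obstacle is exactly this convective term: as in the classical Navier--Stokes setting, closing the estimate is impossible without the smallness assumption \eqref{eq:fem:error_smallness}, and additional technical care is required to align the various shifts of $\phi$ via Lemma~\ref{lem:shift_change} and to propagate the inhomogeneous divergence and Dirichlet data through the lifts $\vec{g},\vec{g}_h$ and the Temam-modified form $b$.
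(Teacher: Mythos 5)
Your overall strategy mirrors the paper's: decompose $\vec{e}_h=\vec{w}_h+\vec{\eta}_h$, test the error equation with the discretely divergence-free $\vec{w}_h$, insert a discrete pressure for free, and treat the convective term using consistency, skew-symmetry, and the smallness assumption \eqref{eq:fem:error_smallness}. The pressure- and interpolation-error parts are handled exactly as in the paper.

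There is, however, a genuine algebraic gap in your treatment of the convective term. You write ``The convective difference is linearized by bilinearity in the first two arguments'' to obtain $b(\vec{v},\vec{v},\vec{w}_h)-b(\vec{v}_h,\vec{v}_h,\vec{w}_h)=b(\vec{e},\vec{v},\vec{w}_h)+b(\vec{v}_h,\vec{e},\vec{w}_h)$. But $b$ defined in \eqref{def:b} is \emph{not} linear in its second argument: the additional term $\tfrac12(g_1\vec{u},\vec{w})_\Omega$ there is independent of the second argument, so $b$ is merely affine in that slot. The correct telescoping is $b(\vec{v},\vec{v},\vec{w}_h)-b(\vec{v}_h,\vec{v}_h,\vec{w}_h)=b(\vec{e},\vec{v},\vec{w}_h)+\tilde b(\vec{v}_h,\vec{e},\vec{w}_h)$, and your expression differs from the truth by exactly $\tfrac12(g_1\vec{v}_h,\vec{w}_h)_\Omega$. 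Downstream, this is what produces your ``dangerous'' term $-\tfrac12(g_1\vec{v}_h,\vec{e})_\Omega$, which you label as part of the quadratic-in-$\vec{e}$ contribution. It is not quadratic: it is linear in $\vec{e}$, scales like $\|g_1\|_s\,\|\vec{v}_h\|\,\|\vec{e}\|$, and therefore cannot be absorbed by choosing $\epsilon$ or $c_0$ small, nor does it yield the required rate. The paper avoids this by working with $\tilde b$ (which \emph{is} trilinear and skew-symmetric in the last two slots, so $\tilde b(\vec{v}_h,\cdot,\cdot)$ applied to the same argument vanishes identically) and by peeling off the $g_1$-contribution explicitly as $-\tfrac12(g_1\vec{e}_h,\Pi_h^X\vec{e}_h)_\Omega$, which is genuinely quadratic in the error (see the decomposition in the paper's $I_h^{21},I_h^{22},I_h^{23}$). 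If you either replace every instance of $b$ acting on a difference in the second slot by $\tilde b$, or first split $b=\tilde b+\tfrac12(g_1\cdot_1,\cdot_3)_\Omega$ and manipulate only $\tilde b$, the spurious linear term disappears, every remaining piece is either quadratic in $\vec{e}$ or carries an $\vec{\eta}_h=O(h)$ factor, and the argument closes as you describe.
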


\begin{proof}
	Since the Problems (\hyperlink{Q}{Q}), (\hyperlink{Qh}{Q$_h$}) do not depend on the particular choice~of~$\vec{g}_b$, we may assume that $\vec{g}_b = \vec{v}$ for the remainder of the proof.
	Then, abbreviating 
	\begin{align*}
		\begin{aligned}
			\vec{e}_h \coloneqq \vec{v}_h - \vec{v} \in X\,,\qquad
			\vec{r}_h \coloneqq \vec{u}_h - \vec{u} = \vec{u}_h \in V\,,
		\end{aligned}
	\end{align*}we arrive at the decomposition
	\begin{align}\label{eq:fem:decomp_eh} 
		\vec{e}_h = \Pi_h^X \!\vec{r}_h + \Pi_h^X \vec{v} - \vec{v}\quad\text{ in }W^{1,p}(\Omega)\,.
	\end{align}
	Due \hspace*{-0.1mm}to \hspace*{-0.1mm}\eqref{eq:fem:decomp_eh}, \hspace*{-0.1mm}\eqref{eq:growth_S}, \hspace*{-0.1mm}\eqref{eq:fem:orliczyoung}, \hspace*{-0.1mm}and \hspace*{-0.1mm}the \hspace*{-0.1mm}approximation \hspace*{-0.1mm}properties \hspace*{-0.1mm}of \hspace*{-0.1mm}$\Pi_h^X$ \hspace*{-0.5mm}(cf.\ \hspace*{-0.1mm}\cite[Thms.~\hspace*{-0.1mm}3.4,~\hspace*{-0.1mm}5.1]{BBDR12_fem}), we have that 
	\begin{align}\begin{aligned} \label{eq:fem:error_1}
			c\, \|\vec{F}(\D{v}_h) - \vec{F}(\D{v})\|_{2,\Omega}^2 
			&\leq (\vec{S}(\D{v}_h) - \vec{S}(\D{v}), \D{e}_h)_\Omega \\
			&\leq (\vec{S}(\D{v}_h) - \vec{S}(\D{v}), \vec{D}\Pi_h^X \!\vec{r}_h)_\Omega 
			\\&\quad +c_{\epsilon}\, \|\vec{F}(\D{v})-\vec{F}(\D{}\Pi_h^X\vec{v})\|_{2,\Omega}^2\\&\quad + \epsilon \,\|\vec{F}(\D{v}_h) - \vec{F}(\D{v})\|_{2,\Omega}^2
			\\&\leq (\vec{S}(\D{v}_h) - \vec{S}(\D{v}), \vec{D}\Pi_h^X \!\vec{r}_h)_\Omega\\& \quad +c_\epsilon \, h^2\, \|\nabla \vec{F}(\D{v})\|_{2,\Omega}^2\\&\quad +
			\epsilon \,\|\vec{F}(\D{v}_h) - \vec{F}(\D{v})\|_{2,\Omega}^2 \,.
	\end{aligned}\end{align}
	Subtracting the first equations in Problem (\hyperlink{Q}{Q}) and Problem (\hyperlink{Qh}{Q$_h$}), for every $\vec{z}_h \in V_h$, yields the error equation
	\begin{align}
		\begin{aligned} \label{eq:fem:error_equation}
			(\vec{S}(\D{v}_h) - \vec{S}(\D{v}), \D{z}_h)_\Omega
			= ((q_h - q)\vec{I}, \D{z}_h)_\Omega + [b(\vec{v}, \vec{v}, \vec{z}_h) - b(\vec{v}_h, \vec{v}_h, \vec{z}_h)]\,.
		\end{aligned}
	\end{align}
	In particular, for  $\vec{z}_h=\Pi_h^X \!\vec{r}_h \in V_h$ in \eqref{eq:fem:error_equation}, we deduce that
	\begin{align}
		\begin{aligned} \label{eq:fem:error_2}
			(\vec{S}(\D{v}_h) - \vec{S}(\D{v}), \vec{D}\Pi_h^X \!\vec{r}_h)_\Omega 
			&= ((q_h - q)\boldsymbol{I}, \boldsymbol{D} \Pi_h^X \!\vec{r}_h)_\Omega \\&\quad + [b(\vec{v}, \vec{v}, \Pi_h^X \!\vec{r}_h) - b(\vec{v}_h, \vec{v}_h, \Pi_h^X \!\vec{r}_h)]
			\\&	\eqqcolon I_h^1 + I_h^2\,.
		\end{aligned}
	\end{align}
	Thus, it is left to estimate the terms $I_h^1$ and $I_h^2$:
	
	\textit{ad $I_h^1$.} Let $\eta_h\hspace*{-0.1em} \in\hspace*{-0.1em} Q_h$ be arbitrary, but fixed. \hspace*{-0.1em}The divergence constraints~\eqref{eq:fem:q_div},~\eqref{eq:fem:qh_div}, the vanishing mean values of $q_h, \eta_h \in Q_h$, the $\epsilon$-Young inequality \eqref{eq:fem:orliczyoung}~with~$\psi=\varphi_{\vert \D{v}\vert}$, \eqref{eq:growth_S}, and the approximation properties of $\Pi_h^X$ (cf.\ \cite[Thms.\ 3.4,  5.1]{BBDR12_fem}) yield that
	\begin{align}
		\begin{aligned} \label{eq:fem:error_2.1}
			\vert I_h^1\vert 
			&= \vert ((\eta_h - q)\boldsymbol{I},  \boldsymbol{D} ( \vec{v} - \Pi_h^X \vec{v} ) )_\Omega+((\eta_h - q)\boldsymbol{I},  \boldsymbol{D}\vec{e}_h)_\Omega \vert  \\
			&\leq c_{\epsilon}\,  \rho_{(\phi_{\abs{\D{v}}})\d,\Omega}(\eta_h - q)+c\,  \|\vec{F}(\D{v}) - \vec{F}(\vec{D}\Pi_h^X \vec{v} )\|_{2,\Omega}^2
			\\&\quad+ \epsilon\,   c\,  \|\vec{F}(\D{v}_h) - \vec{F}(\D{v})\|_{2,\Omega}^2
			\\	&\leq c_{\epsilon}\,  \rho_{(\phi_{\abs{\D{v}}})\d,\Omega}(\eta_h - q)+c \, h^2\,  \|\nabla \vec{F}(\D{v})\|_{2,\Omega}^2
			\\&\quad+ \epsilon\,   c\,  \|\vec{F}(\D{v}_h) - \vec{F}(\D{v})\|_{2,\Omega}^2\,.
		\end{aligned}
	\end{align}
	For $\eta_h \coloneqq \Pi_h^Y q - \mean{\Pi_h^Y q}_{\Omega} \in Q_h$ in \eqref{eq:fem:error_2.1}, using Lemma \ref{lem:fem:comparable_qh} and the approximation~properties of $\Pi_h^Y$ (cf.\ \cite[Lem.\ 5.4]{BBDR12_fem}), we obtain
	\begin{align}\label{eq:fem:appr_pressure}
		\begin{aligned}
			\vert I_h^1\vert 
			&\leq c_{\epsilon} \, \rho_{(\phi_{\abs{\D{v}}})\d,\Omega}(h \nabla q) + c\,  h^2 \, \|\nabla \vec{F}(\D{v})\|_{2,\Omega}^2
			\\&\quad+ \epsilon\, c \,  \|\vec{F}(\D{v}_h) - \vec{F}(\D{v})\|_{2,\Omega}^2\,.
		\end{aligned}
	\end{align}
	Before we move on to estimate Temam's modification, we note that  
	Poincaré's inequality, Korn's inequality, the $W^{2,r}$-approximation properties of $\Pi_h^X$ (cf.\ \cite[Thm.~4.6]{DR07_interpolation}), and that $\|\nabla^2 \vec{v}\|_{r,\Omega}\leq \|\nabla\vec{F}(\nabla \vec{v})\|_{2,\Omega}^2(1+\|\D{v}\|_{r,\Omega})^{2-r}$ (cf. \cite[Lem.\ 4.5]{BDR10_strongsol}) yield that\enlargethispage{2mm}
	\begin{align}\label{eq:fem:eh_1p}
		\begin{aligned}
			\norm{\vec{e}_h}_{1,r,\Omega}
			&\leq c\, \|\D{r}_h\|_{r,\Omega} + \|{\Pi_h^X} \vec{v} - \vec{v}\|_{1,r,\Omega}\\
			&\leq c\,\|\D{e}_h\|_{r,\Omega} + c \,h \,\|\nabla^2 \vec{v}\|_{r,\Omega} \\
			&\leq c\,\|\D{e}_h\|_{r,\Omega} + c \,h \,\|\nabla\vec{F}(\nabla \vec{v})\|_{2,\Omega}(1+\|\D{v}\|_{r,\Omega})^{\smash{\frac{2-r}{2}}}\,.
		\end{aligned}
	\end{align}
	
	\textit{ad $I_h^2$.}
	Using that $\Pi_h^X \!\vec{r}_h = \Pi_h^X\!\vec{e}_h$ and the skew-symmetry of $\tilde{b}\colon (X)^2\to X^s\to \mathbb{R}$, we arrive at the decomposition
	\begin{align}\label{eq:fem:error_4}
		\begin{aligned}
			I_h^2
			&= \tilde{b}(\vec{v}, \vec{v} - \Pi_h^X \vec{v}, \Pi_h^X\!\vec{e}_h)
			- \tilde{b}(\vec{e}_h, \Pi_h^X \vec{v}, \Pi_h^X\!\vec{e}_h)
			- \tfrac{1}{2} (g_1 \vec{e}_h, \Pi_h^X\!\vec{e}_h)
			\\&\eqqcolon I_h^{21}+ I_h^{22}+ I_h^{23}\,.
		\end{aligned}
	\end{align}
	Thus,  it is left to estimate the terms $I_h^{21}$, $I_h^{22}$, and $I_h^{23}$:
	
	\textit{ad $I_h^{21}$.}
	The regularity Assumption \ref{assum:fem:regularity} and $r\ge (r^*)'$ (due to $p\ge \frac{2d}{d+1}$) yield~that $\vec{v} \in W^{2,(r^*)'}(\Omega)\cap L^\infty(\Omega)$  (cf. \cite[Lem.\ 2.6]{KR23_ldg2}, \cite[Lem.\ 4.5]{BDR10_strongsol}). Using  Hölder's inequality, Sobolev's embedding theorem, 
	the $W^{2,(r\d)'}$-approximation property of $\Pi_h^X$ (cf.\ \cite[Thm.\ 4.6]{DR07_interpolation}),
	the $W^{1,r}$-stability~of $\Pi_h^X$ (cf.\ \cite[Thm.\ 3.2]{BBDR12_fem}), \eqref{eq:fem:eh_1p}, that $ \|\nabla^2\vec{v}\|_{2r/(2-p+r),\Omega} ^2\leq c\,\|\nabla \vec{F}(\D{v})\|_{2,\Omega}^2(\delta+\|\D{v}\|_{r,\Omega})^{2-p}$ (cf. \cite[Lem.\ 4.5]{BDR10_strongsol}) together with $(r^*)'\leq 2r/(2-p+r)$ if $p\hspace*{-0.15em}<\hspace*{-0.15em}2$ \hspace*{-0.1mm}(due \hspace*{-0.1mm}to \hspace*{-0.1mm}$p\hspace*{-0.15em}\ge\hspace*{-0.15em} \frac{2d}{d+1}$), \hspace*{-0.1mm}that \hspace*{-0.1mm}$ \|\nabla^2\vec{v}\|_{2,\Omega} ^2\hspace*{-0.15em}\leq\hspace*{-0.15em} c\,\|\nabla \vec{F}(\D{v})\|_{2,\Omega}^2$ \hspace*{-0.1mm}together~\hspace*{-0.1mm}with~\hspace*{-0.1mm}${(r^*)'\hspace*{-0.15em}\leq\hspace*{-0.15em} 2}$~\hspace*{-0.1mm}if~\hspace*{-0.1mm}${p\hspace*{-0.15em}\ge\hspace*{-0.15em} 2}$, and the $\epsilon$-Young inequalities~\eqref{eq:fem:orliczyoung}~with~${\psi=\vert \cdot\vert^2}$,~we~get\enlargethispage{5mm}
	\begin{align}
		\label{eq:fem:error_5}
		\begin{aligned}
			\vert I_h^{21}\vert 
			&\leq c\, \|\vec{v}\|_{\infty,\Omega}  \|\vec{v} - \Pi_h^X \vec{v}\|_{1,(r\d)',\Omega} \|\Pi_h^X\vec{e}_h\|_{r\d,\Omega} \\
			&\leq c\, h\,\|\vec{v}\|_{\infty,\Omega}  \|\nabla^2\vec{v}\|_{(r\d)',\Omega} \|\vec{e}_h\|_{1,r,\Omega}\\
			&\leq c_{\epsilon}\,h^2\,\|\vec{v}\|_{\infty,\Omega} ^2\,\|\nabla \vec{F}(\D{v})\|_{2,\Omega}^2(\delta+\|\D{v}\|_{r,\Omega})^{2-p}
			\\&\quad+ \epsilon\, c\,(\|\D{e}_h\|_{r,\Omega}^2+h^2\,\|\nabla\vec{F}(\nabla \vec{v})\|_{2,\Omega}^2(1+\|\D{v}\|_{r,\Omega})^{2-r})\,.
		\end{aligned}
	\end{align}
	
	\textit{ad $I_h^{22}$.} Using  Hölder's inequality, the $W^{1,(r\d/2)'}$-stability of $\Pi_h^X$ (cf.\ \cite[Thm.\ 3.2]{BBDR12_fem}), Sobolev's embedding theorem, and \eqref{eq:fem:eh_1p}, we deduce that
	\begin{align}
		\label{eq:fem:error_6}
		\begin{aligned}
			\vert I_h^{22}\vert 
			&	\leq c\, \|\vec{e}_h\|_{r\d,\Omega} \|\Pi_h^X \vec{v}\|_{(r\d/2)',\Omega}\|\Pi_h^X \vec{e}_h\|_{1,r,\Omega}\\
			&	\leq c\, \|\vec{v}\|_{1,(r\d/2)',\Omega}\|\vec{e}_h\|_{1,r,\Omega}^2
			\\&	\leq c \,\|\vec{v}\|_{1,(r\d/2)',\Omega} (\|\D{e}_h\|_{r,\Omega}^2 + h^2\, \|\nabla\vec{F}(\nabla \vec{v})\|_{2,\Omega}^2(1+\|\D{v}\|_{r,\Omega})^{2-r})\,.
		\end{aligned}
	\end{align}

	\textit{ad $I_h^{23}$.} Using  Hölder's inequality, Sobolev's embedding theorem, that $\textup{div}\,\vec{v}=g_1$ in $L^p(\Omega)$, and \eqref{eq:fem:eh_1p}, we find that
	\begin{align}
		\label{eq:fem:error_7}
		\begin{aligned}
			\vert I_h^{23}\vert &\leq 
			c\, \|g_1\|_{(r\d/2)',\Omega}\|\vec{e}_h\|_{r\d,\Omega}\|\Pi_h^X\vec{e}_h\|_{r\d,\Omega}\\
			&	\leq c\,  \|\vec{v}\|_{1,(r\d/2)',\Omega}\|\vec{e}_h\|_{1,r,\Omega}^2
			\\
			&	\leq c\,  \|\vec{v}\|_{1,(r\d/2)',\Omega}(\|\D{e}_h\|_{r,\Omega}^2 + h^2 \,\|\nabla\vec{F}(\nabla \vec{v})\|_{2,\Omega}^2(1+\|\D{v}\|_{r,\Omega})^{2-r})\,.
		\end{aligned}
	\end{align}
	As a result, combining  \eqref{eq:fem:error_5}--\eqref{eq:fem:error_7} in \eqref{eq:fem:error_4}, we arrive at
	\begin{align}
		\label{eq:fem:error_8}
		\begin{aligned}
			\vert I_h^2\vert& \leq  c_{\epsilon}\,h^2\,\|\vec{v}\|_{\infty,\Omega} ^2\,\|\nabla \vec{F}(\D{v})\|_{2,\Omega}^2(\delta+\|\D{v}\|_{r,\Omega})^{2-p} \\&\quad +c\,\|\vec{v}\|_{1,(r\d/2)',\Omega}\,h^2\,\|\nabla\vec{F}(\nabla \vec{v})\|_{2,\Omega}^2(1+\|\D{v}\|_{r,\Omega})^{2-r}\\&\quad+(\varepsilon+\|\vec{v}\|_{1,(r\d/2)',\Omega} )\,\|\D{e}_h\|_{r,\Omega}^2 \,.
		\end{aligned}
	\end{align}
	Due to \cite[Lem.\ 4.1]{BDR10_strongsol}, 	if $p \leq 2$, then  we have that
	\begin{align}\label{eq:fem:error_9}
		\begin{aligned}
			\|\D{e}_h\|_{p,\Omega}^2
			&\leq c\, \|\vec{F}(\D{v}_h) - \vec{F}(\D{v})\|_{2,\Omega}^2\|\delta + \vert\D{v}\vert + \vert\D{v}_h\vert \|_{p,\Omega}^{2-p} \\
			&\leq c \,\|\vec{F}(\D{v}_h) - \vec{F}(\D{v})\|_{2,\Omega}^2 \|\delta +\vert\D{v}\vert\|_{p,\Omega}^{2-p}\,.
		\end{aligned}
	\end{align}
	Similar to \cite[Eq. (4.50)]{KR23_ldg2},   if $p > 2$, then we have that 
	\begin{align}\label{eq:fem:error_10}
		\begin{aligned}
			\|\D{e}_h\|_{2,\Omega}^2
			\leq c\, \delta^{2-p}\, \|\vec{F}(\D{v}_h) - \vec{F}(\D{v})\|_{2,\Omega}^2\,.
		\end{aligned}
	\end{align}
	Thus, combining \eqref{eq:fem:error_9} and \eqref{eq:fem:error_10}, in any case, we have that
	\begin{align} \label{eq:fem:error_11}
		\|\D{e}_h\|_{r,\Omega}^2
		\leq (\max\{\delta, \|\delta +\vert\D{v}\vert\|_{p,\Omega}\})^{2-p}\, \|\vec{F}(\D{v}_h) - \vec{F}(\D{v})\|_{2,\Omega}^2\,.
	\end{align}
	Putting everything together, if we insert \eqref{eq:fem:appr_pressure} and \eqref{eq:fem:error_8} in \eqref{eq:fem:error_2}, using \eqref{eq:fem:error_11}~in~doing~so, we deduce that
	\begin{align}
			(\vec{S}(\D{v}_h) - \vec{S}(\D{v}), \vec{D}\Pi_h^X \!\vec{r}_h)_\Omega 
			&\leq c_{\epsilon} \, (\rho_{(\phi_{\abs{\D{v}}})\d,\Omega}(h \nabla q)+h^2\, \|\nabla \vec{F}(\D{v})\|_{2,\Omega}^2\notag \\
			&\quad+c\,(\varepsilon+\|\vec{v}\|_{1,(r\d/2)',\Omega})\, \|\vec{F}(\D{v}_h) - \vec{F}(\D{v})\|_{2,\Omega}^2\,.	\label{eq:fem:error_12}
	\end{align}
	Then, if we insert \eqref{eq:fem:error_12} into \eqref{eq:fem:error_1},  we obtain
	\begin{align*}
		\|\vec{F}(\D{v}_h) - \vec{F}(\D{v})\|_{2,\Omega}^2 
		&\leq c_{\epsilon} \,h^2\, \|\nabla \vec{F}(\D{v})\|_{2,\Omega}^2
		+ c_{\epsilon} \, (\rho_{(\phi_{\abs{\D{v}}})\d,\Omega}(h \nabla q)
		\\&\quad+c\,(\varepsilon+\|\vec{v}\|_{1,(r\d/2)',\Omega}) \|\vec{F}(\D{v}_h) - \vec{F}(\D{v})\|_{2,\Omega}^2\,.
	\end{align*}
	Eventually, choosing $\varepsilon>0$ small enough and a constant $c_0>0$ small enough,~from~\eqref{eq:fem:error_smallness}, it follows the claimed a priori error estimate for the velocity vector field \eqref{eq:fem:error_vel}.\enlargethispage{7mm}
\end{proof}

Analogously to \cite[Thm.~2.14]{BBDR12_fem} and \cite[Cor.~4.2]{KR23_ldg2}, from Theorem \ref{prop:fem:error}, we derive a priori error estimates for the velocity vector field yielding explicit error decay rates.
\begin{cor} \label{cor:fem:error_vel}
	Let the assumptions of Theorem~\ref{prop:fem:error} be satisfied. Then, it holds that
	\begin{align*}
		\begin{aligned}
			\|\vec{F}(\D{v}_h) - \vec{F}(\D{v})\|_{2,\Omega}^2
			\leq c\, h^2 \|\nabla \vec{F}(\D{v})\|_{2,\Omega}^2 + c\, h^{\min\{2,p'\}}\,\rho_{(\phi_{\abs{\D{v}}})\d,\Omega}(\nabla q)\,,
		\end{aligned}
	\end{align*}
	where $c > 0$ is a constant depending only on the characteristics of $\vec{S}$, $\delta^{-1}$, $\omega_0$,~$m$,~$k$, $\|\vec{v}\|_{\infty,\Omega}$, $\|\D{v}\|_{p,\Omega}$, $c_0$, and $\Omega$.
	If $p > 2$ and, in addition, $\vec{f} \in L^2(\Omega)$, then it holds that
	\begin{align*}
		\begin{aligned}
			\|\vec{F}(\D{v}_h) - \vec{F}(\D{v})\|_{2,\Omega}^2
			\leq c\, h^2\|\nabla \vec{F}(\D{v})\|_{2,\Omega}^2+c\,h^2\,\|(\delta+\vert \D{v}\vert)^{\frac{p-2}{2}}\nabla q\|_{2,\Omega}^2\,,
		\end{aligned}
	\end{align*}
	where $c > 0$ is a constant depending only on the characteristics of $\vec{S}$, $\delta^{-1}$, $\omega_0$,~$m$,~$k$, $\|\vec{v}\|_{\infty,\Omega}$, $\|\D{v}\|_{p,\Omega}$, $c_0$, and $\Omega$.
\end{cor}

\begin{theorem}[pressure error estimate] \label{prop:fem:error_pres}
	Let the assumptions of Theorem~\ref{prop:fem:error} be satisfied. Then, it holds that
	\begin{align} \label{eq:fem:error_pres}
		\begin{aligned}
			\|q_h - q\|_{s',\Omega}
			&\leq c \,h\,(\|\nabla\vec{F}(\nabla \vec{v})\|_{2,\Omega} +\|\nabla q\|_{s'})\\&\quad + c\,\|\vec{F}(\D{v}_h) - \vec{F}(\D{v})\|_{2,\Omega}^{\min \{1, 2/p'\}}\,,
		\end{aligned}
	\end{align}
	where $c > 0$ is a constant depending only on the characteristics of $\vec{S}$, $\delta^{-1}$, $\omega_0$,~$m$,~$k$, $\|\vec{v}\|_{\infty,\Omega}$, $\|\D{v}\|_{p,\Omega}$, $c_0$, and $\Omega$.\enlargethispage{6mm}
\end{theorem}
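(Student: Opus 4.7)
The plan is to exploit the discrete inf-sup stability to reduce the control of $\|q_h-q\|_{s',\Omega}$ to estimating a duality pairing against discrete velocity test functions, and then rewrite that pairing via the error equation \eqref{eq:fem:error_equation} and bound it using the tools already collected in the proof of Theorem \ref{prop:fem:error}.

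First I introduce the discrete pressure approximation $\eta_h \coloneqq \Pi_h^Y\! q - \mean{\Pi_h^Y\! q}_\Omega \in Q_h$ and split by the triangle inequality
\begin{align*}
\|q_h - q\|_{s',\Omega} \leq \|q_h - \eta_h\|_{s',\Omega} + \|\eta_h - q\|_{s',\Omega}\,.
\end{align*}
The second summand contributes $c\, h\,\|\nabla q\|_{s',\Omega}$ through Lemma \ref{lem:fem:comparable_qh} together with the approximation properties of $\Pi_h^Y$ in Lebesgue scales (cf.\ \cite[Lem.\ 5.4]{BBDR12_fem}). For the first summand, since $q_h - \eta_h \in Q_h \subseteq L^{p'}_0(\Omega)$ and $s' \leq p'$, the continuous embedding $L^{p'}(\Omega) \hookrightarrow L^{s'}(\Omega)$ on the bounded domain $\Omega$ together with the discrete inf-sup condition (Lemma \ref{lem:fem:discrete-inf-sup}) yields
\begin{align*}
\|q_h - \eta_h\|_{s',\Omega} \leq c\, \|q_h - \eta_h\|_{p',\Omega} \leq c \sup_{\vec{z}_h \in V_h,\,\|\vec{z}_h\|_{1,p,\Omega}\leq 1} (q_h - \eta_h, \div \vec{z}_h)_\Omega\,.
\end{align*}

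Next I would decompose $(q_h - \eta_h, \div \vec{z}_h)_\Omega = (q_h - q, \div \vec{z}_h)_\Omega + (q - \eta_h, \div \vec{z}_h)_\Omega$; the second pairing is at most $c\, h\,\|\nabla q\|_{p',\Omega}$ by Hölder's inequality and the approximation properties of $\Pi_h^Y$. The principal pairing is rewritten via the error equation \eqref{eq:fem:error_equation} (valid for $\vec{z}_h \in V_h$) and the trilinearity of $\widetilde{b}$ as
\begin{align*}
(q_h - q, \div \vec{z}_h)_\Omega = (\vec{S}(\D{v}_h) - \vec{S}(\D{v}), \D{z}_h)_\Omega + \widetilde{b}(\vec{e}_h,\vec{v},\vec{z}_h) + \widetilde{b}(\vec{v}_h,\vec{e}_h,\vec{z}_h) + \tfrac{1}{2}(g_1 \vec{e}_h,\vec{z}_h)_\Omega\,,
\end{align*}
with $\vec{e}_h \coloneqq \vec{v}_h - \vec{v}$. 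The stress contribution is estimated by Hölder's inequality together with the $(p,\delta)$-structure of $\vec{S}$: Proposition \ref{lem:growth_SF} yields $\|\vec{S}(\D{v}_h) - \vec{S}(\D{v})\|_{p',\Omega}^{\max\{p',2\}} \leq c\,\|\vec{F}(\D{v}_h) - \vec{F}(\D{v})\|_{2,\Omega}^2$ (up to factors depending on $\delta^{-1}$ and $\|\D{v}\|_{p,\Omega}$), which is equivalent to $\|\vec{S}(\D{v}_h) - \vec{S}(\D{v})\|_{p',\Omega} \leq c\,\|\vec{F}(\D{v}_h) - \vec{F}(\D{v})\|_{2,\Omega}^{\min\{1,2/p'\}}$, exactly producing the exponent in the target estimate. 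The convective pieces are handled with Hölder's, Sobolev's and Korn's inequalities, using the regularity $\vec{v} \in W^{1,(r\d/2)'}(\Omega) \cap L^\infty(\Omega)$ from Assumption \ref{assum:fem:regularity}, the identity $\div \vec{v} = g_1$, and \eqref{eq:fem:error_11} to exchange $\|\D{e}_h\|_{r,\Omega}$ for $\|\vec{F}(\D{v}_h) - \vec{F}(\D{v})\|_{2,\Omega}$; the interpolation decomposition \eqref{eq:fem:eh_1p} then feeds in the additional $h\,\|\nabla \vec{F}(\D{v})\|_{2,\Omega}$ contribution.

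Collecting all contributions and taking the supremum over admissible $\vec{z}_h$ produces the claimed bound \eqref{eq:fem:error_pres}. The main obstacle will be producing the sharp exponent $\min\{1,2/p'\}$ on $\|\vec{F}(\D{v}_h) - \vec{F}(\D{v})\|_{2,\Omega}$, which requires distinguishing the sub- and super-quadratic regimes and invoking Proposition \ref{lem:growth_SF} (together with the shift-change Lemma \ref{lem:shift_change} when absorbing the shifted modular into an unshifted $L^{p'}$-norm of $\vec{S}(\D{v}_h)-\vec{S}(\D{v})$). A secondary technical point is the mismatch between the $L^{p'}$-based inf-sup and the $L^{s'}$-norm on the left-hand side, which is resolved cleanly via the embedding $L^{p'}(\Omega) \hookrightarrow L^{s'}(\Omega)$ on the bounded domain $\Omega$.
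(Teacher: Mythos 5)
Your overall architecture matches the paper's proof: introduce $\eta_h=\Pi_h^Y q-\mean{\Pi_h^Y q}_\Omega$, split by the triangle inequality, reduce the first term to a discrete inf-sup supremum of the pairing $(q_h-q,\div\vec{z}_h)_\Omega$, rewrite that pairing via the error equation \eqref{eq:fem:error_equation} and trilinearity, and estimate the three resulting contributions. However, there is one genuine gap that makes the proposal fail for part of the admissible range of $p$, and one smaller technical soft spot.

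The genuine gap concerns which Sobolev norm normalizes the test functions in the inf-sup step. You invoke the inf-sup condition with $\|\vec{z}_h\|_{1,p,\Omega}\le 1$ and then recover $L^{s'}$ via the embedding $L^{p'}(\Omega)\hookrightarrow L^{s'}(\Omega)$, stating this resolves the ``mismatch.'' But the mismatch is not on the pressure side --- it is on the test-function side. The paper deliberately uses the $s$-based inf-sup with $\|\vec{z}_h\|_{1,s,\Omega}\le 1$ so that $\vec{z}_h$ has the \emph{higher} Sobolev integrability needed to close the estimate of the convective contribution (precisely the term $\tilde{b}(\vec{v}_h,\Pi_h^X\vec{e}_h,\vec{z}_h)$, which produces two factors with only $W^{1,p}$-regularity). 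With $\|\vec{z}_h\|_{1,p,\Omega}\le 1$ one only has $\vec{z}_h\in L^{p\d}$, and the H\"older bookkeeping for the trilinear piece then requires $\tfrac{2}{p\d}+\tfrac{1}{p}\le 1$, i.e.\ $p\ge\tfrac{3d}{d+2}$. For $p\in[\tfrac{2d}{d+1},\tfrac{3d}{d+2})$ --- where $s=(p\d/2)'>p$ --- the estimate is not available with $W^{1,p}$-normalized test functions, so your argument does not cover the full range claimed in the theorem. This is exactly why the pressure error is stated in the weaker $L^{s'}$-norm in the first place (cf.\ the remark after Theorem~\ref{prop:fem:error_pres}); it is not a cosmetic choice that can be upgraded by embeddings.

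The smaller technical point concerns the stress term $I_h^1$. You pair $\vec{S}(\D{v}_h)-\vec{S}(\D{v})$ with $\D{z}_h$ using the unshifted $L^{p'}$--$L^p$ duality and then assert $\|\vec{S}(\D{v}_h)-\vec{S}(\D{v})\|_{p',\Omega}\le c\,\|\vec{F}(\D{v}_h)-\vec{F}(\D{v})\|_{2,\Omega}^{\min\{1,2/p'\}}$. The paper instead stays in the shifted Orlicz framework: it proves $\|\D{z}_h\|_{\phi_{\abs{\D{v}}},\Omega}\le c$ via the shift-change lemma, controls $\rho_{(\phi_{\abs{\D{v}}})\d,\Omega}(\vec{S}(\D{v}_h)-\vec{S}(\D{v}))$ via Proposition~\ref{lem:growth_SF}, converts the modular bound to a Luxembourg-norm bound by means of \cite[Lem.~4.4]{KR23_ldg3} (which is where the $\min\{1,2/p'\}$ exponent is born), and then applies the generalized H\"older inequality \eqref{eq:gen_hoelder}. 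Your $L^{p'}$-route is morally compatible, but the inequality you need is not an immediate consequence of Proposition~\ref{lem:growth_SF}; establishing it requires the same case distinction and modular-to-norm conversion as the shifted approach, and you would also need the constants to depend on $\delta^{-1}$ and $\|\D{v}\|_{p,\Omega}$ as claimed. You do flag this as an obstacle, but as written it is unsubstantiated. Finally, your trilinear decomposition $\tilde b(\vec{e}_h,\vec{v},\vec{z}_h)+\tilde b(\vec{v}_h,\vec{e}_h,\vec{z}_h)$ is algebraically legitimate but forgoes the insertion of $\Pi_h^X\vec{v}$ that the paper uses to harvest the explicit $h$-factor in front of $\|\nabla\vec{F}(\D{v})\|_{2,\Omega}$; while the remaining terms could probably be absorbed into the $\|\vec{F}(\D{v}_h)-\vec{F}(\D{v})\|_{2,\Omega}$-part, this is worth spelling out.
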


\begin{proof}
	We choose $\eta_h \coloneqq \Pi_h^Y q - \mean{\Pi_h^Y q}_{\Omega} \in Q_h$ as in the proof of Proposition~\ref{prop:fem:error}. Then, by the triangle inequality, the discrete inf-sup stability result (cf.\ Lemma~\ref{lem:fem:discrete-inf-sup}), and Hölder's inequality, we have that
	\begin{align}\label{prop:fem:error_pres.1}
		\begin{aligned}
			\|q_h - q\|_{s',\Omega}
			&\leq \|q_h - \eta_h\|_{s',\Omega} + \|\eta_h - q\|_{s',\Omega} \\
			&\leq c\, \sup_{\vec{z}_h \in V_h\colon \|\vec{z}_h\|_{1,s} \leq 1} {(q_h - \eta_h, \div \vec{z}_h)_\Omega} + \|\eta_h - q\|_{s',\Omega} \\
			&\leq c\, \sup_{\vec{z}_h \in V_h\colon \|\vec{z}_h\|_{1,s} \leq 1} {(q_h - q, \div \vec{z}_h)_\Omega} + c \,\|\eta_h - q\|_{s',\Omega}\,.
		\end{aligned}
	\end{align}
	Next, let $\vec{z}_h \in V_h$ be such that $\|\vec{z}_h\|_{1,s} \leq 1$. Then, resorting to the error equation~\eqref{eq:fem:error_equation}, we arrive at the decomposition
	\begin{align}\label{prop:fem:error_pres.2}
		\begin{aligned}
			(q_h - q, \div \vec{z}_h)_\Omega 
			&= (\vec{S}(\D{v}_h) - \vec{S}(\D{v}), \D{z}_h)_\Omega
			+ \tfrac{1}{2} (g_1 (\vec{v}_h-\vec{v}), \vec{z}_h) _\Omega
			\\&\quad+ [\tilde{b}(\vec{v}_h, \vec{v}_h, \vec{z}_h) - \tilde{b}(\vec{v}, \vec{v}, \vec{z}_h)] \\
			&\eqqcolon I_h^1 + I_h^2 + I_h^3\,,
		\end{aligned}
	\end{align}
	so that is left to estimate the terms $ I_h^1$, $I_h^2$,  and $I_h^3$:
	
	\textit{ad $I_h^1$.} The shift change \eqref{lem:shift_change.1} and $p \leq s$ yield that $\rho_{\phi_{\abs{\D{v}}},\Omega}(\D{z}_h)
	\leq c \,\|\D{z}_h\|_{p,\Omega}^p + c\, \|\delta + \vert \D{v}\vert \|_{p,\Omega}^p
	\leq c$, 
	which,
	appealing to \cite[Lem.\ 4.8.4]{Pick}, implies that
	\begin{align}\label{prop:fem:error_pres.3}
		\smash{	\|\D{z}_h\|_{\phi_{\abs{\D{v}}},\Omega}
			\leq \max \{1, \rho_{\phi_{\abs{\D{v}}},\Omega}(\D{z}_h) \}
			\leq c\,.}
	\end{align}
	On the other hand, by Lemma~\ref{lem:growth_SF}, we have that
	\begin{align*}
		\smash{	\rho_{(\phi_{\abs{\D{v}}})\d,\Omega} (\vec{S}(\D{v}_h) - \vec{S}(\D{v}))
			\leq c\, \|\vec{F}(\D{v}_h) - \vec{F}(\D{v})\|_{2,\Omega}^2\,,}
	\end{align*}
	which, appealing to \cite[Lem.\ 4.4]{KR23_ldg3}, implies that
	\begin{align}\label{prop:fem:error_pres.4}
		\smash{	\|\vec{S}(\D{v}_h) - \vec{S}(\D{v})\|_{(\phi_{\abs{\D{v}}})\d,\Omega}
			\leq c\,\|\vec{F}(\D{v}_h) - \vec{F}(\D{v})\|_{2,\Omega}^{\min\{1,2/p'\}}\,.}
	\end{align}
	Using the generalized Hölder inequality~\eqref{eq:gen_hoelder}, \eqref{prop:fem:error_pres.3}, and \eqref{prop:fem:error_pres.4}, we conclude that
	\begin{align}\label{prop:fem:error_pres.5}
		\begin{aligned}
			\vert I_h^1\vert 
			&\leq 2 \,\|\vec{S}(\D{v}_h) - \vec{S}(\D{v})\|_{(\phi_{\abs{\D{v}}})\d,\Omega} \|\D{z}_h\|_{\phi_{\abs{\D{v}}},\Omega}
			\\&	\leq c \,\|\vec{F}(\D{v}_h) - \vec{F}(\D{v})\|_{2,\Omega}^{\min \{1, 2/p'\}}\,.
		\end{aligned}
	\end{align}
	
	\textit{ad $I_h^2$.} 
	Using Hölder's inequality, Sobolev's embedding theorem, \eqref{eq:fem:eh_1p}, \eqref{eq:fem:error_11}, and $\norm{\vec{z}_h}_{1,s} \leq 1$ together with $s\ge p$, we obtain
	\begin{align}\label{prop:fem:error_pres.6}
		\begin{aligned}
			\vert I_h^2\vert 
			&\leq c \, \|g_1\|_s\|\vec{e}_h\|_{r\d,\Omega}\|\vec{z}_h\|_{p\d}
			\\&\leq c \, \|\vec{F}(\D{v}_h) - \vec{F}(\D{v})\|_{2,\Omega} + c\, h\,\|\nabla\vec{F}(\nabla \vec{v})\|_{2,\Omega}\,.
		\end{aligned}
	\end{align}
	
	\textit{ad $I_h^3$.} Appealing to the decomposition
	\begin{align}\label{prop:fem:error_pres.7}
		\begin{aligned}
			I_h^3
			&= - \tilde{b}(\vec{v}, \vec{v} - \Pi_h^X \vec{v}, \vec{z}_h)
			+ \tilde{b}(\vec{e}_h, \Pi_h^X \vec{v}, \vec{z}_h)
			+ \tilde{b}(\vec{v}_h, \Pi_h^X\!\vec{e}_h, \vec{z}_h)
			\\&	\eqqcolon I_h^{31}+I_h^{33}+I_h^{33}\,,
		\end{aligned}
	\end{align}
	it is enough to estimate the terms $I_h^{31}$, $I_h^{32}$ and $ I_h^{33}$:
	
	\textit{ad $I_h^{31}$.} 
	Using Hölder's inequality, Sobolev's embedding theorem together with $s^*\ge (r^*)'$, and 
	$W^{2,r}$-approximation properties of $\Pi_h^X$ (cf.\ \cite[Thm.~4.6]{DR07_interpolation}), we find that
	\begin{align}\label{prop:fem:error_pres.8}
		\begin{aligned}
			\vert I_h^{31}\vert 
			&\leq \|\vec{v}\|_{\infty,\Omega}  \|\vec{z}_h\|_{1,(r\d)',\Omega} \|\vec{v} - \Pi_h^X \vec{v}\|_{1,r,\Omega} 
			\\&\leq c \,h\,\|\vec{v}\|_{\infty,\Omega}  \|\vec{z}_h\|_{1,s} \|\nabla^2 \vec{v}\|_{r,\Omega} 
			\\&\leq c\, h\,\|\nabla \vec{F}(\D{v})\|_{2,\Omega}\,.
		\end{aligned}
	\end{align}
	
	\textit{ad $I_h^{32}$.} Using Hölder's inequality together with $s\ge 2(r^*)'$, \eqref{eq:fem:eh_1p}, $\norm{\vec{z}_h}_{1,s} \leq 1$, and the $W^{1,r^*}$-stability properties of  $\Pi_h^X$ (cf.\ \cite[Thm.~3.2]{BBDR12_fem}), we find that
	\begin{align}\label{prop:fem:error_pres.9}
		\begin{aligned}
			\vert I_h^{32}\vert 
			&\leq c\, \|\vec{e}_h\|_{r\d,\Omega} \|\vec{z}_h\|_{1,s} \|\Pi_h^X \vec{v}\|_{1, r\d}
			\\&\leq  c\, \|\vec{F}(\D{v}_h) - \vec{F}(\D{v})\|_{2,\Omega} + c\, h\,\|\nabla\vec{F}(\nabla \vec{v})\|_{2,\Omega}\,.
		\end{aligned}
	\end{align}
	
	\textit{ad $I_h^{33}$.}	Using Hölder's inequality together with $s\ge 2(p^*)'$,  \eqref{eq:fem:apriori}, $\norm{\vec{z}_h}_{1,s} \leq 1$, the $W^{1,p}$-stability properties of  $\Pi_h^X$ (cf.\ \cite[Thm.~3.2]{BBDR12_fem}), and \eqref{eq:fem:eh_1p}, we obtain
	\begin{align}\label{prop:fem:error_pres.10}
		\begin{aligned}
			\vert I_h^{33}\vert 
			&\leq c\, \|\vec{v}_h\|_{p\d} \|\vec{z}_h\|_{1,s} \|\Pi_h^X \vec{e}_h\|_{1,p}
			\\&\leq c\, \|\vec{F}(\D{v}_h) - \vec{F}(\D{v})\|_{2,\Omega} + c\, h\,\|\nabla\vec{F}(\nabla \vec{v})\|_{2,\Omega}\,.
		\end{aligned}
	\end{align}
	Putting everything together, combining \eqref{prop:fem:error_pres.8}--\eqref{prop:fem:error_pres.10} in \eqref{prop:fem:error_pres.7}, we arrive at
	\begin{align}\label{prop:fem:error_pres.11}
		\vert I_h^3\vert \leq c\, \|\vec{F}(\D{v}_h) - \vec{F}(\D{v})\|_{2,\Omega} + c\, h\,\|\nabla\vec{F}(\nabla \vec{v})\|_{2,\Omega}\,.
	\end{align}
	Then, combining \eqref{prop:fem:error_pres.5}, \eqref{prop:fem:error_pres.6}, and \eqref{prop:fem:error_pres.11} in \eqref{prop:fem:error_pres.2} yields that
	\begin{align}\label{prop:fem:error_pres.12}
		\vert (q_h - q, \div \vec{z}_h)_\Omega\vert  \leq c\, \|\vec{F}(\D{v}_h) - \vec{F}(\D{v})\|_{2,\Omega} ^{\min\{2,p'\}}+ c\, h\,\|\nabla\vec{F}(\nabla \vec{v})\|_{2,\Omega}\,.
	\end{align}
	On the other hand, using that $\Pi_h^Y\langle q\rangle_\Omega=\langle q\rangle_\Omega$, stability and approximation properties of $\Pi_h^Y\colon Y\to Y_h$ (cf. \cite[Lem.\ 5.2]{BBDR12_fem}), Hölder's inequality, Poincar\'e's inequality, and that $p\leq s$, i.e., $s'\leq p'$ , recalling that $\eta_h \coloneqq \Pi_h^Y q - \mean{\Pi_h^Y q}_{\Omega} \in Q_h$, we observe that
	\begin{align}\label{prop:fem:error_pres.13}
		\begin{aligned}
			\|\eta_h-q\|_{s',\Omega}&\leq  \|\Pi_h^Y(q-\langle q\rangle_\Omega)\|_{s',\Omega}+\|\langle q- \Pi_h^Yq\rangle_\Omega\|_{s',\Omega}
			\\&\leq c\,  \|q-\langle q\rangle_\Omega\|_{s',\Omega}+c\,\| q- \Pi_h^Yq\|_{s',\Omega}
			\\&\leq c\,  h\,\|\nabla q\|_{s'}\,.
		\end{aligned}
	\end{align}
	Eventually, using \eqref{prop:fem:error_pres.12} and \eqref{prop:fem:error_pres.13} in \eqref{prop:fem:error_pres.1}, we conclude that the claimed a priori error estimate for the pressure. 
\end{proof}

\begin{remark}
	The term $I_h^{33}$ is the precise reason why we needed to formulate the a~priori error estimate~\eqref{eq:fem:error_pres} in terms of the $s'$-norm and not in the stronger $p'$-norm.
\end{remark}

As an immediate consequence of Theorem \ref{prop:fem:error_pres} and Corollary \ref{cor:fem:error_vel}, in particular, observing that $\min\{1,2/p'\}\min\{1,p'/2\}=\min\{2/p',p'/2\}$, we obtain the following a~priori error estimates for the pressure with an explicit error decay rate.\enlargethispage{4mm}

\begin{cor}\label{cor:pressure_rates}
	Let the assumptions of Theorem~\ref{prop:fem:error} be satisfied. Then,~it~holds that
	\begin{align*}
		\|q_h - q\|_{s',\Omega}
		\leq c\, h^{\min \{ 2/p', p'/2 \}}\,,
	\end{align*}
	where $c > 0$ is a constant depending only on the characteristics of $\vec{S}$, $\delta^{-1}$, $\omega_0$,~$m$,~$k$, $\|\vec{v}\|_{\infty,\Omega}$, $\|\D{v}\|_{p,\Omega}$, $c_0$, and $\Omega$.
	If $p > 2$ and, in addition, $\vec{f} \in L^2(\Omega)$, then it holds that 
	\begin{align*}
		\|q_h - q\|_{s',\Omega}
		\leq c\, h\,,
	\end{align*}
	where $c > 0$ is a constant depending only on the characteristics of $\vec{S}$, $\delta^{-1}$, $\omega_0$,~$m$,~$k$, $\|\vec{v}\|_{\infty,\Omega}$, $\|\D{v}\|_{p,\Omega}$, $c_0$, and $\Omega$.
\end{cor}

The resulting a priori error estimate in Theorem~\ref{prop:fem:error_pres} is sub-linear if $p<2$ due to the step when switching from the modular to the norm (cf.~\eqref{prop:fem:error_pres.4}). We can circumvent this by computing the error in a norm with basis $\ell' \coloneqq  \min \{ 2,s' \} \leq 2$,~i.e.,~$\ell \coloneqq \max\{2,s\}$.
\begin{cor}\label{cor:alt_pressure}
	Let the assumptions of Theorem~\ref{prop:fem:error_pres} be satisfied and $p < 2$. Then, it holds that
	\begin{align*}
		\smash{\|q_h - q\|_{\ell',\Omega}
			\leq c \,h\,(\|\nabla \vec{F}(\D{v})\|_{2,\Omega}+\|\nabla q\|_{\ell',\Omega}) + c\,\|\vec{F}(\D{v}_h) - \vec{F}(\D{v})\|_{2,\Omega}\,,}
	\end{align*}
	where the constant $c > 0$ depends only on the characteristics of $\vec{S}$, $\delta^{-1}$, $\omega_0$,~$m$,~$k$,~and~$\Omega$.
\end{cor}

\begin{proof}
	Resorting to Hölder's inequality together with $\ell' \leq s'\leq p'$, i.e., $\ell\geq s\geq p$, the proof of Theorem~\ref{prop:fem:error_pres} can be followed step-by-step --except for the estimation of the term $I_h^1$--  with $s$ replaced by $\ell$ to obtain
	\begin{align}\label{cor:alt_pressure.1}
		\begin{aligned}
			\|q_h - q\|_{\ell',\Omega}
			&\leq c\, \sup_{\vec{z}_h \in V_h\colon \|\vec{z}_h\|_{1,\ell} \leq 1} {(\vec{S}(\D{v}_h) - \vec{S}(\D{v}), \D{z}_h)_\Omega} \\&\quad+ c \,h\,(\|\nabla \vec{F}(\D{v})\|_{2,\Omega}+\|\nabla q\|_{\ell',\Omega}) + c\,\|\vec{F}(\D{v}_h) - \vec{F}(\D{v})\|_{2,\Omega}\,.
		\end{aligned}
	\end{align}
	We note that, due to $p<2$  and $\delta>0$, for every $t\ge 0$ and a.e.\ $x\in \Omega$, it holds that 
	\begin{align}
		\label{eq:elementary}
	(\phi_{\vert \D{v}(x)\vert })\d (t) \sim ((\delta+\vert \D{v}(x)\vert)^{p-1}+t)^{p'-2}t^2\ge \delta^{2-p}t^2\,.
	\end{align}
	As a result, using Hölder's inequality together with $\ell' \leq 2$, \eqref{eq:elementary}, and \eqref{eq:growth_S}, we~arrive~at
	\begin{align}\label{cor:alt_pressure.2}
		\begin{aligned}
		\|\vec{S}(\D{v}_h) - \vec{S}(\D{v})\|_{\ell',\Omega}^2
	  &\leq c\, \|\vec{S}(\D{v}_h) - \vec{S}(\D{v})\|_{2,\Omega}^2
	\\&	\leq c\, \rho_{(\phi_{\abs{\D{v}}})\d,\Omega} (\vec{S}(\D{v}_h) - \vec{S}(\D{v}))
	\\&	\leq c\,\|\vec{F}(\D{v}_h) - \vec{F}(\D{v})\|_{2,\Omega}^2 \,.
\end{aligned}
	\end{align}
	Eventually, using Hölder's inequality together with \eqref{cor:alt_pressure.2} in \eqref{cor:alt_pressure.1}, we conclude that
	the claimed a priori error estimate for the pressure.
\end{proof}

Aided by Corollary \ref{cor:fem:error_vel},  using that for $p<2$ and $\delta>0$, due to \eqref{eq:elementary},~it~holds~that $\| \nabla q\|_{\ell',\Omega}^2\leq c\,\| \nabla q\|_{2,\Omega}^2\leq c\,\rho_{(\varphi_{\vert \D{v}\vert })^*,\Omega}(\nabla q)$,
from Corollary \ref{cor:alt_pressure}, we~obtain

\begin{cor}\label{cor:alt_pressure2}
	Let the assumptions of Theorem~\ref{prop:fem:error_pres} be satisfied and $p < 2$. Then, it holds that
	\begin{align*}
		\smash{\|q_h - q\|_{\ell',\Omega}
			\leq c \,h\,(\|\nabla \vec{F}(\D{v})\|_{2,\Omega}+\rho_{(\varphi_{\vert \D{v}\vert })^*,\Omega}(\nabla q)^{1/2})\,,}
	\end{align*}
	where $c > 0$ is a constant depending only on the characteristics of $\vec{S}$, $\delta^{-1}$, $\omega_0$,~$m$,~$k$, $\|\vec{v}\|_{\infty,\Omega}$, $\|\D{v}\|_{p,\Omega}$, $c_0$, and $\Omega$.\enlargethispage{20mm}
\end{cor}


\begin{remark}
	All the results in this section can easily be transferred to the corresponding generalized Stokes system. Then, due to the absence of the convective~term, we obtain slightly stronger results: we may consider  $p \in (1, \infty)$ and the smallness assumption~\eqref{eq:fem:error_smallness} is not necessary. Furthermore, we may resort to $s \hspace*{-0.1em}\coloneqq \hspace*{-0.1em} p$~and~${\ell\hspace*{-0.1em} \coloneqq\hspace*{-0.1em} 2}$~if~$p\hspace*{-0.1em} <\hspace*{-0.1em} 2$.\vspace*{-2.5mm}
\end{remark}

\begin{figure}[H] \label{fig:fem:pressure_rates}
\includegraphics[width=13cm]{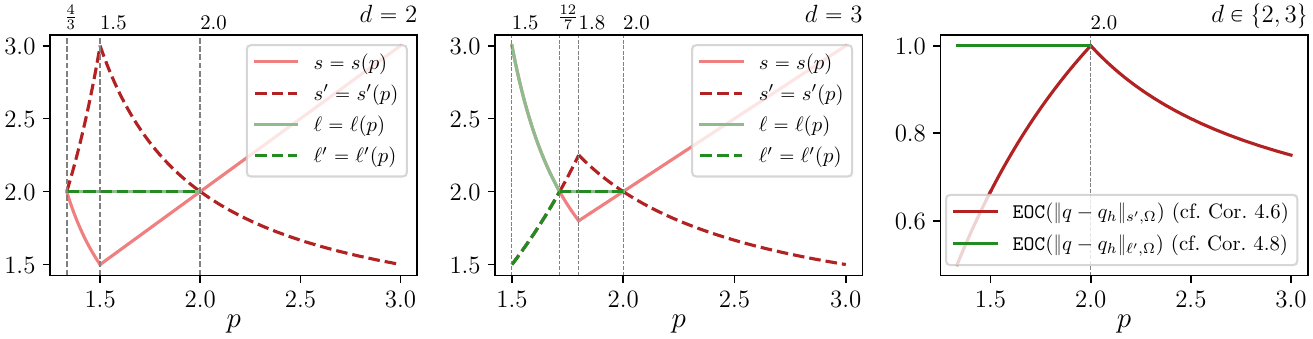}\vspace*{-2.5mm}
	\caption{LEFT: Plots of $s,s',\ell,\ell'\in (1,\infty)$ as  functions of $p\in (\frac{4}{3}(=\frac{2d}{d+1}),3)$ for $d=2$; MIDDLE: Plots of  $s,s',\ell,\ell'\in (1,\infty)$ as functions of $p\in (\frac{9}{4}(=\frac{2d}{d+1}),3)$~for~$d=3$;
		RIGHT: Plots of $\texttt{EOC}(\|q_h - q\|_{s',\Omega})$ (cf. Corollary \ref{cor:pressure_rates} and \eqref{eoc}) and $\texttt{EOC}(\|q_h - q\|_{\ell',\Omega})$ (cf. Corollary \ref{cor:alt_pressure2} and \eqref{eoc}) as functions of  $p\in (\frac{4}{3},3)$ for $d\in \{2,3\}$.}
\end{figure}
	
	\newpage
\section{Numerical experiments} \label{sec:fem:num_experiments}

In this section, we review the theoretical findings~of Section \ref{sec:error} via numerical experiments.

\subsection{Implementation details}

All experiments were conducted deploying the finite element software package \texttt{FEniCS} (version 2019.1.0), cf.\ \cite{LW10}. 
In~the~numerical~experiments, 
we deploy the MINI element (cf.\ Remark \ref{FEM.V}(i)) and the Taylor--Hood element (cf.\ Remark \ref{FEM.V}(ii)), each in the two-dimensional case, i.e., $d=2$:\enlargethispage{3mm}

\textit{$\bullet$ Fortin interpolation operator for the MINI element.} Appealing to \cite[Sec. VI.4]{BBDR12_fem} or  \cite[Appx. A.1]{BBDR12_fem}, a Fortin interpolation operator $\Pi_h^X\colon X\to X_h$ for the MINI element, for every $\vec{z}\in X$, is defined by 
\begin{align*}
	\Pi_h^X\vec{z}\coloneqq 	\Pi_h^{\textit{sz},1}\vec{z}-\Pi_h^{\textit{corr}}(\vec{z}-\Pi_h^{\textit{sz},1}\vec{z})\quad\text{ in }X_h\,,
\end{align*}
where $	\Pi_h^{\textit{sz},1}\colon \hspace*{-0.1em}X\hspace*{-0.1em}\to\hspace*{-0.1em} \mathbb{P}^1(\mathcal{T}_h)\cap X$ is the \textit{first-order Scott--Zhang quasi-interpolation~\mbox{operator}} (cf.\ \cite{zhang-scott}) and the local correction mapping $	\Pi_h^{\textit{corr}}\colon X\to X_h$ enforcing the preservation of the discrete divergence in the $Y_h^*$-sense (cf.\ Assumption \ref{assum:fem:projection_x}(iii)), for every $\vec{z}\in X$, is defined by 
\begin{align*}
	\Pi_h^{\textit{corr}}\vec{z}\coloneqq  \sum_{K\in \mathcal{T}_h}{\smash{\frac{ \langle \vec{z}-\Pi_h^X\vec{z}\rangle_K}{\langle b_K\rangle_K}}b_K}\quad\text{ in }X_h\,,
\end{align*}
where $b_K\in \mathbb{P}^3(K)\cap W^{1,1}_0(K)\subseteq X_h$ for all $K\in \mathcal{T}_h$ denotes an element~bubble~function. Since for every $\vec{z}\in X$, by construction, it holds that $\Pi_h^{\textit{corr}}\vec{z}=\vec{0}$ a.e.\ on $\partial\Omega$ and, thus, $\Pi_h^X\vec{z}=	\Pi_h^{\textit{sz},1}\vec{z}$ a.e.\ on $\partial\Omega$, 
it is not necessary to implement the Fortin interpolation operator $\Pi_h^X\colon X\to X_h$, but only the first-order Scott--Zhang quasi-interpolation~operator $	\Pi_h^{\textit{sz},1}\colon X\to \mathbb{P}^1(\mathcal{T}_h)\cap X$, which is less computationally costly.

\textit{$\bullet$ Fortin interpolation operator for the Taylor--Hood element.} Appealing to \cite{DST22}, 
a Fortin interpolation operator $\Pi_h^X\hspace*{-0.15em}\colon\hspace*{-0.15em} X\hspace*{-0.2em}\to\hspace*{-0.2em} X_h$ for the Taylor--Hood element,~for~\mbox{every}~${\vec{z}\hspace*{-0.15em}\in \hspace*{-0.15em} X}$, is defined by 
\begin{align*}
	\Pi_h^X\vec{z}\coloneqq 	\Pi_h^{\textit{sz},2}\vec{z}-\Pi_h^{\textit{corr}}(\vec{z}-\Pi_h^{\textit{sz},2}\vec{z})\quad X_h\,,
\end{align*}
where $	\Pi_h^{\textit{sz},2}\colon \hspace*{-0.1em}X\hspace*{-0.1em}\to\hspace*{-0.1em} \mathbb{P}^1(\mathcal{T}_h)\cap X$ is the \textit{second-order Scott--Zhang quasi-interpolation~opera-tor} (cf.\ \cite{zhang-scott}) and the local correction mapping  $	\Pi_h^{\textit{corr}}\colon \hspace*{-0.1em}X\hspace*{-0.1em}\to\hspace*{-0.1em} X_h$ enforcing~the~\mbox{preservation} of the discrete divergence in the $Y_h^*$-sense (cf.\ Assumption \ref{assum:fem:projection_x}(iii)),~for~every~$\vec{z}\in X$, is defined by 
\begin{align*}
	\Pi_h^{\textit{corr}}\vec{z}\coloneqq  \sum_{F\in \Gamma_h\,;\;
		\nu_1,\nu_2\in \mathcal{N}_h\,\colon F=\textup{conv}\,\{\nu_1,\nu_2\}}{((\div(\varphi^1_{\nu_1}\vec{z}),\varphi^1_{\nu_2})_\Omega -(\div(\varphi^1_{\nu_2}\vec{z}),\varphi^1_{\nu_1})_\Omega)\,\boldsymbol{\psi}_F}\,,
\end{align*}
where $\Gamma_h$ is the set of facets of $\mathcal{T}_h$, $\mathcal{N}_h$ the set of vertices of $\mathcal{T}_h$, $(\varphi_\nu^1)_{\nu\in \mathcal{N}_h}\subseteq \mathbb{P}^1(\mathcal{T}_h)\cap X$  nodal \hspace*{-0.1mm}basis \hspace*{-0.1mm}functions, \hspace*{-0.1mm}and \hspace*{-0.1mm}$(\boldsymbol{\psi}_F)_{F\in \Gamma_h}\hspace*{-0.2em}\subseteq\hspace*{-0.2em} X_h$ \hspace*{-0.1mm}modified \hspace*{-0.1mm}tangential~\hspace*{-0.1mm}\mbox{bubble}~\hspace*{-0.1mm}\mbox{functions}~\hspace*{-0.1mm}(cf.~\hspace*{-0.1mm}\cite{DST22}). Since, again, for every $\vec{z}\in X$, by construction, it holds that $\Pi_h^{\textit{corr}}\vec{z}=\vec{0}$ a.e.\ on $\partial\Omega$ and, thus, $\Pi_h^X\vec{z}=	\Pi_h^{\textit{sz},1}\vec{z}$ a.e.\ on $\partial\Omega$,
it is not necessary to implement the Fortin interpolation operator $\Pi_h^X\colon X\to X_h$, but only the second-order Scott--Zhang quasi-interpolation operator $	\Pi_h^{\textit{sz},2}\colon X\to \mathbb{P}^1(\mathcal{T}_h)\cap X$.

We approximate the discrete solution $(\vec{v}_h,q_h)^{\top}\in X_h\times Q_h$ of the non-linear saddle point problem (i.e., Problem (\hyperlink{Ph}{P$_h$})) using the Newton solver from \mbox{\texttt{PETSc}}~(version~3.17.3), cf.~\cite{LW10}, with absolute tolerance~of $\tau_{abs}= 1\textrm{e}{-}8$ and relative tolerance~of~${\tau_{rel}=1\textrm{e}{-}10}$. The linear system emerging in each Newton iteration is solved using a sparse direct solver from \texttt{MUMPS} (version~5.5.0),~cf.~\cite{mumps}.  In the implementation, the uniqueness of the pressure is enforced via adding a zero mean condition.

\subsection{Experimental setup}

We employ Problem (\hyperlink{Qh}{Q$_h$}) (or~equivalently~\mbox{Problem} (\hyperlink{Ph}{P$_h$}))  to approximate the system~\eqref{eq:fem:main_problem1}--\eqref{eq:fem:main_problem3} with  $\vec{S}\colon  \mathbb{R}^{2\times 2}\to\mathbb{R}^{2\times 2}_{\mathrm{sym}}$, where ${\Omega=(0,1)^2}$, for  every $\vec{A}\in\mathbb{R}^{2\times 2}$~defined~by
\begin{align*}
	\vec{S}(\vec{A}) \coloneqq \nu\,(\delta+\vert \vec{A}^{\textup{sym}}\vert)^{p-2}\vec{A}^{\textup{sym}}\,,
\end{align*}  
where  $\delta\coloneqq1\textrm{e}{-}5$, $ p\in (4/3, 3.0)$, $\nu \coloneqq 0.1$ if $p\ge 2$, and $\nu\coloneqq  100$ if $p< 2$.

As manufactured solutions serve the vector field $\vec{v}\in X$ and the function $q \in Q^s$, 
defined by\enlargethispage{7mm}
\begin{align}
	\vec{v}\coloneqq\tfrac{1}{10}\vert \cdot\vert^\beta  \textrm{id}_{\mathbb{R}^2}\,, \qquad q\vcentcolon = \vert \cdot\vert^{\gamma}-\langle\,\vert \cdot\vert^{\gamma}\,\rangle_\Omega\,,
\end{align}
i.e., we choose the right-hand side $\vec{f}\in L^{p'}(\Omega)$, the divergence  $g_1\in L^s(\Omega)$ and boundary data $\vec{g}_2\in W^{\smash{1,1-\frac{1}{s}}}(\partial\Omega)$ accordingly.

Concerning the regularity of the velocity vector field, we choose
$\beta =  0.01$, 
which just  yields
that $\vec{F}(\vec{D}\vec{v})\in W^{1,2}(\Omega)$. 
Concerning the regularity of the pressure,~we~consider the two different cases covered by Corollary~\ref{cor:pressure_rates}: 

\begin{itemize}[leftmargin=!,labelwidth=\widthof{(\hypertarget{case_2}{Case 2})}]
	\item[\textit{(\hypertarget{case_1}{Case 1})}] We choose $\gamma= 1-\smash{\frac{2}{p'}}+0.01$, 
	which just yields that $q \in
	W^{1,p'}(\Omega)\cap L^{p'}_0(\Omega)$. 
	\item[\textit{(\hypertarget{case_2}{Case 2})}] We choose $\gamma= \beta \frac{p-2}{2}+0.01$,  
	which just yields that $(\delta+\abs{\vec{D}\vec{v}})^{\frac{2-p}{2}}\vert \nabla  q\vert  \in
	L^2(\Omega)$. 
\end{itemize}

We construct an initial triangulation $\mathcal
T_{h_0}$, where $h_0=1$, by subdividing the domain $\Omega=(0,1)^2$ along its diagonals into four triangles  with different orientations.  Then, finer triangulations~$\mathcal T_{h_i}$, $0=1,\dots,7$, where $h_{i+1}=\frac{h_i}{2}$ for all $i=0,\dots,11$, are 
obtained by
regular subdivision of the previous grid: each \mbox{triangle} is subdivided
into four equal triangles by connecting the midpoints of the edges, i.e., applying the red-refinement rule (cf.\ \cite[Def.~4.8(i)]{Ba16}). 

As  \hspace*{-0.1mm}estimation  \hspace*{-0.1mm}of  \hspace*{-0.1mm}the  \hspace*{-0.1mm}convergence  \hspace*{-0.1mm}rates,   \hspace*{-0.1mm}the  \hspace*{-0.1mm}experimental  \hspace*{-0.1mm}order  \hspace*{-0.1mm}of  \hspace*{-0.1mm}convergence~(EOC)
\begin{align}
	\texttt{EOC}_i(e_i)\coloneqq\frac{\log(e_i/e_{i-1})}{\log(h_i/h_{i-1})}\,, \quad i=1,\dots,7\,,\label{eoc}
\end{align}
where for every $i= 1,\dots,7$, we denote by $e_i$ a general error quantity.

\subsection{Quasi-optimality of  the error decay rates~in~the~Corollaries~\ref{cor:fem:error_vel},~\ref{cor:pressure_rates}}

For $p \in  \{4/3, 1.5, 1.75, 2.0, 2.25, 2.5,2.75,3.0\}$, the Mini and the Taylor--Hood element, in Case~\hyperlink{case_1}{1} and Case~\hyperlink{case_2}{2},  and a series of triangulations $\mathcal{T}_{h_i}$,~$i = 0, \dots, 7$, obtained~by~red-refinement as described above, we compute the corresponding discrete solutions $(\vec{v}_{h_i},q_{h_i})^\top\in X_{h_i}\times Q_{h_i}$, $i=0,\dots,7$, of Problem  (\hyperlink{Qh}{Q$_h$}), the error quantities 
\begin{align*}
	\left.\begin{aligned}
		e_{\vec{v},i}&\coloneqq\|\vec{F}(\vec{D}\vec{v}_{h_i})-\vec{F}(\vec{D}\vec{v})\|_{2,\Omega}\,,\\
		e_{q,i}^s&\coloneqq \|q_{h_i}-q\|_{s',\Omega}\,,
	\end{aligned}\quad\right\}\quad i=0,\dots,7\,,
\end{align*}
and 
the corresponding EOCs presented in Table \ref{tab:1}, Table \ref{tab:2}, Table \ref{tab:3}, and Table~\ref{tab:4}, respectively:\enlargethispage{5mm}

For \hspace*{-0.1mm}both 
\hspace*{-0.1mm}the \hspace*{-0.1mm}Mini  \hspace*{-0.1mm}and \hspace*{-0.1mm}the \hspace*{-0.1mm}Taylor--Hood \hspace*{-0.1mm}element~\hspace*{-0.1mm}as~\hspace*{-0.1mm}well~\hspace*{-0.1mm}as~\hspace*{-0.1mm}in~\hspace*{-0.1mm}Case~\hspace*{-0.1mm}\hyperlink{case_1}{1}~\hspace*{-0.1mm}and~\hspace*{-0.1mm}Case~\hspace*{-0.1mm}\hyperlink{case_2}{2}, for the velocity errors,  we report
the expected convergence rate of about $ \texttt{EOC}_i(e_{\vec{v},i}) \approx\min\{1,p'/2\}$, $i=1,\dots,7$, (in  Case \hyperlink{case_1}{1}) and $\texttt{EOC}_i(e_{\vec{v},i}) \approx 1$, $i=1,\dots,7$, (in  Case \hyperlink{case_2}{2}), while for the pressure errors, only in Case \hyperlink{case_1}{1} and if $p \in  \{1.5, 1.75, 2.0\}$, we observe 
the expected convergence rate of about $\texttt{EOC}_i(e_{q,i}^s) \approx p'/2$, $i=1,\dots,7$. In both the Mini  and the Taylor--Hood element as well as in Case \hyperlink{case_1}{1} and Case \hyperlink{case_2}{2}, for the pressure errors, in the case $p>2$, we report an increased 
convergence rate of about $\texttt{EOC}_i(e_{q,i}^s) \approx 1$, ${i=1,\dots,7}$, (in Case \hyperlink{case_1}{1}) and $\texttt{EOC}_i(e_{q,i}^s) \approx 2/p'$ , $i=1,\dots,7$, (in  Case \hyperlink{case_2}{2}).~Note~that~similar~results have  also been reported in \cite{BBDR12_fem,KR23_ldg3}. Putting everything~together,~in~any~case, 
we can confirm the quasi-optimality of the a priori error estimates for the velocity vector field derived in Corollary \ref{cor:fem:error_vel}, while only in the case
 $p\in [1.5,2]$, we can confirm the quasi-optimality of the a priori error estimates for the pressure derived in Corollary~\ref{cor:pressure_rates}.

\begin{table}[H]
	\setlength\tabcolsep{1.9pt}
	\centering
	\begin{tabular}{c |c|c|c|c|c|c|c|c|c|c|c|c|c|} \cmidrule(){2-13}
		
		& \multicolumn{8}{c||}{\cellcolor{lightgray}Case \hyperlink{case_1}{1}}   & \multicolumn{4}{c|}{\cellcolor{lightgray}Case \hyperlink{case_2}{2}}\\ 
		\hline 
		
		\multicolumn{1}{|c||}{\cellcolor{lightgray}\diagbox[height=1.1\line,width=0.11\dimexpr\linewidth]{\vspace{-0.6mm}$i$}{\\[-5mm] $p$}}
		& \cellcolor{lightgray}$4/3$ & \cellcolor{lightgray}1.5 & \cellcolor{lightgray}1.75  & \cellcolor{lightgray}2.0  &  \cellcolor{lightgray}2.25 &
		\cellcolor{lightgray}2.5  & \cellcolor{lightgray}2.75 & \multicolumn{1}{c||}{\cellcolor{lightgray}3.0} &  \multicolumn{1}{c|}{\cellcolor{lightgray}2.25}  & \cellcolor{lightgray}2.5  & \cellcolor{lightgray}2.75 & \cellcolor{lightgray}3.0  \\ \hline\hline
		\multicolumn{1}{|c||}{\cellcolor{lightgray}$1$}            & 0.994 & 0.993 & 0.991 & 0.863 & 0.767 & 0.695 & 0.641 & \multicolumn{1}{c||}{0.601} & \multicolumn{1}{c|}{0.853} & 0.845 & 0.834 & 0.817 \\ \hline
		\multicolumn{1}{|c||}{\cellcolor{lightgray}$2$}            & 0.926 & 0.927 & 0.928 & 0.975 & 0.876 & 0.804 & 0.757 & \multicolumn{1}{c||}{0.722} & \multicolumn{1}{c|}{0.977} & 0.976 & 0.972 & 0.966 \\ \hline
		\multicolumn{1}{|c||}{\cellcolor{lightgray}$3$}            & 0.934 & 0.935 & 0.935 & 0.996 & 0.894 & 0.823 & 0.778 & \multicolumn{1}{c||}{0.745} & \multicolumn{1}{c|}{1.000} & 1.001 & 1.001 & 0.999 \\ \hline
		\multicolumn{1}{|c||}{\cellcolor{lightgray}$4$}            & 0.940 & 0.940 & 0.940 & 1.001 & 0.898 & 0.827 & 0.783 & \multicolumn{1}{c||}{0.750} & \multicolumn{1}{c|}{1.005} & 1.007 & 1.007 & 1.007 \\ \hline
		\multicolumn{1}{|c||}{\cellcolor{lightgray}$5$}            & 0.945 & 0.945 & 0.945 & 1.002 & 0.899 & 0.829 & 0.785 & \multicolumn{1}{c||}{0.752} & \multicolumn{1}{c|}{1.007} & 1.008 & 1.009 & 1.009 \\ \hline
		\multicolumn{1}{|c||}{\cellcolor{lightgray}$6$}            & 0.949 & 0.949 & 0.949 & 1.002 & 0.899 & 0.831 & 0.787 & \multicolumn{1}{c||}{0.753} & \multicolumn{1}{c|}{1.007} & 1.009 & 1.009 & 1.010 \\ \hline
		\multicolumn{1}{|c||}{\cellcolor{lightgray}$7$}            & 0.952 & 0.953 & 0.953 & 1.002 & 0.900 & 0.832 & 0.788 & \multicolumn{1}{c||}{0.753} & \multicolumn{1}{c|}{1.007} & 1.009 & 1.010 & 1.010  \\ \hline\hline
		\multicolumn{1}{|c||}{\cellcolor{lightgray}\small theory}  & 1.000 & 1.000 & 1.000 & 1.000 & 0.900 & 0.833 & 0.786 & \multicolumn{1}{c||}{0.750} & \multicolumn{1}{c|}{1.000} & 1.000 & 1.000 & 1.000 \\ \hline
	\end{tabular}\vspace{-2mm}
	\caption{Experimental order of convergence (MINI): $\texttt{EOC}_i(e_{\vec{v},i})$,~${i=1,\dots,7}$.}\label{tab:1}
\end{table}\enlargethispage{10mm}\vspace{-7.5mm}

\begin{table}[H]
	\setlength\tabcolsep{1.9pt}
	\centering
	\begin{tabular}{c |c|c|c|c|c|c|c|c|c|c|c|c|c|} \cmidrule(){2-13}
		
		& \multicolumn{8}{c||}{\cellcolor{lightgray}Case \hyperlink{case_1}{1}}   & \multicolumn{4}{c|}{\cellcolor{lightgray}Case \hyperlink{case_2}{2}}\\ 
		\hline 
		
		\multicolumn{1}{|c||}{\cellcolor{lightgray}\diagbox[height=1.1\line,width=0.11\dimexpr\linewidth]{\vspace{-0.6mm}$i$}{\\[-5mm] $p$}}
		& \cellcolor{lightgray}$4/3$ & \cellcolor{lightgray}1.5 & \cellcolor{lightgray}1.75  & \cellcolor{lightgray}2.0  &  \cellcolor{lightgray}2.25 &
		\cellcolor{lightgray}2.5  & \cellcolor{lightgray}2.75 & \multicolumn{1}{c||}{\cellcolor{lightgray}3.0} &  \multicolumn{1}{c|}{\cellcolor{lightgray}2.25}  & \cellcolor{lightgray}2.5  & \cellcolor{lightgray}2.75 & \cellcolor{lightgray}3.0  \\ \hline\hline
		\multicolumn{1}{|c||}{\cellcolor{lightgray}$1$}            & 1.494 & 1.010 & 1.313 & 0.980 & 0.972 & 0.955 & 0.943 & \multicolumn{1}{c||}{0.934} & \multicolumn{1}{c|}{1.060} & 1.114 & 1.153 & 1.182 \\ \hline
		\multicolumn{1}{|c||}{\cellcolor{lightgray}$2$}            & 0.989 & 0.640 & 0.850 & 1.010 & 1.006 & 1.003 & 1.001 & \multicolumn{1}{c||}{0.999} & \multicolumn{1}{c|}{1.116} & 1.197 & 1.260 & 1.310 \\ \hline
		\multicolumn{1}{|c||}{\cellcolor{lightgray}$3$}            & 0.999 & 0.658 & 0.858 & 1.009 & 1.008 & 1.007 & 1.006 & \multicolumn{1}{c||}{1.006} & \multicolumn{1}{c|}{1.119} & 1.204 & 1.271 & 1.325 \\ \hline
		\multicolumn{1}{|c||}{\cellcolor{lightgray}$4$}            & 1.002 & 0.664 & 0.861 & 1.010 & 1.010 & 1.009 & 1.009 & \multicolumn{1}{c||}{1.009} & \multicolumn{1}{c|}{1.121} & 1.209 & 1.279 & 1.336 \\ \hline
		\multicolumn{1}{|c||}{\cellcolor{lightgray}$5$}            & 1.002 & 0.666 & 0.862 & 1.010 & 1.010 & 1.010 & 1.010 & \multicolumn{1}{c||}{1.010} & \multicolumn{1}{c|}{1.122} & 1.211 & 1.283 & 1.342 \\ \hline
		\multicolumn{1}{|c||}{\cellcolor{lightgray}$6$}            & 1.002 & 0.668 & 0.862 & 1.010 & 1.010 & 1.010 & 1.010 & \multicolumn{1}{c||}{1.011} & \multicolumn{1}{c|}{1.122} & 1.212 & 1.285 & 1.345 \\ \hline
		\multicolumn{1}{|c||}{\cellcolor{lightgray}$7$}            & 1.003 & 0.669 & 0.862 & 1.010 & 1.010 & 1.010 & 1.011 & \multicolumn{1}{c||}{1.011} & \multicolumn{1}{c|}{1.122} & 1.212 & 1.286 & 1.347 \\ \hline\hline
		\multicolumn{1}{|c||}{\cellcolor{lightgray}\small theory}  & 0.500 & 0.667 & 0.875 & 1.000 & 0.900 & 0.833 & 0.786 & \multicolumn{1}{c||}{0.750} & \multicolumn{1}{c|}{1.000} & 1.000 & 1.000 & 1.000 \\ \hline
	\end{tabular}\vspace{-2mm}
	\caption{Experimental order of convergence (MINI): $\texttt{EOC}_i(e_{q,i}^s)$,~${i=1,\dots,7}$.}\label{tab:2}
\end{table}\vspace{-7.5mm}

\begin{table}[H]
	\setlength\tabcolsep{1.9pt}
	\centering
	\begin{tabular}{c |c|c|c|c|c|c|c|c|c|c|c|c|c|} \cmidrule(){2-13}
		
		& \multicolumn{8}{c||}{\cellcolor{lightgray}Case \hyperlink{case_1}{1}}   & \multicolumn{4}{c|}{\cellcolor{lightgray}Case \hyperlink{case_2}{2}}\\ 
		\hline 
		
		\multicolumn{1}{|c||}{\cellcolor{lightgray}\diagbox[height=1.1\line,width=0.11\dimexpr\linewidth]{\vspace{-0.6mm}$i$}{\\[-5mm] $p$}}
		& \cellcolor{lightgray}$4/3$ & \cellcolor{lightgray}1.5 & \cellcolor{lightgray}1.75  & \cellcolor{lightgray}2.0  &  \cellcolor{lightgray}2.25 &
		\cellcolor{lightgray}2.5  & \cellcolor{lightgray}2.75 & \multicolumn{1}{c||}{\cellcolor{lightgray}3.0} &  \multicolumn{1}{c|}{\cellcolor{lightgray}2.25}  & \cellcolor{lightgray}2.5  & \cellcolor{lightgray}2.75 & \cellcolor{lightgray}3.0  \\ \hline\hline
		\multicolumn{1}{|c||}{\cellcolor{lightgray}$1$}            & 0.990 & 0.991 & 0.994 & 0.793 & 0.733 & 0.683 & 0.644 & \multicolumn{1}{c||}{0.614} & \multicolumn{1}{c|}{0.789} & 0.782 & 0.769 & 0.744 \\ \hline
		\multicolumn{1}{|c||}{\cellcolor{lightgray}$2$}            & 1.007 & 1.008 & 1.007 & 0.982 & 0.880 & 0.812 & 0.766 & \multicolumn{1}{c||}{0.730} & \multicolumn{1}{c|}{0.981} & 0.978 & 0.974 & 0.966 \\ \hline
		\multicolumn{1}{|c||}{\cellcolor{lightgray}$3$}            & 1.009 & 1.010 & 1.010 & 1.005 & 0.897 & 0.828 & 0.783 & \multicolumn{1}{c||}{0.748} & \multicolumn{1}{c|}{1.004} & 1.003 & 1.002 & 1.000 \\ \hline
		\multicolumn{1}{|c||}{\cellcolor{lightgray}$4$}            & 1.008 & 1.010 & 1.010 & 1.009 & 0.900 & 0.832 & 0.786 & \multicolumn{1}{c||}{0.752} & \multicolumn{1}{c|}{1.009} & 1.008 & 1.008 & 1.008 \\ \hline
		\multicolumn{1}{|c||}{\cellcolor{lightgray}$5$}            & 1.008 & 1.009 & 1.010 & 1.010 & 0.901 & 0.833 & 0.788 & \multicolumn{1}{c||}{0.753} & \multicolumn{1}{c|}{1.010} & 1.010 & 1.009 & 1.009 \\ \hline
		\multicolumn{1}{|c||}{\cellcolor{lightgray}$6$}            & 1.007 & 1.009 & 1.010 & 1.010 & 0.901 & 0.834 & 0.789 & \multicolumn{1}{c||}{0.754} & \multicolumn{1}{c|}{1.010} & 1.010 & 1.010 & 1.010 \\ \hline
		\multicolumn{1}{|c||}{\cellcolor{lightgray}$7$}            & 1.007 & 1.008 & 1.010 & 1.010 & 0.901 & 0.835 & 0.789 & \multicolumn{1}{c||}{0.755} & \multicolumn{1}{c|}{1.010} & 1.010 & 1.010 & 1.010  \\ \hline\hline
		\multicolumn{1}{|c||}{\cellcolor{lightgray}\small theory}  & 1.000 & 1.000 & 1.000 & 1.000 & 0.900 & 0.833 & 0.786 & \multicolumn{1}{c||}{0.750} & \multicolumn{1}{c|}{1.000} & 1.000 & 1.000 & 1.000 \\ \hline
	\end{tabular}\vspace{-2mm}
	\caption{Experimental order of convergence (Taylor--Hood): $\texttt{EOC}_i(e_{\vec{v},i})$,~${i=1,\dots,7}$.}\label{tab:3}
\end{table}\vspace{-7.5mm}

\begin{table}[H]
	\setlength\tabcolsep{1.9pt}
	\centering
	\begin{tabular}{c |c|c|c|c|c|c|c|c|c|c|c|c|c|} \cmidrule(){2-13}
		
		& \multicolumn{8}{c||}{\cellcolor{lightgray}Case \hyperlink{case_1}{1}}   & \multicolumn{4}{c|}{\cellcolor{lightgray}Case \hyperlink{case_2}{2}}\\ 
		\hline 
		
		\multicolumn{1}{|c||}{\cellcolor{lightgray}\diagbox[height=1.1\line,width=0.11\dimexpr\linewidth]{\vspace{-0.6mm}$i$}{\\[-5mm] $p$}}
		& \cellcolor{lightgray}$4/3$ & \cellcolor{lightgray}1.5 & \cellcolor{lightgray}1.75  & \cellcolor{lightgray}2.0  &  \cellcolor{lightgray}2.25 &
		\cellcolor{lightgray}2.5  & \cellcolor{lightgray}2.75 & \multicolumn{1}{c||}{\cellcolor{lightgray}3.0} &  \multicolumn{1}{c|}{\cellcolor{lightgray}2.25}  & \cellcolor{lightgray}2.5  & \cellcolor{lightgray}2.75 & \cellcolor{lightgray}3.0  \\ \hline\hline
		\multicolumn{1}{|c||}{\cellcolor{lightgray}$1$}            & 1.255 & 1.011 & 0.987 & 0.965 & 0.967 & 0.967 & 0.968 & \multicolumn{1}{c||}{0.970} & \multicolumn{1}{c|}{1.057} & 1.121 & 1.170 & 1.211 \\ \hline
		\multicolumn{1}{|c||}{\cellcolor{lightgray}$2$}            & 1.256 & 0.961 & 0.990 & 1.005 & 1.004 & 1.002 & 0.999 & \multicolumn{1}{c||}{0.997} & \multicolumn{1}{c|}{1.111} & 1.191 & 1.255 & 1.308 \\ \hline
		\multicolumn{1}{|c||}{\cellcolor{lightgray}$3$}            & 1.171 & 0.890 & 0.985 & 1.009 & 1.008 & 1.008 & 1.007 & \multicolumn{1}{c||}{1.006} & \multicolumn{1}{c|}{1.119} & 1.204 & 1.272 & 1.327 \\ \hline
		\multicolumn{1}{|c||}{\cellcolor{lightgray}$4$}            & 1.101 & 0.822 & 0.981 & 1.010 & 1.010 & 1.009 & 1.009 & \multicolumn{1}{c||}{1.009} & \multicolumn{1}{c|}{1.121} & 1.209 & 1.280 & 1.337 \\ \hline
		\multicolumn{1}{|c||}{\cellcolor{lightgray}$5$}            & 1.051 & 0.770 & 0.974 & 1.010 & 1.010 & 1.010 & 1.010 & \multicolumn{1}{c||}{1.010} & \multicolumn{1}{c|}{1.122} & 1.211 & 1.283 & 1.343 \\ \hline
		\multicolumn{1}{|c||}{\cellcolor{lightgray}$6$}            & 1.023 & 0.739 & 0.967 & 1.010 & 1.010 & 1.010 & 1.010 & \multicolumn{1}{c||}{1.010} & \multicolumn{1}{c|}{1.122} & 1.212 & 1.285 & 1.345 \\ \hline
		\multicolumn{1}{|c||}{\cellcolor{lightgray}$7$}            & 1.008 & 0.720 & 0.958 & 1.010 & 1.010 & 1.010 & 1.010 & \multicolumn{1}{c||}{1.010} & \multicolumn{1}{c|}{1.122} & 1.212 & 1.286 & 1.347 \\ \hline\hline
		\multicolumn{1}{|c||}{\cellcolor{lightgray}\small theory}  & 0.500 & 0.667 & 0.875 & 1.000 & 0.900 & 0.833 & 0.786 & \multicolumn{1}{c||}{0.750} & \multicolumn{1}{c|}{1.000} & 1.000 & 1.000 & 1.000 \\ \hline
	\end{tabular}\vspace{-2mm}
	\caption{Experimental order of convergence (Taylor--Hood): $\texttt{EOC}_i(e_{q,i}^s)$,~${i=1,\dots,7}$.}\label{tab:4}
\end{table}\vspace{-7.5mm}

\subsection{Quasi-optimality of  the error decay rate in~Corollary~\ref{cor:alt_pressure2}}

For~$p \in  \{4/3, 1.4, 1.45, 1.5, 1.75, 2.0\}$, the Mini and the Taylor--Hood element, in Case~\hyperlink{case_1}{1},  and a series of triangulations $\mathcal{T}_{h_i}$,~$i = 0, \dots, 7$, obtained~by~red-refinement as described above, we compute the corresponding discrete solutions $(\vec{v}_{h_i},q_{h_i})^\top\in X_{h_i}\times Q_{h_i}$, $i=0,\dots,7$, of Problem  (\hyperlink{Qh}{Q$_h$}), the error quantities\vspace*{-1mm}
\begin{align*}
	\left.\begin{aligned}
		e_{q,i}^{\ell}&\coloneqq \|q_{h_i}-q\|_{\ell',\Omega}\,,\\
		e_{q,i}^s&\coloneqq \|q_{h_i}-q\|_{s',\Omega}\,,\\
		e_{q,i}^p&\coloneqq \|q_{h_i}-q\|_{p',\Omega}\,,
	\end{aligned}\quad\right\}\quad i=0,\dots,7\,,
\end{align*}
and 
the corresponding EOCs presented in Table \ref{tab:5} and Table  \ref{tab:6}, respectively: \enlargethispage{15mm}

For both the Mini and the Taylor--Hood  element,  for the errors $e_{q,i}^{\ell}$, $i=0,\dots,7$,~we report
the expected convergence rate of about $ \texttt{EOC}_i(e_{q,i}^{\ell}) \approx 1$, $i=1,\dots,7$, while for the errors $e_{q,i}^s$, $i=0,\dots,7$, 
we report a  convergence rate~of~about~${\texttt{EOC}_i(e_{q,i}^s) \approx 2/s'}$, ${i=1,\dots,7}$ and for the errors $e_{q,i}^p$, $i\hspace*{-0.1em}=\hspace*{-0.1em}0,\dots,7$, we report a convergence rate of about 
$\texttt{EOC}_i(e_{q,i}^p)\hspace*{-0.1em} \approx \hspace*{-0.1em}2/p'$, $i\hspace*{-0.1em}=\hspace*{-0.1em}1,\dots,7$. The~increased~convergence~rates (compared~to~Corollary~\ref{cor:pressure_rates}) may be explained  be the fact that, for this example, we have that $q\in L^\infty(\Omega)$, so that from Corollary \ref{cor:alt_pressure2} together with local inverse estimates (cf. \cite[Lem. 12.1]{EG21}) and the $L^\infty$-stability of $\Pi_h^Y$ (cf.\ \cite[Lem. 5.2]{BBDR12_fem}) and a standard argument, we can deduce that
$\sup_{h\in (0,1]}{\|q_h\|_{\infty,\Omega}}\hspace*{-0.1em}<\hspace*{-0.1em}\infty$ and, thus, with real interpolation and,~again,~Corollary~\ref{cor:alt_pressure2}~that\vspace*{-1mm}
\begin{align*}
	\|q_h-q\|_{s',\Omega}\leq \|q_h-q\|_{2,\Omega}^{2/s'}\|q_h-q\|_{\infty,\Omega}^{1-2/s'}\leq c\,h^{2/s'}\,,\\
		\|q_h-q\|_{p',\Omega}\leq \|q_h-q\|_{2,\Omega}^{2/p'}\|q_h-q\|_{\infty,\Omega}^{1-2/p'}\leq c\,h^{2/p'}\,.
\end{align*}
Putting everything together, in any case, 
we can confirm the quasi-optimality of the a priori error estimates for the pressure  derived in Corollary \ref{cor:alt_pressure2}. 

\begin{table}[H]
	\setlength\tabcolsep{1.8pt}
	\centering
	\begin{tabular}{c |c|c|c|c|c|c|c|c|c|c|c|c|} \cmidrule(){2-13}
		
		& \multicolumn{6}{c||}{\cellcolor{lightgray}$e_{q,i}^{\ell}$\phantom{$X^{X^X}_{X_{X_X}}$}}   & \multicolumn{3}{c||}{\cellcolor{lightgray}$e_{q,i}^s$} & \multicolumn{3}{c|}{\cellcolor{lightgray}$e_{q,i}^p$}\\ 
		\hline 
		
		\multicolumn{1}{|c||}{\cellcolor{lightgray}\diagbox[height=1.1\line,width=0.11\dimexpr\linewidth]{\vspace{-0.6mm}$i$}{\\[-5mm] $p$}}
		& \cellcolor{lightgray}$4/3$  & \cellcolor{lightgray}1.40  & \cellcolor{lightgray}1.45  & \cellcolor{lightgray}1.5 & \cellcolor{lightgray}1.75 & \multicolumn{1}{c||}{\cellcolor{lightgray}2.0} &
		\cellcolor{lightgray}$4/3$ & \cellcolor{lightgray}1.40  & \multicolumn{1}{c||}{\cellcolor{lightgray}1.45}   &
		\cellcolor{lightgray}$4/3$ & \cellcolor{lightgray}1.40  &\cellcolor{lightgray}1.45      \\ \hline\hline
		\multicolumn{1}{|c||}{\cellcolor{lightgray}$1$}            & 1.494 & 1.497 & 1.499 & 1.502 & 1.513 & \multicolumn{1}{c||}{1.513} & \multicolumn{1}{c|}{1.494} & 1.293 & \multicolumn{1}{c||}{1.147} & \multicolumn{1}{c|}{0.756} & 0.861 & 0.937 \\ \hline
		\multicolumn{1}{|c||}{\cellcolor{lightgray}$2$}            & 0.989 & 0.991 & 0.992 & 0.993 & 1.002 & \multicolumn{1}{c||}{1.009} & \multicolumn{1}{c|}{0.989} & 0.839 & \multicolumn{1}{c||}{0.736} & \multicolumn{1}{c|}{0.466} & 0.540 & 0.591 \\ \hline
		\multicolumn{1}{|c||}{\cellcolor{lightgray}$3$}            & 0.999 & 1.000 & 1.001 & 1.001 & 1.006 & \multicolumn{1}{c||}{1.012} & \multicolumn{1}{c|}{0.999} & 0.852 & \multicolumn{1}{c||}{0.751} & \multicolumn{1}{c|}{0.488} & 0.560 & 0.611 \\ \hline
		\multicolumn{1}{|c||}{\cellcolor{lightgray}$4$}            & 1.002 & 1.002 & 1.002 & 1.003 & 1.006 & \multicolumn{1}{c||}{1.011} & \multicolumn{1}{c|}{1.002} & 0.856 & \multicolumn{1}{c||}{0.757} & \multicolumn{1}{c|}{0.496} & 0.567 & 0.617 \\ \hline
		\multicolumn{1}{|c||}{\cellcolor{lightgray}$5$}            & 1.002 & 1.002 & 1.003 & 1.003 & 1.006 & \multicolumn{1}{c||}{1.011} & \multicolumn{1}{c|}{1.002} & 0.858 & \multicolumn{1}{c||}{0.759} & \multicolumn{1}{c|}{0.499} & 0.570 & 0.620 \\ \hline
		\multicolumn{1}{|c||}{\cellcolor{lightgray}$6$}            & 1.002 & 1.003 & 1.003 & 1.003 & 1.006 & \multicolumn{1}{c||}{1.010} & \multicolumn{1}{c|}{1.002} & 0.859 & \multicolumn{1}{c||}{0.760} & \multicolumn{1}{c|}{0.500} & 0.572 & 0.621 \\ \hline
		\multicolumn{1}{|c||}{\cellcolor{lightgray}$7$}            & 1.003 & 1.003 & 1.003 & 1.004 & 1.006 & \multicolumn{1}{c||}{1.010} & \multicolumn{1}{c|}{1.003} & 0.860 & \multicolumn{1}{c||}{0.761} & \multicolumn{1}{c|}{0.501} & 0.573 & 0.622 \\ \hline
		\multicolumn{1}{|c||}{\cellcolor{lightgray}\small theory}  & 1.000 & 1.000 & 1.000 & 1.000 & 1.000 & \multicolumn{1}{c||}{1.000} & \multicolumn{1}{c|}{0.500} & 0.571 & \multicolumn{1}{c||}{0.621} & \multicolumn{1}{c|}{ --- } &  ---  &  ---  \\ \hline
		\end{tabular}\vspace{-2mm}
	\caption{Experimental order of convergence (MINI): $\texttt{EOC}_i(e_{q,i}^{\textup{a}})$,~${i\hspace*{-0.1em}=\hspace*{-0.1em}1,\dots,7}$,~${\textup{a}\hspace*{-0.1em}\in\hspace*{-0.1em} \{\ell,s,p\}}$.}\label{tab:5}
\end{table}\vspace{-7.5mm}

\begin{table}[H]
	\setlength\tabcolsep{1.8pt}
	\centering
	\begin{tabular}{c |c|c|c|c|c|c|c|c|c|c|c|c|} \cmidrule(){2-13}
		
		& \multicolumn{6}{c||}{\cellcolor{lightgray}$e_{q,i}^{\ell}$\phantom{$X^{X^X}_{X_{X_X}}$}}   & \multicolumn{3}{c||}{\cellcolor{lightgray}$e_{q,i}^s$} & \multicolumn{3}{c|}{\cellcolor{lightgray}$e_{q,i}^p$}\\ 
		\hline 
		
		\multicolumn{1}{|c||}{\cellcolor{lightgray}\diagbox[height=1.1\line,width=0.11\dimexpr\linewidth]{\vspace{-0.6mm}$i$}{\\[-5mm] $p$}}
		& \cellcolor{lightgray}$4/3$  & \cellcolor{lightgray}1.40  & \cellcolor{lightgray}1.45  & \cellcolor{lightgray}1.5 & \cellcolor{lightgray}1.75 & \multicolumn{1}{c||}{\cellcolor{lightgray}2.0} &
		\cellcolor{lightgray}$4/3$ & \cellcolor{lightgray}1.40  & \multicolumn{1}{c||}{\cellcolor{lightgray}1.45}   &
		\cellcolor{lightgray}$4/3$ & \cellcolor{lightgray}1.40  &\cellcolor{lightgray}1.45      \\ \hline\hline
		\multicolumn{1}{|c||}{\cellcolor{lightgray}$1$}            & 1.255 & 1.241 & 1.226 & 1.207 & 1.096 & \multicolumn{1}{c||}{1.100} & \multicolumn{1}{c|}{1.255} & 1.184 & \multicolumn{1}{c||}{1.108} & \multicolumn{1}{c|}{1.000} & 1.023 & 1.019 \\ \hline
		\multicolumn{1}{|c||}{\cellcolor{lightgray}$2$}            & 1.256 & 1.262 & 1.256 & 1.243 & 1.121 & \multicolumn{1}{c||}{0.962} & \multicolumn{1}{c|}{1.256} & 1.128 & \multicolumn{1}{c||}{1.040} & \multicolumn{1}{c|}{0.686} & 0.849 & 0.923 \\ \hline
		\multicolumn{1}{|c||}{\cellcolor{lightgray}$3$}            & 1.171 & 1.199 & 1.210 & 1.210 & 1.118 & \multicolumn{1}{c||}{0.988} & \multicolumn{1}{c|}{1.171} & 1.039 & \multicolumn{1}{c||}{0.957} & \multicolumn{1}{c|}{0.594} & 0.718 & 0.816 \\ \hline
		\multicolumn{1}{|c||}{\cellcolor{lightgray}$4$}            & 1.101 & 1.138 & 1.160 & 1.173 & 1.116 & \multicolumn{1}{c||}{1.001} & \multicolumn{1}{c|}{1.101} & 0.972 & \multicolumn{1}{c||}{0.889} & \multicolumn{1}{c|}{0.561} & 0.662 & 0.741 \\ \hline
		\multicolumn{1}{|c||}{\cellcolor{lightgray}$5$}            & 1.051 & 1.084 & 1.110 & 1.131 & 1.111 & \multicolumn{1}{c||}{1.007} & \multicolumn{1}{c|}{1.051} & 0.927 & \multicolumn{1}{c||}{0.844} & \multicolumn{1}{c|}{0.529} & 0.633 & 0.700 \\ \hline
		\multicolumn{1}{|c||}{\cellcolor{lightgray}$6$}            & 1.023 & 1.046 & 1.069 & 1.092 & 1.105 & \multicolumn{1}{c||}{1.009} & \multicolumn{1}{c|}{1.023} & 0.894 & \multicolumn{1}{c||}{0.816} & \multicolumn{1}{c|}{0.505} & 0.607 & 0.677 \\ \hline
		\multicolumn{1}{|c||}{\cellcolor{lightgray}$7$}            & 1.008 & 1.023 & 1.040 & 1.060 & 1.097 & \multicolumn{1}{c||}{1.010} & \multicolumn{1}{c|}{1.008} & 0.874 & \multicolumn{1}{c||}{0.792} & \multicolumn{1}{c|}{0.492} & 0.586 & 0.656 \\ \hline
		\multicolumn{1}{|c||}{\cellcolor{lightgray}\small theory}  & 1.000 & 1.000 & 1.000 & 1.000 & 1.000 & \multicolumn{1}{c||}{1.000} & \multicolumn{1}{c|}{0.500} & 0.571 & \multicolumn{1}{c||}{0.621} & \multicolumn{1}{c|}{ --- } &  ---  &  ---  \\ \hline
		\end{tabular}\vspace{-2mm}
	\caption{Experimental order of convergence (Taylor--Hood): $\texttt{EOC}_i(e_{q,i}^{\textup{a}})$,~${i\hspace*{-0.1em}=\hspace*{-0.1em}1,\dots,7}$, ${\textup{a}\hspace*{-0.1em}\in\hspace*{-0.1em} \{\ell,s,p\}}$.}\label{tab:6}
\end{table}\vspace{-7.5mm} 
	
	\section{Conclusion}
	For the steady, generalized Navier-Stokes system with inhomogeneous Dirichlet boundary data and an inhomogeneous divergence constraint \eqref{eq:fem:main_problem1}--\eqref{eq:fem:main_problem3}, we proposed a FE discretization. We established the well-posedness, stability and (weak) convergence to a solution of the continuous problem under conditions that are similar to the existence theory for the continuous problem with inhomogeneous~boundary conditions.

	Then, we derived a priori error estimates for this discretization, for the velocity vector field and the kinematic pressure. The findings match the state of the art for the generalized Stokes system (cf.\ \cite{BBDR12_fem}) and for an LDG discretization (cf.\ \cite{KR23_ldg2,KR23_ldg3}). A new approach was followed via considering the $L^{\smash{\ell'}}(\Omega)$-norm, where $\ell\coloneqq \max \{2, s \}$,~if~$p<2$. It allows for a linear error decay rate in contrast to previous results in the $L^{s'}(\Omega)$-norm.
	
	Numerical experiments with critical regularity have been conducted as benchmark tests complementing the theoretical findings in Section~\ref{sec:error}. These experiments confirm the quasi-optimality of the obtained a priori error estimates for the velocity vector field. The same holds for the a priori error estimates  for the pressure in the $L^{p'}(\Omega)$-norm if $p \in [{\tfrac{3d}{d+2}},2]$.
	If $p<2$, the linear error decay rates for the pressure error in the weaker $L^{\smash{\ell'}}(\Omega)$-norm seem to be quasi-optimal at least if $\ell = 2$, this is if $p \geq \smash{\tfrac{4d}{d+4}}$.~Unfortunately, the case $p \in (\smash{\tfrac{2d}{d+1}}, \smash{\tfrac{4d}{d+4}})$ could not be investigated in our setup since this interval is empty if $d=2$. Conducting respective studies for $d=3$ will be part of future research. 
	Nevertheless, the results hint that the $L^{\smash{\ell'}}(\Omega)$-norm is a suitable error measure for the pressure if $p < 2$.
	The sub-optimality of the results for the pressure if $p>2$ is similar to comparable studies for the generalized Stokes~system~\cite{BBDR12_fem}~and~an~LDG~discretization~\cite{KR23_ldg3}.
	
	\appendix
	\section{Discrete Lipschitz truncation}
	
	\hspace*{-0.1em}For the convenience~of~the reader, we recall a result on the discrete Lipschitz truncation and its divergence-correc\-ted version which is used in the convergence proof. The statements are due to \cite{DKS13a}; see also \cite[Lem. 2.29, 2.30]{Tscherpel_phd}. Due to the approximability property~\cite[Thm. 4.6]{DR07_interpolation} and density of smooth functions,  our assumptions on the projection operators~are~sufficient.
	\begin{theorem}[Discrete Lipschitz truncation] \label{thm:fem:dlt}
		Let Assumptions~\ref{assum:fem:geometry}, \ref{assum:fem:projection_x}, \ref{assum:fem:projection_y} and \ref{assum:fem:localbasis} be satisfied, let $p \in (1, \infty)$, and let $n \in \N$ indicate a sequence of triangulations with mesh width $h_n \to 0$. Assume that the sequence $\vec{v}^n \in V_{h_n}$ converges weakly to zero in $W_0^{1,p}(\Omega)$ as $n \to \infty$.
		Then, for every $n, j \in \N$, there exist\vspace*{-0.25mm}
		\begin{itemize}
			\item real numbers $\lambda_{n,j} \in [ 2^{2^j}, 2^{2^{j+1}-1} ]$;
			\item sets $B_{n,j} = \operatorname{int} ( \{K \in \mathcal{T}_{h_n,j}\} ) \subseteq \Omega$, where $\mathcal{T}_{h_n,j} \subseteq \mathcal{T}_{h_n}$ is a subset~of~elements;
			\item truncated functions $\vec{v}^{n,j}\in W_0^{1,\infty}(\Omega) \cap V_{h_n}$;
		\end{itemize}
		with the following properties:
		\begin{itemize}
			\item[(A.1)] \hypertarget{eq:fem:lteq}{} $\vec{v}^{n,j}= \vec{v}^n$ on $ \Omega \setminus B_{n,j}$ for all $n,j\in \mathbb{N}$;
			\item[(A.2)] \hypertarget{eq:fem:ltb}{} $\|\lambda_{n,j} \chi_{B_{n,j}}\|_p \leq c\, 2^{-j/p} $ for all $n,j\in \mathbb{N}$;
			\item[(A.3)] \hypertarget{eq:fem:ltw1inf}{} $	\|\nabla \vec{v}^{n,j}\|_{\infty} \leq c \,\lambda_{n,j}$ for all $n,j\in \mathbb{N}$;
			\item[(A.4)] $\vec{v}^{n,j}\to \vec{0} $ in $L^{\infty}(\Omega)$ $(n\to \infty)$ for all $j\in \mathbb{N}$;
			\item[(A.5)]  $\nabla\vec{v}^{n,j}\weakstarto \vec{0} $ in $L^{\infty}(\Omega)$ $ (n\to \infty)$ for all $j\in \mathbb{N}$.
		\end{itemize}
	\end{theorem}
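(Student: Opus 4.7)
The plan follows the classical Lipschitz truncation strategy adapted to the discrete setting, as originally developed in \cite{DKS13a} and carefully presented in \cite[Lem.\ 2.29]{Tscherpel_phd}. First, I would introduce a piecewise-constant surrogate for $|\nabla \vec{v}^n|$ via the patchwise averages $\mathcal{M}_{h_n} f|_K \coloneqq \langle |f|\rangle_{\omega_K}$ for every $K \in \mathcal{T}_{h_n}$. Since $\vec{v}^n \weakto \vec{0}$ in $W_0^{1,p}(\Omega)$, the sequence $\mathcal{M}_{h_n}(|\nabla \vec{v}^n|)$ is uniformly bounded in $L^p(\Omega)$; a pigeonhole argument on the $2^j$ dyadic sub-intervals of $[2^{2^j}, 2^{2^{j+1}-1}]$ then delivers, for every $j \in \mathbb{N}$, a level $\lambda_{n,j}$ in that range such that the corresponding super-level set has $L^p$-measure controlled by $c\,2^{-j/p}$. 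This immediately yields~(A.2).

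Second, I would define the bad-element family $\mathcal{T}_{h_n,j} \subseteq \mathcal{T}_{h_n}$ as the simplices where $\mathcal{M}_{h_n}(|\nabla \vec{v}^n|) > \lambda_{n,j}$, and set $B_{n,j} \coloneqq \operatorname{int}\!\bigl(\bigcup \mathcal{T}_{h_n,j}\bigr)$. To produce $\vec{v}^{n,j} \in V_{h_n}$ satisfying~(A.1) and~(A.3), I would keep all degrees of freedom of $\vec{v}^n$ attached to good nodes untouched and redefine the bad-node values by a local Scott--Zhang averaging performed within good patches only. Property~(A.1) is then immediate by construction, while~(A.3) follows by combining the local $W^{1,1}$-stability of $\Pi_h^X$ (Assumption~\ref{assum:fem:projection_x}(i)) with the uniform chunkiness condition and a local Poincar\'e inequality on the good patches, which together yield $\|\nabla \vec{v}^{n,j}\|_{\infty,K} \leq c\,\lambda_{n,j}$ on every $K \in \mathcal{T}_{h_n}$.

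Third, for the convergence statements~(A.4)--(A.5), I would fix $j \in \mathbb{N}$ and exploit the $n$-uniform bound $\|\nabla \vec{v}^{n,j}\|_\infty \leq c\,\lambda_{n,j}$. Passing to a subsequence weakly-$*$ convergent in $W_0^{1,\infty}(\Omega)$ and using that $\vec{v}^{n,j} - \vec{v}^n$ is supported on $B_{n,j}$, whose $L^p$-measure tends to $0$ as $n \to \infty$ for fixed $j$, the weak-$*$ limit is forced to coincide with the weak $W^{1,p}$-limit of $\vec{v}^n$, namely $\vec{0}$. Uniqueness of the weak-$*$ limit then promotes this identification to~(A.5), while the compact embedding $W^{1,\infty}_0(\Omega) \hookrightarrow L^\infty(\Omega)$ (via Arzel\`a--Ascoli, using the bound on $\|\nabla \vec{v}^{n,j}\|_\infty$) upgrades it to the strong convergence~(A.4).

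The main obstacle is the simultaneous enforcement of $\vec{v}^{n,j} \in V_{h_n}$ and $\|\nabla \vec{v}^{n,j}\|_\infty \leq c\,\lambda_{n,j}$: a naive pointwise truncation destroys the discrete character, while a naive interpolation into $V_{h_n}$ would destroy the $L^\infty$-control near the interface $\partial B_{n,j}$. The resolution, following \cite{DKS13a,Tscherpel_phd}, is to carefully tune the level $\lambda_{n,j}$ so that it also dominates the rescaled $L^\infty$-norm of $\vec{v}^n$ on bad patches (via a local Poincar\'e inequality absorbing $\vec{v}^n$ into its discrete gradient) and then to redefine the bad-node values through Scott--Zhang averaging against a partition of unity subordinate to $\mathcal{T}_{h_n} \setminus \mathcal{T}_{h_n,j}$. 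It is precisely here that Assumption~\ref{assum:fem:localbasis} enters, ensuring that the subsequent divergence correction needed to remain in $V_{h_n}$ stays supported in controlled patches and does not inflate the Lipschitz constant.
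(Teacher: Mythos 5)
The paper does not actually prove Theorem~\ref{thm:fem:dlt}: the appendix states the result and refers to \cite{DKS13a} and \cite[Lem.~2.29, 2.30]{Tscherpel_phd}, adding only a one-line remark that the approximability property \cite[Thm.~4.6]{DR07_interpolation} makes the present projection assumptions sufficient. You therefore supply an argument where the paper merely cites one. Your first two steps (discrete maximal function $\mathcal{M}_{h_n}$, pigeonhole over dyadic levels to produce $\lambda_{n,j}$ and (A.2), definition of $\mathcal{T}_{h_n,j}$, modification of degrees of freedom on bad nodes, and combination of Assumption~\ref{assum:fem:projection_x}(i), shape-regularity and local Poincar\'e to obtain (A.3)) are consistent with the strategy of \cite{DKS13a,Tscherpel_phd}, up to a technicality: if $\mathcal{T}_{h_n,j}$ consists only of the elements on which $\mathcal{M}_{h_n}(|\nabla\vec{v}^n|)>\lambda_{n,j}$, then changing a degree of freedom attached to a bad node also changes $\vec{v}^{n,j}$ on any good element sharing that node, and (A.1) fails; the set $B_{n,j}$ must therefore be enlarged to include the patch of every bad element, exactly as done in \cite{DKS13a}.

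The substantive gap is in your treatment of (A.4)--(A.5). You assert that the $L^p$-measure of $B_{n,j}$ tends to $0$ as $n\to\infty$ for fixed $j$, and use this to identify the weak-$*$ limit. That claim is not implied by (A.2) and is in general false. Property (A.2) only gives $\lambda_{n,j}\,|B_{n,j}|^{1/p}\le c\,2^{-j/p}$ \emph{uniformly in} $n$, i.e.\ $|B_{n,j}|\le c\,2^{-j}\lambda_{n,j}^{-p}$, which is small in $j$ but bounded away from zero as $n\to\infty$. Since the bad set is determined by level sets of (a maximal function of) $\nabla\vec{v}^n$, and $\nabla\vec{v}^n$ converges only weakly (possibly concentrating or oscillating), there is no reason for $|B_{n,j}|\to 0$ at fixed $j$. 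Consequently the uniqueness-of-weak-$*$-limits argument you propose does not close. The correct route, as in \cite{DKS13a} and \cite[Lem.~2.29]{Tscherpel_phd}, is to exploit the strong convergence $\vec{v}^n\to\vec{0}$ in $L^q(\Omega)$, $q<p^*$, provided by Rellich--Kondrachov, together with the fact that the truncation is built out of local averages of $\vec{v}^n$ on good patches, to show $\vec{v}^{n,j}\to\vec{0}$ in $L^1(\Omega)$; combined with the uniform bound $\|\nabla\vec{v}^{n,j}\|_\infty\le c\,\lambda_{n,j}$ and Arzel\`a--Ascoli, this upgrades to (A.4), and (A.5) then follows by testing $\nabla\vec{v}^{n,j}$ against $C_c^\infty$-functions, integrating by parts and using (A.4) and the $L^\infty$ bound.

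A smaller remark: Assumption~\ref{assum:fem:localbasis} is not needed for Theorem~\ref{thm:fem:dlt} itself but for the divergence-corrected version in Lemma~\ref{lem:fem:dlt}; attributing its role to ``the subsequent divergence correction'' is accurate, but placing it inside the proof of the present statement is slightly misleading.
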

	
	\begin{lemma}[Discrete divergence-corrected Lipschitz truncation] \label{lem:fem:dlt}
		Let the assumptions of Theorem~\ref{thm:fem:dlt} be satisfied and suppose, in addition, that  $\vec{v}^n \in V_{h_n, 0}$~for~all~$n\in \mathbb{N}$. Let $\tilde{\vec{v}^{n,j}} \in V_{h_n}$ be the discrete Lipschitz truncations from Theorem~\ref{thm:fem:dlt}. Then, there is a double sequence $\vec{v}^{n,j}\in V_{h_n,0}$, s.t.\vspace*{-0.5mm}
		\begin{itemize}
			\item[(A.6)]  \hypertarget{eq:fem:ltdiff}{} $\|\vec{v}^{n-j} - \tilde{\vec{v}^{n,j}}\|_{1,p} \leq c \,2^{-j/p} $ for all $ n, j \in \N$;
			\item[(A.7)]  \hypertarget{eq:fem:ltls}{} $	\vec{v}^{n,j}\to \vec{0} $ in $ L^s(\Omega)$ $( n\to \infty)$ for all $j\in \mathbb{N}$;
			\item[(A.8)]  \hypertarget{eq:fem:ltw1s}{} $	\vec{v}^{n,j}\weakto \vec{0} $ in $ W_0^{1,s}(\Omega) $ $(n \to \infty) $ for all $j\in \mathbb{N}$ and $s \in (1, \infty)$.\enlargethispage{13mm}
		\end{itemize}
	\end{lemma}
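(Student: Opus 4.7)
The plan is to construct the divergence-corrected truncations $\vec{v}^{n,j}$ by subtracting from $\tilde{\vec{v}^{n,j}}$ a discrete Bogovski\u{\i}-type corrector $\vec{w}^{n,j} \in V_{h_n}$ that restores the discrete divergence constraint lost during the truncation procedure of Theorem~\ref{thm:fem:dlt}. First, I would exploit that $\vec{v}^n \in V_{h_n, 0}$ together with (\hyperlink{eq:fem:lteq}{A.1}), which gives $\tilde{\vec{v}^{n,j}} = \vec{v}^n$ outside $B_{n,j}$, to conclude via the locally supported basis from Assumption~\ref{assum:fem:localbasis} that the discrete divergence functional $q_h \mapsto (\div \tilde{\vec{v}^{n,j}}, q_h)_\Omega$ on $Y_{h_n}$ is supported on those basis functions whose patch meets $B_{n,j}$.

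Exploiting this locality, I would construct $\vec{w}^{n,j} \in V_{h_n}$ via a patchwise discrete Bogovski\u{\i} operator (compare~\cite{DKS13a} or \cite[Ch.~2]{Tscherpel_phd}), satisfying $(\div \vec{w}^{n,j}, q_h)_\Omega = (\div \tilde{\vec{v}^{n,j}}, q_h)_\Omega$ for all $q_h \in Y_{h_n}$, supported in a controlled neighborhood $B_{n,j}^*$ of $B_{n,j}$, and admitting the continuity estimate $\|\vec{w}^{n,j}\|_{1,p}\leq c\,\|\nabla(\tilde{\vec{v}^{n,j}}-\vec{v}^n)\|_{p,B_{n,j}^*}$. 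Setting $\vec{v}^{n,j}\coloneqq \tilde{\vec{v}^{n,j}} - \vec{w}^{n,j}\in V_{h_n,0}$, assertion~(\hyperlink{eq:fem:ltdiff}{A.6}) would then follow by combining (\hyperlink{eq:fem:ltw1inf}{A.3}) (and the analogous Lipschitz bound for $\vec{v}^n$ on $B_{n,j}^*$ that is inherent to the construction of $\tilde{\vec{v}^{n,j}}$) with (\hyperlink{eq:fem:ltb}{A.2}) to obtain $\|\vec{w}^{n,j}\|_{1,p}\leq c\,\lambda_{n,j}|B_{n,j}^*|^{1/p}\leq c\,2^{-j/p}$.

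For the convergence properties (\hyperlink{eq:fem:ltls}{A.7}) and (\hyperlink{eq:fem:ltw1s}{A.8}) as $n\to\infty$ for fixed $j\in \mathbb{N}$, I would split $\vec{v}^{n,j} = \tilde{\vec{v}^{n,j}} - \vec{w}^{n,j}$. The summand $\tilde{\vec{v}^{n,j}}$ tends to $\vec{0}$ in $L^\infty(\Omega)$ by (A.4), and its gradient converges weakly-$\ast$ to $\vec{0}$ in $L^\infty(\Omega)$ by (A.5), which together imply convergence to $\vec{0}$ in $L^s(\Omega)$ and weakly in $W_0^{1,s}(\Omega)$ for every $s\in (1,\infty)$. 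For the corrector $\vec{w}^{n,j}$, the patchwise structure of the discrete Bogovski\u{\i} operator yields a local $L^\infty$-bound $\|\vec{w}^{n,j}\|_\infty\leq c\,h_n\lambda_{n,j}\to 0$ $(n\to \infty)$, giving (\hyperlink{eq:fem:ltls}{A.7}); combining this $L^s$-convergence to $\vec{0}$ with the uniform $W^{1,s}$-boundedness of $(\vec{w}^{n,j})_{n\in\mathbb{N}}$ (stemming from the continuity of the discrete Bogovski\u{\i} operator on the whole $L^q$-scale) forces any weak $W_0^{1,s}$-limit to vanish, which implies (\hyperlink{eq:fem:ltw1s}{A.8}).

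The main obstacle is the explicit construction of the patchwise discrete Bogovski\u{\i} corrector that simultaneously preserves the zero boundary values (so that $\vec{v}^{n,j}\in V_{h_n,0}$), localizes precisely near the bad set $B_{n,j}$ (which is what converts (\hyperlink{eq:fem:ltb}{A.2}) into the sharp $2^{-j/p}$-decay in (\hyperlink{eq:fem:ltdiff}{A.6})), and remains continuous on the whole $L^q$-scale (which is needed to obtain (\hyperlink{eq:fem:ltw1s}{A.8}) for arbitrary $s\in(1,\infty)$). This construction depends crucially on Assumption~\ref{assum:fem:localbasis} and on the Fortin-type projection from Assumption~\ref{assum:fem:projection_x}(iii), which together transport the continuous Bogovski\u{\i} machinery to the discrete setting.
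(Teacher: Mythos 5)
The paper does not prove this lemma itself; it is recorded in the Appendix as a result recalled from \cite{DKS13a} and \cite[Lem.~2.29, 2.30]{Tscherpel_phd}, accompanied only by the remark that Assumptions~\ref{assum:fem:projection_x}, \ref{assum:fem:projection_y} together with \cite[Thm.~4.6]{DR07_interpolation} make those references applicable. Your construction --- localizing the discrete divergence defect of $\tilde{\vec{v}^{n,j}}$ near $B_{n,j}$ via Assumption~\ref{assum:fem:localbasis}, repairing it with a patchwise Bogovski\u{\i}-type corrector transported to $V_{h_n}$ through the Fortin operator of Assumption~\ref{assum:fem:projection_x}(iii), and then reading off (\hyperlink{eq:fem:ltdiff}{A.6})--(\hyperlink{eq:fem:ltw1s}{A.8}) from (A.1)--(A.5) together with local inverse estimates --- is precisely the argument developed in those cited sources, so it is essentially the same approach the paper relies on.
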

	
	\section*{Acknowledgments}
	Alex Kaltenbach acknowledges support from the Deutsche Forschungsgemeinschaft  (DFG, German Research
	Foundation) ---  within the Walter--Benjamin-Program (project number: 525389262) as well as the University of Pisa for their hospitality.
	

\end{document}